\documentclass{imsart}
\RequirePackage[colorlinks,citecolor=blue,urlcolor=blue]{hyperref}


\startlocaldefs
\usepackage{graphicx, verbatim}

\usepackage{amsfonts}
\usepackage{amssymb}
\usepackage{amstext}
\usepackage{amsmath}
\usepackage{amsthm,verbatim}
\usepackage{xspace}
\usepackage{graphicx}
\usepackage{algorithm}

\newtheorem{theorem}{Theorem}
\newtheorem{lemma}[theorem]{Lemma}

\newcommand{\E}{\ensuremath{\mathbb E}}
\newcommand{\R}{\ensuremath{\mathbb R}}

\newcommand{\F}{\ensuremath{\mathcal F}}

\newcommand{\lab}{\label}  \newcommand{\ra}{\ensuremath{\rightarrow}}  \def\a{{\mathbf{\alpha}}}  \def\De{{{\Delta}}}  
 \def\var{{\mathrm{var}}} \def\beq{\begin{eqnarray}} \def\eeq{\end{eqnarray}} \def\ben{\begin{enumerate}}
\def\een{\end{enumerate}} \def\bit{\begin{itemize}}
\def\bel{\begin{lemma}}
\def\eel{\end{lemma}}
\def\eit{\end{itemize}} \def\beqs{\begin{eqnarray*}} \def\eeqs{\end{eqnarray*}} \def\bel{\begin{lemma}} \def\eel{\end{lemma}}
 \newcommand{\Z}{\mathbb{Z}}  \newcommand{\C}{\mathcal{C}} 
\newcommand{\T}{\mathbb{T}}      \renewcommand{\b}{\mathbf{b}} 
\newcommand{\tb}{\tilde{\mathbf{b}}}
  \newcommand{\II}{\mathcal{I}}  \newcommand{\p}{\mathbb{P}}
   \newcommand{\e}{\mathbf{e}} 
\newcommand{\LL}{\Delta}  \newcommand{\la}{\lambda}  
   \def\eps{{\epsilon}}  \def\ie{i.\,e.\,} 
\def\vol{\mathrm{vol}}
\newcommand{\tP}{\tilde{P}}

\newcommand{\wn}{w^{(n)}}
\newcommand{\wone}{w^{(n_1)}}

\newcommand{\La}{\Lambda}

\renewcommand{\L}{\mathbb{L}} 
\renewcommand{\i}{k}
\renewcommand{\j}{\ell}
\newcommand{\ubn}{\hat{u}^{(n_1)}_b}

\vfuzz2pt 
\hfuzz2pt 
\newtheorem{thm}{Theorem}[section]
\newtheorem{cor}[thm]{Corollary}
\newtheorem{lem}[thm]{Lemma}

\newtheorem{claim}[thm]{Claim}
\theoremstyle{definition}
\newtheorem{defn}[thm]{Definition}
\theoremstyle{remark}

\numberwithin{equation}{section}

\endlocaldefs
\begin{document}
\begin{frontmatter}

\title{Random concave functions on an equilateral lattice with periodic Hessians I: entropy and Laplacians}
\runtitle{Random concave functions}

\author{\fnms{Hariharan} \snm{Narayanan}\ead[label=e1]{hariharan.narayanan@tifr.res.in}}
\address{
School of Technology and Computer Science, \\
Tata Institute for Fundamental Research, \\
Mumbai 400005, India.\\ \printead{e1}}

\begin{abstract}
 We show that a random concave function having a periodic hessian on an equilateral lattice has a quadratic scaling limit, if the average hessian of the function satisfies certain conditions.
We consider the set of all concave functions $g$ on an equilateral lattice $\mathbb L$ that when shifted by an element of $n \mathbb L$, incur addition by a  linear function (this condition is equivalent to the periodicity of the hessian of $g$). We identify the functions in this set, up to equality of the hessians, with a convex polytope $P_n(s)$, where $s$ corresponds to the average hessian. We show that 
the $\ell_\infty$ diameter of $P_n(s)$ is bounded below by $c(s) n^2$, where $c(s)$ is a positive constant depending only on $s$. 
Our main result is that, for any $\eps_0 > 0$, the normalized Lebesgue measure of all points in $P_n(s)$ that are not contained in a $n^2$ dimensional cube $Q$ of sidelength $2 \eps_0n^2$, centered at the unique (up to addition of a linear term) quadratic polynomial with hessian $s$, tends to $0$ as $n$ tends to $\infty$.
\end{abstract}

\begin{keyword}[class=MSC]
\kwd[Primary ]{60K35}
\kwd{60K35}
\kwd[; secondary ]{60K35}
\end{keyword}

\end{frontmatter}

%
%
\tableofcontents

\section{Introduction}\lab{sec:intro}

\subsection{Littlewood-Richardson coefficients}
Littlewood-Richardson coefficients play an important role in the representation theory of  the general linear groups. Among other interpretations, they count the number of tilings of certain domains using squares and equilateral triangles  \cite{squaretri}. 
Let $\la, \mu, \nu$ be vectors in $\Z^n$ whose entries are non-increasing non-negative integers. Let the $\ell_1$ norm of a vector $\a \in \R^n$ be denoted $|\a|$ and let $$|\la| + |\mu| = |\nu|.$$ Take an equilateral triangle $\Delta$ of side $1$. Tessellate it with unit equilateral triangles of side $1/n$. Assign boundary values to $\Delta$ as in Figure 1; Clockwise, assign the values $0, \la_1, \la_1 + \la_2, \dots, |\la|, |\la| + \mu_1, \dots, |\la| + |\mu|.$ Then anticlockwise, on the horizontal side, assign  $$0, \nu_1, \nu_1 + \nu_2, \dots, |\nu|.$$

Knutson and Tao defined this hive model for Littlewood-Richardson coefficients in \cite{KT1}. They showed that the Littlewood-Richardson coefficient
$c_{\la\mu}^\nu$ is given by the number of ways of assigning integer values to the interior nodes of the triangle, such that the piecewise linear extension to the interior of $\Delta$ is a concave function $f$ from $\Delta$ to $\R$.  
Such  an integral ``hive" $f$ can be described as an integer point in a certain polytope known as a hive polytope. The volumes of these polytopes shed light on the asymptotics of Littlewood-Richardson coefficients \cite{Nar, Okounkov, Greta}. Additionally, 
they appear in certain calculations  in free probability \cite{KT2, Zuber}. 
\begin{figure}\label{fig:tri}
\begin{center}
\includegraphics[scale=0.25]{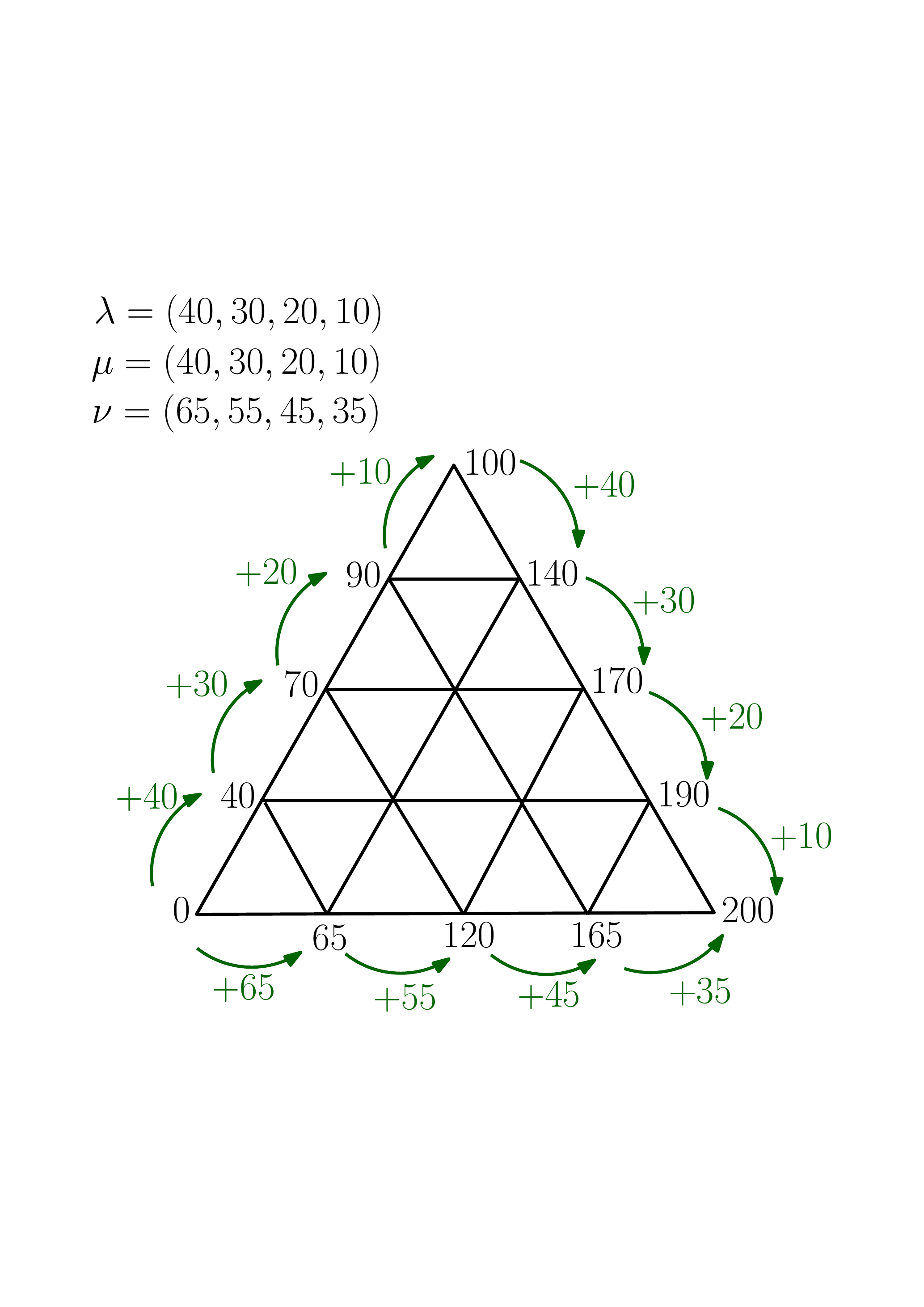}
\caption{Hive model for Littlewood-Richardson coefficients}
\end{center}
\end{figure}

\begin{figure}\label{fig:tri3}
\begin{center}
\includegraphics[scale=0.25]{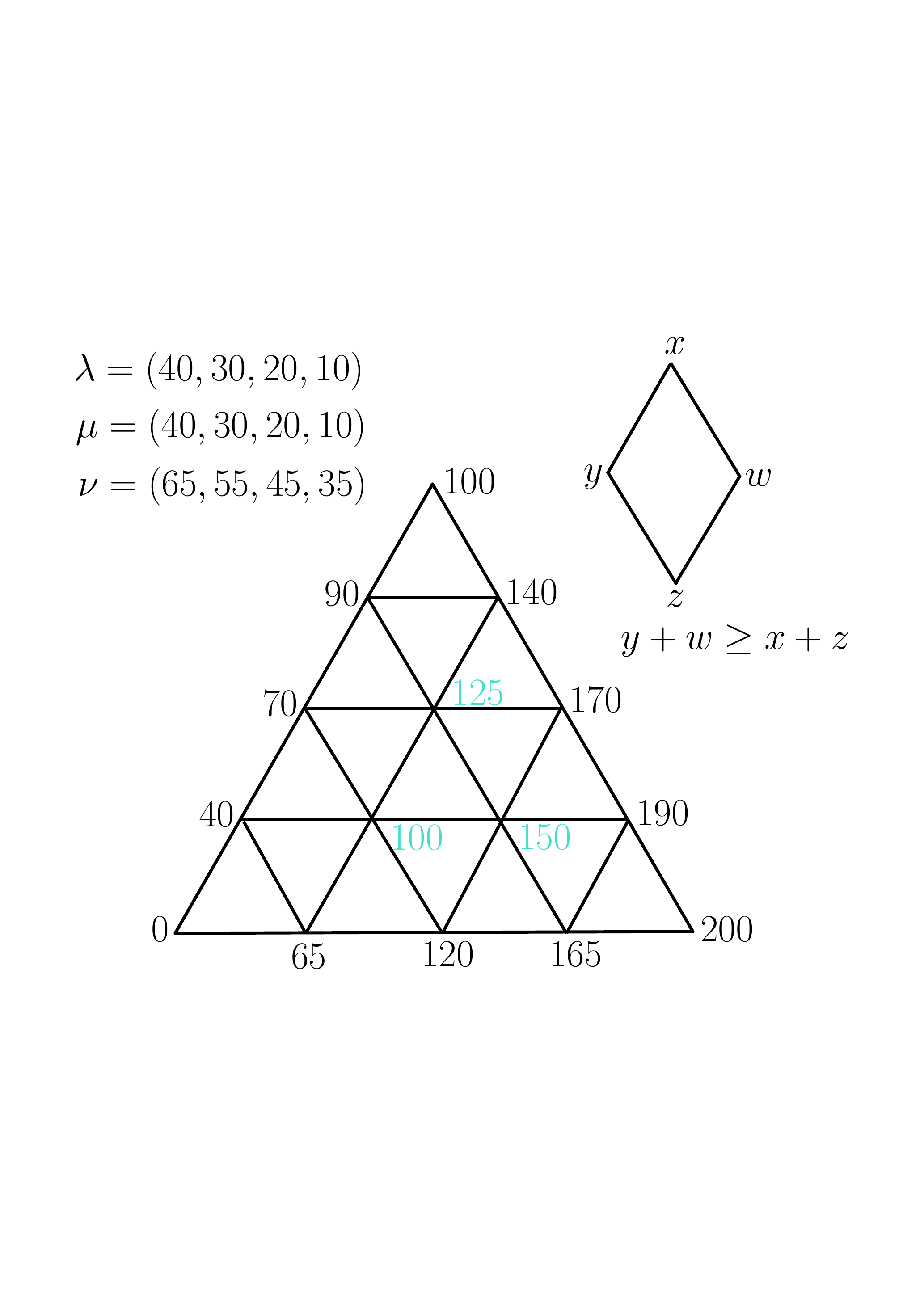}
\caption{Values taken at interior vertices in the hive model}
\end{center}
\end{figure}

The question of studying the structure of a typical real hive in a hive polytope, sampled from the Lebesgue measure is closely linked to the question of evaluating the asymptotic value of a Littlewood-Richardson coefficient for $GL_n(\mathbb C)$ as $n \ra \infty$ and $\la, \mu$ and $\nu$ tend to continuous monotonically decreasing functions in a certain fashion.
In order to study the scaling limits of random surfaces \cite{Scott}, it has proven beneficial to first examine the situation with periodic boundary conditions \cite{CohnKenyonPropp}. This is our goal in the present paper.

\subsection{Overview}
 We show that a random concave function having a periodic hessian on an equilateral lattice has a quadratic scaling limit, if the average hessian of the function satisfies certain conditions.
We consider the set of all concave functions $g$ on an equilateral lattice $\mathbb L$ that when shifted by an element of $n \mathbb L$, incur addition by a  linear function (this condition is equivalent to the periodicity of the hessian of $g$). We identify this set, up to equality of the hessians, with a convex polytope $P_n(s)$, where $s$ corresponds to the average hessian. We show in Lemma~\ref{lem:diameter} that 
the $\ell_\infty$ diameter of $P_n(s)$ is bounded below by $c(s) n^2$, where $c(s)$ is a positive constant depending only on $s$. 
We show in our main result, Theorem~\ref{thm:main} that the normalized Lebesgue measure of all points in $P_n(s)$ that are not contained in a $n^2$ dimensional cube $Q$ of sidelength $2 \eps_0 n^2$, centered at the unique (up to addition of a linear term) quadratic polynomial with hessian $s$ for any $\eps_0 > 0$, tends to $0$ as $n$ tends to $\infty$. In our proof, we construct a family $\F$ consisting of a finite number polytopes that cover $P_n(s)$, and have the following property.
For every point $x \in P_n(s)$ that is not contained in $Q$, there is a polytope $P$ in $\F$ such that $x \in P$ and $|\F|$ times the normalized volume of $P$ is bounded above by $o(1)$ as $n \ra \infty$. A large portion of this paper is devoted to establishing that the above property is true for $\F$ when $s$ belongs to a certain subset of the set of possible hessians. The key tool used for proving upper bounds on the volumes of polytopes in $\F$ is an anisotropic isoperimetric inequality (see Subsection~\ref{ssec:appl_isop}).

\subsection{Preliminaries}\lab{sec:prelim}

We consider the equilateral triangular lattice $\mathbb L$, \ie the subset of $\mathbb{C}$ generated by $1$ and $\omega = e^{\frac{2\pi \imath}{3}}$ by integer linear combinations. 
We define the edges  $E(\mathbb L )$ to be  the lattice rhombi of side $1$ in $\mathbb L$. 
We consider a rhombus  $R_n$ with vertices $0$, $n$, $n (1 - \omega^2)$ and $-n\omega^2$.  Let $\T_n$ be the torus obtained from $R_n$  by identifying opposite sides together. We define the edges  $E(\T_n)$ to be  the lattice rhombi of side $1$ in $\T_n$, where each vertex in $V(\T_n)$ is  an equivalence class of $\mathbb{L}$ modulo  $n\mathbb{L}:= n\Z   + n \omega\Z$. 

\begin{defn}[Discrete hessian]
Let $f:\mathbb{L} \ra \R$ be a function defined on $\mathbb L$.
We define the (discrete) hessian $\nabla^2(f)$ to be a  function from the set $E(\T_n)$ of rhombi of the form $\{a, b, c, d\}$ of side $1$ (where the order is anticlockwise, and the angle at $a$ is $\pi/3$) on the discrete torus to the reals, satisfying 
$$\nabla^2 f(\{a,b,c,d\}) =- f(a) + f(b) - f(c) + f(d).$$  
\end{defn}

Let $f$ be a function defined on $\mathbb{L}$ such that $\nabla^2(f)$ is periodic modulo $n\mathbb{L}$ and the piecewise linear extension of $f$ to $\mathbb C$ is concave. Such a function $f$ will be termed concave on $\L$, or simply concave. Then $\nabla^2(f)$ may be viewed as a function $g$ from $E(\T_n)$ to $\R$.

Let $a, b, c$ and $d$ be the vertices of a lattice rhombus of $ \mathbb{L}$,  of side $1$ such that  \beq\lab{eq:1.3}  a - d = -z\omega^2, \eeq \beq b-a = z,\eeq \beq c-b = -z \omega^2 ,\eeq  \beq\lab{eq:1.6}  d-c = -z,\eeq for some $z \in \{1, \omega, \omega^2\}.$ In the respective cases when $z= 1, \omega$ or $\omega^2$, we define corresponding sets of lattice rhombi of side $1$ to be $E_0(\mathbb L)$, $E_1(\mathbb L)$ or $E_2(\mathbb L)$. Note that $a$ and $c$ are vertices at which the angle is $\frac{\pi}{3}$. 
For $i = 0, 1$ and $2$, we define $E_i(\T_n)$ analogously.
For $s_0, s_1, s_2 > 0$ and $f:V(\T_n) \ra \R$, we say that $g = \nabla^2(f)$ satisfies $g \preccurlyeq s = (s_0, s_1, s_2)$, if  for all $a, b, c$ and $d$ satisfying (\ref{eq:1.3}) to (\ref{eq:1.6}) and $e = \{a, b, c, d\}$, $g$ satisfies

\ben 
\item  $g(e) \leq  s_0,$ if $e \in E_0(\T_n)$, i.e. $z = 1$.
\item $g(e)  \leq  s_1,$  if $e \in E_1(\T_n)$, i.e.  $z = \omega$.
\item $g(e) \leq  s_2,$ if  $e \in E_2(\T_n)$ i.e. $z = \omega^2$.
\een

We will further assume that $2 = s_0 \leq s_1 \leq s_2.$  Given  $s = (s_0, s_1, s_2)\in \R_+^3,$  let $P_n(s)$ be the bounded polytope of  all functions $x:V(\T_n) \ra \R$ such that $\sum_{v \in V(\T_n)} x(v) = 0$ and $\nabla^2(x)\preccurlyeq s$.

\begin{defn}Let $\tP_n(s)$ be defined to be the following image of $P_n(s)$ under an affine transformation.  Given  $s = (s_0, s_1, s_2)\in \R_+^3,$  let $\tP_n(s)$ be the bounded polytope of  all functions $x:V(\T_n) \ra \R$ such that $x(0) = 0$ and $\nabla^2(x)\preccurlyeq s$. 
\end{defn}
We observe that the $n^2-1$ dimensional Lebesgue measures of $\tP_n(s)$ and  $P_n(s)$ satisfy
$$|\tP_n(s)|^{1/n^2}\left(1 - \frac{C\log n}{n}\right) \leq |P_n(s)|^{1/n^2} \leq |\tP_n(s)|^{1/n^2}\left(1 + \frac{C\log n}{n}\right).$$

\begin{lem}\lab{lem:2.3}
For any such $s$, there is a unique quadratic function $q(s)$ from $\mathbb L$ to $\R$ such that $\nabla^2q$ satisfies the following.

\ben 
\item  $\nabla^2q(e) = - s_0,$ if $e \in E_0(\mathbb L)$.
\item $\nabla^2q(e)  =  - s_1,$  if $e \in E_1(\mathbb L )$.
\item $\nabla^2q(e)  =  - s_2,$ if  $e \in E_2(\mathbb L)$.
\item $q(0) = q(n) = q(n\omega) = 0$.
\een
\end{lem}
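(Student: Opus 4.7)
The plan is to parameterize a general real quadratic on $\mathbb{C} \cong \mathbb{R}^2$ as $q(z) = Q(z,z) + L(z) + c$, where $Q : \mathbb{R}^2 \times \mathbb{R}^2 \to \mathbb{R}$ is a symmetric bilinear form, $L$ a real-linear functional, and $c \in \mathbb{R}$, and then match parameters to conditions. The space of such $q$ is $6$-dimensional, which matches the $3+3$ conditions to impose (three hessian values, three vanishing conditions). The argument then splits the system: the three hessian conditions constrain only $Q$, while the three boundary conditions pin down $(L, c)$.

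First, for any rhombus $\{a, a+u, a+u+v, a+v\}$, direct expansion of $-q(a)+q(a+u)-q(a+u+v)+q(a+v)$ collapses the terms involving $a$, $L$, and $c$, leaving $\nabla^2 q = -2\,Q(u,v)$. In particular, the hessian of $q$ depends only on the quadratic part $Q$, and the three conditions in the statement become three linear equations on the $3$-dimensional space of symmetric bilinear forms on $\mathbb{R}^2$.

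Second, I would verify that this $3 \times 3$ system is non-singular. Writing $A = Q(1,1)$, $B = Q(\omega,\omega)$, $C = Q(1,\omega)$ and using $\omega^2 = -1 - \omega$, each of the three hessian values becomes a linear combination of $(A,B,C)$. Expanding $Q(1,\omega^2) = -A - C$ and $Q(\omega,\omega^2) = -B - C$, and observing that $Q(1,\omega) = C$ already isolates $C$, one obtains a triangular system that solves uniquely to give $C = -s_1/2$, $A = (s_0 + s_1)/2$, $B = (s_1 + s_2)/2$ (up to the sign convention determining the rhombus sides). Hence $Q$ is uniquely determined.

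Third, the boundary conditions fix the remaining pieces: $q(0) = 0$ gives $c = 0$; then $q(n) = n^2 A + n L(1) = 0$ and $q(n\omega) = n^2 B + n L(\omega) = 0$ force $L(1) = -nA$ and $L(\omega) = -nB$, which determines the real-linear $L$ uniquely since $\{1, \omega\}$ is a basis of $\mathbb{R}^2$. This yields both existence and uniqueness. The only substantive step is the linear-algebra check that the $3\times 3$ hessian system is invertible, which reduces to the elementary identity $1+\omega+\omega^2 = 0$; the rest is bookkeeping.
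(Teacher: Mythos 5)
Your proof is correct and complete. It is in the same spirit as the paper's (construct $q$ by linear algebra, using that the hessian is a linear function of the quadratic part), but it is more explicit. The paper's proof is a one-line sketch: construct $q$ for $s = (1,0,0),(0,1,0),(0,0,1)$, noting these are $120^\circ$-rotations of the same function, and superpose. That exploits the $\Z/3$ rotation symmetry of $\L$ to avoid writing out a linear system. Your version instead parameterizes $q$ by a symmetric bilinear form $Q$, a linear form $L$, and a constant $c$, verifies the clean identity $\nabla^2 q(\{a,a+u,a+u+v,a+v\}) = -2Q(u,v)$ (which shows translation invariance and that only $Q$ matters for the hessian), and then solves the resulting triangular $3\times 3$ system using $1+\omega+\omega^2 = 0$, followed by fixing $(L,c)$ from the three vanishing conditions. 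The payoff of your route is that it proves uniqueness explicitly, which the paper's sketch leaves implicit; the payoff of the paper's route is brevity and that it makes the rotational symmetry of the construction visible. Both are valid; yours is more self-contained and would serve a reader better who wants to see where the formulas $A=(s_0+s_1)/2$, $B=(s_1+s_2)/2$, $C=-s_1/2$ come from. One small caveat you already flag: the exact signs of $A,B,C$ depend on the orientation conventions for the rhombi in $E_0,E_1,E_2$; it is worth double-checking that the conventions in equations (1.3)--(1.6) of the paper produce the signs you state, but since the $3\times 3$ system is manifestly nonsingular for any consistent convention, this does not affect existence or uniqueness.
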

\begin{proof}
This can be seen by explicitly constructing $q(s)$ when $s = (1, 0, 0)$, $(0, 1, 0)$ and $(0,0,1)$ (which are rotations of the same concave function) and combining these by linear combination. Given a concave function $f:\mathbb L \ra \R$ such that 
$\nabla^2 f$ is invariant under translation by elements of $n \mathbb L$, and the average value of $\nabla^2 f$ on edges  in $E_i(\L)$ (which is well defined due to periodicity) is equal to $-s_i$ ,  and $f(0) = f(n) = f(n\omega) = 0$, we consider $(f - q)(s)$. Since the average value of $\nabla^2 f - \nabla^2 q$ is $0$, this implies that $f - q$ is $0$ on $n\L$, and more generally, is invariant under translations in $n\L$. We can therefore view $f - q$ to be a function from $\T_n = \L/{n\L}$ to $\R$, 
and in fact the resulting function is in $\tP_n(s)$. Conversely, any point in $\tP_n(s)$ can be extended to a periodic function on $\L$, to which we can add $q(s)$ and thereby recover a function $f$ on $\L$ that is concave, such that
$\nabla^2 f$ is invariant under translation by elements of $n \mathbb L$, the average value of $\nabla^2 f$ on $E_i(\L)$ is $-s_i$ ,  and $f(0) = f(n) = f(n\omega) = 0$.
\end{proof}

\underline{Note on constants:} We will use $c$ to denote an absolute positive constant that is less or equal to $1$, and $C$ to denote an absolute constant that is greater or equal to $1$. The precise values of these constants may vary from occurrence to occurrence.

\subsubsection{Convex geometry}
Let $1 \leq \ell \in \Z$. Given sets $K_i\subseteq \R^m$ for $i \in [\ell]$, let their Minkowski sum $\{x_1 + \dots + x_\ell \big| \forall i \in [\ell], x_i \in K_i\},$ be denoted by $K_1 + \dots + K_\ell.$

Let $K$ and $L$ be compact convex subsets of $\R^m$. 

Then, the Brunn-Minkowski inequality \cite{Brunn, Minkowski} states that \beq |K + L|^\frac{1}{m} \geq |K|^\frac{1}{m} + |L|^\frac{1}{m}.\eeq 
It can be shown that 
$$\lim_{\eps \ra 0^+} \frac{|L + \eps K| - |L|}{\eps}$$ exists. We will call this the anisotropic surface area $S_K(L)$ of $L$ with respect to $K$.

 Dinghas \cite{Dinghas, Figalli} showed that the following anisotropic isoperimetric inequality can be derived from the Brunn-Minkowski inequality.
\beq\lab{eq:2.2} S_K(L) \geq m|K|^{\frac{1}{m}} |L|^{\frac{m-1}{m}}.\eeq 

We shall need the following result of Pr\'{e}kopa (\cite{prekopa}, Theorem 6).
\begin{thm}\lab{thm:prekopa}
Let $f(x, y)$ be a function of $\R^n \oplus \R^m$ where $x \in \R^n$ and 
and $y \in \R^m$. Suppose that $f$ is logconcave in $\R^{n+m}$ and let
$A$ be a convex subset of $\R^m$. Then the function of the variable x:
$$\int_A f(x, y) dy$$
is logconcave in the entire space $\R^n$.
\end{thm}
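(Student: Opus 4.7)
The plan is to reduce the assertion to the Pr\'ekopa--Leindler functional inequality, which is the functional form of the Brunn--Minkowski inequality already quoted in the paper. Recall that Pr\'ekopa--Leindler states: if $h_0, h_1, h_\lambda : \R^d \to [0, \infty)$ are measurable, $\lambda \in (0,1)$, and
\[
 h_\lambda(\lambda u + (1-\lambda) v) \;\geq\; h_0(u)^\lambda h_1(v)^{1-\lambda}
\qquad \text{for all } u, v \in \R^d,
\]
then $\int h_\lambda \geq \bigl(\int h_0\bigr)^\lambda \bigl(\int h_1\bigr)^{1-\lambda}$. This can be derived from Brunn--Minkowski by a layer-cake argument on the superlevel sets $\{h_i \geq t\}$ together with a one-dimensional base case handled by a simple rearrangement on the real line; I would either quote this derivation or carry it out as a lemma before the main step.

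The main step is then a direct verification of log-concavity. Fix $x_0, x_1 \in \R^n$, $\lambda \in (0,1)$, and set $x_\lambda := \lambda x_0 + (1-\lambda) x_1$. Define the three $y$-slices cut off by the convex set $A$:
\[
 h_i(y) \;:=\; f(x_i, y)\, \mathbf{1}_A(y), \quad i \in \{0, 1, \lambda\}.
\]
For arbitrary $y_0, y_1 \in \R^m$ with $y_\lambda := \lambda y_0 + (1-\lambda) y_1$, there are two cases. If either $y_0 \notin A$ or $y_1 \notin A$, then $h_0(y_0)^\lambda h_1(y_1)^{1-\lambda} = 0$ and the hypothesis of Pr\'ekopa--Leindler is vacuous. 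Otherwise $y_\lambda \in A$ by convexity of $A$, and log-concavity of $f$ on $\R^{n+m}$ applied to the points $(x_0, y_0)$ and $(x_1, y_1)$ yields
\[
 f(x_\lambda, y_\lambda) \;\geq\; f(x_0, y_0)^\lambda f(x_1, y_1)^{1-\lambda},
\]
i.e.\ $h_\lambda(y_\lambda) \geq h_0(y_0)^\lambda h_1(y_1)^{1-\lambda}$. Thus the Pr\'ekopa--Leindler hypothesis holds for $(h_0, h_1, h_\lambda)$.

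Applying Pr\'ekopa--Leindler in dimension $d = m$ then gives
\[
 \int_A f(x_\lambda, y)\, dy \;\geq\; \Bigl(\int_A f(x_0, y)\, dy\Bigr)^\lambda \Bigl(\int_A f(x_1, y)\, dy\Bigr)^{1-\lambda},
\]
which is precisely the desired log-concavity of $x \mapsto \int_A f(x, y)\, dy$. The only real obstacle is a foundational one, namely establishing Pr\'ekopa--Leindler from Brunn--Minkowski; one must be careful that the integrals on each side are well-defined (possibly infinite, in which case log-concavity still makes sense with the convention $0^\lambda \cdot \infty^{1-\lambda} = \infty$ on the appropriate side) and that $h_0, h_1, h_\lambda$ are measurable, which follows from measurability of $f$ and the indicator of the convex set $A$. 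After that, the main step above is purely mechanical and the convexity of $A$ enters in exactly one line.
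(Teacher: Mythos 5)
Your proof is correct. Note, however, that the paper does not actually prove this statement: it cites it directly as Theorem~6 of Pr\'ekopa's 1973 paper, so there is no in-paper argument to compare against. Your reduction to Pr\'ekopa--Leindler is the now-standard route, and the details you give are right: restricting $f$ to the slices $h_i(y) = f(x_i,y)\mathbf{1}_A(y)$ and checking the three-function hypothesis, with convexity of $A$ used exactly once to guarantee $y_\lambda \in A$ when $y_0, y_1 \in A$, is precisely how this implication is usually organized. (Pr\'ekopa's original 1973 argument is somewhat different in spirit --- it proceeds more directly via Brunn--Minkowski on level sets and a transportation/rearrangement step, without isolating Pr\'ekopa--Leindler as an intermediate --- but the modern Pr\'ekopa--Leindler route you take is cleaner and is the one a reader today would expect.) The one foundational debt you correctly flag, deriving Pr\'ekopler--Leindler from Brunn--Minkowski, is genuine but standard; since the paper already invokes Brunn--Minkowski, leaning on it is consistent with the surrounding exposition. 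One small hygiene point: you should say a word about why $\int_A f(x,y)\,dy$ is finite (or at least in $[0,\infty]$ with the right conventions) for the log-concavity statement to be nonvacuous, but since in the paper's application $f$ is the indicator of a bounded polytope this is automatic.
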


We denote the $k-$dimensional Lebesgue measure of a $k-$dimensional polytope $P$ by $|P|$. We will need to show that $|P_m(s)|^{1/m^2}$ is less than $(1 + o_m(1))|P_n(s)|^{\frac{1}{n^2}},$ for $ n \geq m$. We achieve this by conditioning on a ``double layer boundary" and the use of the Brunn-Minkowski inequality.
We will identify $\Z + \Z\omega$ with $\Z^2$ by mapping $x + \omega y$, for $x, y \in \Z$ onto $(x, y) \in \Z^2.$

Given $n_1|n_2$, the natural map from $\Z^2$ to $\Z^2/(n_1 \Z^2) = \T_{n_1}$ factors through $\Z^2/(n_2 \Z^2) =\T_{n_2}$. We denote the respective resulting maps from $\T_{n_2}$ to $\T_{n_1}$ by $\phi_{n_2, n_1}$, from $\Z^2$ to $\T_{n_2}$ by $\phi_{0, n_2}$ and from $\Z^2$ to $\T_{n_1}$ by $\phi_{0, n_1}$.
Given a set of boundary nodes $\b \subseteq V(\T_n)$, and $ x \in \R^{\b}$, we define $Q_{ \b}(x)$ to be the fiber polytope over $x$, that arises from the projection map $\Pi_{\b}$ of $\tP_n(s)$ onto $\R^{\b}.$ Note that $Q_{\b}(x)$ implicitly depends on $s$.

Let $\{0\} \subseteq \b_1 \neq \{0\},$ be a subset of $V(\T_{n_1}).$ 
 Given any vertex $v_1$ in $\b_1$ other than $0$, there is a lattice path $\mathrm{path}(v_1)$ (i.e. a path $0= a_1, \dots, a_k = v_0$, where each $a_i - a_{i-1}$ is in the set $\{1, 1 + \omega, \omega, -1, \omega^2, 1 - \omega^2\}$) that goes from $0$ to some vertex $v_0 \in \phi^{-1}_{0,n_1}(v_1)$ that consists of two straight line segments, the first being from $0$ to some point in $\Z^+$, and the second having the direction $1 + \omega$. It is clear that this $v_0$ can be chosen to have absolute value at most $2n_1$ by taking an appropriate representative of $\phi^{-1}_{0,n_1}(v_1).$
We see that $[0,1]^{\b_1\setminus\{0\}} \subseteq \Pi_{\b_1} \tP_{n-1}(s) \subseteq \R^{\b_1\setminus \{0\}}.$ Let $f_1 \in  \tP_{n_1}(s)$. Along $\mathrm{path}(v_1)$, at each step, the slope of $f$ increases by no more than a constant, due to the condition $\nabla^2(f_1) \preccurlyeq s.$ This implies that $f_1$ is $Cn_1$ Lipschitz. Therefore, $\|f_1\|_{\ell_\infty}$ is at most $Cn_1^2.$ Thus $\Pi_{\b_1} \tP_{n_1}(s)$ is contained inside a $|\b_1| -1$ dimensional cube of side length no more than $Cn_1^2.$ 
We have thus proved the following.

\begin{lem}\lab{lem:3-}
Let $\{0\} \subseteq \b_1 \neq \{0\},$ be a subset of $V(\T_{n_1}).$ Then,
$$0 \leq \ln |\Pi_{\b_1} \tP_{n_1}(s)| \leq (|\b_1| -1)\ln (Cn_1^2).$$
\end{lem}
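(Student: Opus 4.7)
The plan is to verify both inequalities by elaborating the argument sketched in the paragraph immediately preceding the lemma statement. First, since every $f \in \tP_{n_1}(s)$ satisfies $f(0) = 0$ and $0 \in \b_1$, the image $\Pi_{\b_1}\tP_{n_1}(s)$ lies in the coordinate hyperplane $\{y_0 = 0\} \subset \R^{\b_1}$, so its $(|\b_1|-1)$-dimensional Lebesgue measure equals the Lebesgue measure of its image under the isometric forgetful map to $\R^{\b_1 \setminus \{0\}}$. Both bounds are then proved in this $(|\b_1|-1)$-dimensional ambient space.

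For the lower bound I would exhibit $[0,1]^{\b_1 \setminus \{0\}}$ inside $\Pi_{\b_1}\tP_{n_1}(s)$ explicitly. Given any $(y_v)_{v \in \b_1 \setminus \{0\}} \in [0,1]^{\b_1 \setminus \{0\}}$, define $f : V(\T_{n_1}) \to \R$ by $f(v) = y_v$ for $v \in \b_1 \setminus \{0\}$ and $f(v) = 0$ for every other vertex (in particular $f(0) = 0$). All values of $f$ lie in $[0,1]$, so for any lattice rhombus $\{a,b,c,d\}$,
\[
\nabla^2 f(\{a,b,c,d\}) = -f(a) + f(b) - f(c) + f(d) \in [-2,2].
\]
Since $2 = s_0 \leq s_1 \leq s_2$ by hypothesis, this gives $\nabla^2 f \preccurlyeq s$, so $f \in \tP_{n_1}(s)$. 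Hence $[0,1]^{\b_1 \setminus \{0\}} \subseteq \Pi_{\b_1}\tP_{n_1}(s)$ and $\ln|\Pi_{\b_1}\tP_{n_1}(s)| \geq 0$.

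For the upper bound I would follow the lattice-path argument from the preceding paragraph, which shows that every $f_1 \in \tP_{n_1}(s)$ satisfies $\|f_1\|_\infty \leq C n_1^2$. For each $v_1 \in \b_1$, take the representative $v_0 \in \Z + \omega\Z$ of $\phi_{0,n_1}^{-1}(v_1)$ with $|v_0| \leq 2n_1$ and the two-segment path $\mathrm{path}(v_1)$ from $0$ to $v_0$ of length $O(n_1)$. Along $\mathrm{path}(v_1)$, the inequality $\nabla^2 f_1 \preccurlyeq s$ controls how much the slope along successive lattice edges can grow from one to the next; combined with the closing-up constraint forced by periodicity on $\T_{n_1}$ (slopes around any lattice loop sum to zero), this upgrades to a two-sided bound of order $O(n_1 \max_i s_i)$ on each individual slope. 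Summing these $O(n_1)$ slopes along $\mathrm{path}(v_1)$ yields $|f_1(v_0)| \leq C n_1^2$. Consequently $\Pi_{\b_1}\tP_{n_1}(s)$ is contained in an $\ell_\infty$-cube of side length at most $C n_1^2$ inside the $(|\b_1|-1)$-dimensional affine subspace above, which gives
\[
\ln |\Pi_{\b_1}\tP_{n_1}(s)| \leq (|\b_1|-1)\ln(Cn_1^2).
\]

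The main subtlety, already flagged, is turning the one-sided Hessian condition $\nabla^2 f_1 \preccurlyeq s$ into a genuine two-sided Lipschitz-type control on $f_1$ along $\mathrm{path}(v_1)$. This is exactly the step where the torus structure enters nontrivially: periodicity forces slopes around any closed lattice loop to telescope to zero, so the one-sided upper bounds on $\nabla^2 f_1$ cannot be compensated for by arbitrarily large negative excursions elsewhere, and the desired two-sided slope bound of order $O(n_1)$ follows.
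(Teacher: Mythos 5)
Your proof matches the paper's argument, which is contained in the paragraph immediately preceding the lemma: the lower bound via the embedded unit cube $[0,1]^{\b_1\setminus\{0\}}$ and the upper bound via the lattice-path Lipschitz estimate giving $\|f_1\|_{\ell_\infty} \leq Cn_1^2$. Your explicit construction of a preimage for the lower bound, and your flagging and resolution (via the telescoping of slopes around closed loops on the torus) of how the one-sided Hessian bound $\nabla^2 f_1 \preccurlyeq s$ upgrades to a two-sided $O(n_1)$ slope bound, both spell out details the paper leaves implicit, but the route is the same.
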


\section{Characteristics of relevant polytopes}\lab{sec:2}

\subsection{Volume of the polytope $P_n(s)$}

\begin{lem}\lab{lem:3}
Let $n_1$ and $n_2$ be positive integers satisfying $n_1 | n_2$. Then 
\beq 1 \leq |\tP_{n_1}(s)|^{\frac{1}{n_1^2}} \leq |\tP_{n_2}(s)|^{\frac{1}{n_2^2}}\left(1 + \frac{C\log n_1}{n_1}\right).\eeq 
\end{lem}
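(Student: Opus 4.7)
The lower bound $|\tP_{n_1}(s)|^{1/n_1^2}\geq 1$ is immediate from Lemma~\ref{lem:3-} applied with $\b_1 = V(\T_{n_1})$, which makes $\Pi_{\b_1}$ the identity: the containment $[0,1]^{V(\T_{n_1})\setminus\{0\}} \subseteq \tP_{n_1}(s)$ established in its discussion (valid because four values in $[0,1]$ force $\nabla^2 x\leq 2=s_0\leq s_i$) gives $|\tP_{n_1}(s)|\geq 1$ at once.

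For the upper bound, the plan is a subadditivity / tiling argument in the spirit of statistical mechanics. I would set $k := n_2/n_1$, tile $\T_{n_2}$ into $k^2$ translated copies $D_1,\dots,D_{k^2}$ of a fundamental parallelogram of $\T_{n_1}$, and introduce a \emph{double-layer boundary} $\b_2\subset V(\T_{n_2})$ consisting of two lattice layers on each side of every $\partial D_i$, so that $|\b_2|=O(n_2)$. Under $\phi_{n_2,n_1}$ this descends to a set $\b_1\subset V(\T_{n_1})$ with $|\b_1|=O(n_1)$. Two features of this boundary drive the argument. First, the two-layer thickness ensures the \emph{fiber factorization}: every defining inequality $\nabla^2 x\preccurlyeq s$ has its four rhombus-vertices lying either entirely within one tile $D_i$ together with its adjacent $\b_2$-portion, or entirely within $\b_2$, so that writing $y=(z_1,\dots,z_{k^2})\in\R^{\b_2}$ for the tile-decomposition,
$$
|Q_{\b_2}(y)| \;=\; \prod_{i=1}^{k^2} |Q_{\b_1}(z_i)|.
$$
Second, for any $z\in\Pi_{\b_1}\tP_{n_1}(s)$ the tiled boundary $\Delta(z):=(z,\dots,z)$ lies in $\Pi_{\b_2}\tP_{n_2}(s)$ (the $n_1$-periodic extension of any element of $\tP_{n_1}(s)$ lies in $\tP_{n_2}(s)$) and satisfies $|Q_{\b_2}(\Delta(z))| = |Q_{\b_1}(z)|^{k^2}$.

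The core step is then to integrate $|Q_{\b_2}|$ over a Brunn--Minkowski-style transverse thickening of $\Delta(\Pi_{\b_1}\tP_{n_1}(s))$ inside $\Pi_{\b_2}\tP_{n_2}(s)$. Using Pr\'ekopa's log-concavity of $y\mapsto|Q_{\b_2}(y)|$ (Theorem~\ref{thm:prekopa}) to control how it decays transversely, this would yield
$$
|\tP_{n_2}(s)|\;\geq\;W\cdot\int_{\Pi_{\b_1}\tP_{n_1}(s)}|Q_{\b_1}(z)|^{k^2}\,dz,
$$
for a transverse-volume factor $W\geq\exp(-Cn_2\log n_1)$ coming from the slack in the $O(n_2)$ cross-tile rhombus constraints. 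Jensen's inequality applied to the convex function $v\mapsto v^{k^2}$ then gives
$$
\int_{\Pi_{\b_1}\tP_{n_1}(s)}|Q_{\b_1}(z)|^{k^2}\,dz\;\geq\;\frac{|\tP_{n_1}(s)|^{k^2}}{|\Pi_{\b_1}\tP_{n_1}(s)|^{k^2-1}},
$$
and combining this with the bound $|\Pi_{\b_1}\tP_{n_1}(s)| \leq (Cn_1^2)^{|\b_1|}$ from Lemma~\ref{lem:3-}, the estimate $|\b_1|(k^2-1)/n_2^2 = O(1/n_1)$, and taking $1/n_2^2$-th powers and exponentiating, produces the claimed $|\tP_{n_1}(s)|^{1/n_1^2}\leq|\tP_{n_2}(s)|^{1/n_2^2}(1+C\log n_1/n_1)$.

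The principal obstacle is the transverse-volume estimate $W\geq\exp(-Cn_2\log n_1)$: one must verify that $\Pi_{\b_2}\tP_{n_2}(s)$ contains a sufficiently thick tubular neighborhood of its tiled diagonal subspace, and that on this tube the log-concave fiber volume $|Q_{\b_2}|$ does not drop too much. This is a delicate Brunn--Minkowski argument exploiting the available slack in the cross-tile rhombus constraints, and is expected to be the technical heart of the proof.
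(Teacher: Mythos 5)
Your proposal follows essentially the same route as the paper: double-layer boundary $\b_1\subset V(\T_{n_1})$, $\b_2=\phi_{n_2,n_1}^{-1}(\b_1)$, the diagonal embedding $\psi_{\b_1,\b_2}$ with the fiber-power identity $F_2(\psi_{\b_1,\b_2}(x))=F_1(x)^{(n_2/n_1)^2}$, transverse thickening of the diagonal image inside $\Pi_{\b_2}\tP_{n_2}(s)$, Jensen/H\"older on $\int F_1^{k^2}$, and the $|\Pi_{\b_1}\tP_{n_1}(s)|\leq (Cn_1^2)^{|\b_1|}$ bound from Lemma~\ref{lem:3-}. The one place you diverge is the transverse-volume estimate, which you leave as a Pr\'ekopa-based obstacle; the paper handles it more elementarily by first shrinking $s\to(1-n_1^{-2})s$ to create slack, then observing (Claim~\ref{cl:2.2}) that an $\ell_\infty$-perturbation of size $n_1^{-4}$ of the boundary merely translates the facet hyperplanes of the fiber polytope by $O(n_1^{-4})$ so a $(1-Cn_1^{-2})$-scaled copy survives, and finally bounding the orthogonal slab's volume via Vaaler's cube-section theorem rather than Pr\'ekopa.
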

\begin{proof}The lower bound of $1$ on $|\tP_{n_1}(s)|^{\frac{1}{n_1^2}}$ follows from $[0,1]^{V(\T_{n_1})\setminus\{0\}} \subseteq \tP_n(s).$
We define the set $\b_{1} \subseteq V(\T_{n_1})$ of ``boundary vertices" to be all vertices that are either of the form $(0, y)$ or $(1, y)$ or $(x, 0)$ or $(x, 1)$, where $x, y$ range over all of $\Z/(n_1 \Z)$. We define the set $\b_{2}$ to be $\phi_{n_2, n_1}^{-1}(\b_{1}).$ For $x \in \R^{\b_1}$, let $F_1(x):= |Q_{\b_1}(x)|,$ and for  $x \in \R^{\b_2}$, let $F_2(x):= |Q_{\b_2}(x)|.$
We now have \beq|\tP_{n_1}(s)| = \int\limits_{\R^{\b_1}}F_1(x)dx = \int\limits_{\Pi_{\b_1} \tP_{n_1}(s)} F_1(x) dx.\eeq
Let  $\phi^*_{n_2, n_1}$ be the linear map from $\R^{V(\T_{n_1})}$ to $\R^{V(\T_{n_2})}$ induced by $\phi_{n_2, n_1}$. 
Let  $\psi_{\b_1, \b_2}$ be the linear map from $\R^{\b_1}$ to $\R^{\b_2}$ induced by $\phi_{n_2, n_1}$. 
Then, for $x \in \R^{\b_1},$ \beq F_2(\psi_{\b_1, \b_2}(x)) = F_1(x)^{\left(\frac{n_2}{n_1}\right)^2}.\lab{eq:2.6}\eeq
Note that that $\tP_{n}(s)$ is $n^2-1$ dimensional, has an $\ell_\infty$ diameter of $O(n^2)$ and contains a $n^2-1$ dimensional unit $\ell_\infty-$ball as a consequence of $s_0$ being set to  $2$. So the $|\b_1|-1$ dimensional polytopes  $\Pi_{\b_1} \tP_{n_1}(s)$, and $\psi_{\b_1, \b_2}(\Pi_{\b_1} \tP_{n_1}(s))$ contain $|\b_1|-1$ dimensional $\ell_\infty$ balls of radius $1.$ 
\begin{claim} \lab{cl:2.2}
Let $S_{\b_1, \b_2}(\frac{1}{n_1^{4}})$ be the set of all $y \in \R^{\b_2}$ such that there exists $x \in  \Pi_{\b_1} \tP_{n_1}((1 - \frac{1}{n_1^2})s)$ for which  $ y - \psi_{\b_1, \b_2}(x) \perp \psi_{\b_1, \b_2}(\R^{\b_1})$  and $\|y - \psi_{\b_1, \b_2}(x) \|_{\ell_\infty} < \frac{1}{n_1^{4}}.$ Then, $y \in S_{\b_1, \b_2}(\frac{1}{n_1^{4}})$ implies the following. \ben \item $y \in \Pi_{\b_2} \tP_{n_2}((1 - \frac{1}{2n_1^2})s)$ and \item 
 $|Q_{\b_2} (y)| \geq c^{(\frac{n_2}{n_1})^2} |Q_{\b_2} (\psi_{\b_1, \b_2}(x))|.$\een
\end{claim}
\begin{proof}
The first assertion of the claim follows from the triangle inequality. To see the second assertion, 
let the vector $w \in \R^{V(\T_{n_2})}$ equal $0$ on all the coordinates indexed by $V(\T_{n_2})\setminus \b_2$ and equal $\psi_{\b_1, \b_2}(x) - y$ on coordinates indexed by $\b_2$.
We know that $x \in  \Pi_{\b_1} \tP_{n_1}((1 - \frac{1}{n_1^2})s)$. Therefore, \ben \item[$(\ast)$] $Q_{\b_2} (\psi_{\b_1, \b_2}(x)) -w$ has dimension $n_2^2 - |\b_2|$, and contains an axis aligned cube of side length $\frac{c}{n_1^2},$ and hence a euclidean ball of radius $\frac{c}{n_1^2}$.\een Since every constraint defining $\tP_{n_2}(s)$ has the form $x_a + x_b - x_c - x_d \leq s_i,$ or $x_0 = 0$, \ben \item[$(\ast \ast)$] the affine spans of the codimension $1$ faces of the fiber polytope $Q_{\b_2}(y)$ are respectively translates of the affine spans of the corresponding codimension $1$ faces of $Q_{\b_2} (\psi_{\b_1, \b_2}(x)) -w$ by  euclidean distances that do not exceed $\frac{C}{ n_1^{4}}.$ \een
Therefore, by $(\ast)$ and $(\ast \ast)$, some translate of $(1 - \frac{C}{n_1^{2}})Q_{\b_2} (\psi_{\b_1, \b_2}(x))$ is contained inside $Q_{\b_2} (y)$, completing the proof of Claim~\ref{cl:2.2}.
\end{proof}
Let $K$ denote the intersection of the origin symmetric cube of radius $\frac{1}{n_1^{4}}$ in $\R^{\b_2}$ with the orthocomplement of $\psi_{\b_1, \b_2}(\R^{\b_1})$. By the lower bound of $1$ on the volume of a central section of the unit  cube (due to Vaaler \cite{Vaaler}), it follows that the volume of $K$ is at least $\left(\frac{1}{n_1^{4}}\right)^{|\b_2| - |\b_1|}.$
The inequalities below now follow from (\ref{eq:2.6}) and Claim~\ref{cl:2.2}.
\beqs |\tP_{n_2}(s)|  & = & \int\limits_{\Pi_{\b_2} \tP_{n_2}(s)}|Q_{\b_2}(y)|dy\\
                             & \geq & \int\limits_{\Pi_{\b_2} \tP_{n_2}((1 - \frac{1}{2n_1^2})s)} F_2(y) dy\lab{eq:2.8}\\
& \geq & \int\limits_{ S_{\b_1, \b_2}(\frac{1}{n_1^4})} F_2(y) dy\\
& \geq & \vol(K)\int\limits_{\psi_{\b_1, \b_2}(\Pi_{\b_1} \tP_{n_1}((1 - \frac{1}{n_1^2})s))} c^{(\frac{n_2}{n_1})^2}F_2(z) dz\\
& \geq &  \vol(K)\int\limits_{\Pi_{\b_1} \tP_{n_1}((1 - \frac{1}{n_1^2})s)}c^{(\frac{n_2}{n_1})^2} F_1(x)^{\left(\frac{n_2}{n_1}\right)^2}dx\nonumber\\
& \geq &  c^{(\frac{n_2}{n_1})^2}\left(\frac{1}{n_1^{4}}\right)^{|\b_2|-|\b_1|}\int\limits_{\Pi_{\b_1} \tP_{n_1}((1 - \frac{1}{n_1^2})s)} F_1(x)^{\left(\frac{n_2}{n_1}\right)^2}dx.\eeqs

By Lemma~\ref{lem:3-}, $ n_1^{-Cn_1} \leq |\Pi_{\b_1} \tP_{n_1}(s)| \leq n_1^{Cn_1}$, for some universal positive constant $C > 1.$ Also, $c  |\Pi_{\b_1} \tP_{n_1}(s)| \leq |\Pi_{\b_1} \tP_{n_1}((1 - \frac{1}{n_1^2})s)| \leq |\Pi_{\b_1} \tP_{n_1}(s)|.$
\beqs \int\limits_{\Pi_{\b_1} \tP_{n_1}((1 - \frac{1}{n_1^2})s)} F_1(x)^{\left(\frac{n_2}{n_1}\right)^2}dx & \geq & |\Pi_{\b_1} \tP_{n_1}((1 - \frac{1}{n_1^2})s)|^{1 - (n_2/n_1)^2}  \\ &\times&\left(\int\limits_{\Pi_{\b_1} \tP_{n_1}((1 - \frac{1}{n_1^2})s)} F_1(x)dx\right)^{\left(\frac{n_2}{n_1}\right)^2}\\
& \geq &  |\Pi_{\b_1} \tP_{n_1}(s)|^{1 - (n_2/n_1)^2}  | \tP_{n_1}((1 - \frac{1}{n_1^2})s)|^{\left(\frac{n_2}{n_1}\right)^2}\\
& \geq &  |\Pi_{\b_1} \tP_{n_1}(s)|^{1 - (n_2/n_1)^2}  \left(c| \tP_{n_1}(s)|\right)^{\left(\frac{n_2}{n_1}\right)^2}\\
& \geq & (Cn_1^{Cn_1})^{1 - (n_2/n_1)^2} | \tP_{n_1}(s)|^{\left(\frac{n_2}{n_1}\right)^2}. \eeqs
Thus,
\beq | \tP_{n_1}(s)|^{\left(\frac{n_2}{n_1}\right)^2} \leq (Cn_1^{Cn_1})^{(n_2/n_1)^2-1} \left(n_1^{4}\right)^{|\b_2|-|\b_1|}|\tP_{n_2}(s)|,  \eeq
which gives us \beq| \tP_{n_1}(s)|^{\left(\frac{1}{n_1}\right)^2} & \leq & (Cn_1^{Cn_1})^{(1/n_1^2)-(1/n_2^2)} \left(n_1^{4}\right)^{\frac{|\b_2|-|\b_1|}{n_2^2}}|\tP_{n_2}(s)|^{\frac{1}{n_2^2}}\\
& \leq & |\tP_{n_2}(s)|^{\frac{1}{n_2^2}}n_1^{\frac{C}{n_1}}\\
& \leq & |\tP_{n_2}(s)|^{\frac{1}{n_2^2}}\left(1 + \frac{C \log n_1}{n_1}\right).\eeq
\end{proof}
For a positive integer $n$, let $[n]$ denote the set of positive integers less or equal to $n$, and let $[n]^2$ denote $[n]\times [n]$.
In what follows, we will use $v$ to denote an arbitrary vertex in $V(\T_{n_3})$. Then, by symmetry, \beq \frac{\int_{P_{n_3}(s)} x(v) dx}{|P_{n_3}(s)|} & = &  \left(\frac{1}{n_3^2}\right)\sum_{v' \in V(\T_{n_3})}  \frac{\int_{P_{n_3}(s)} x(v') dx}{ |P_{n_3}(s)|} \\
& = &   \frac{\int_{P_{n_3}(s)} \left(\frac{\sum_{v' \in V(\T_{n_3})} x(v')}{n_3^2}\right)dx}{|P_{n_3}(s)|} \\
& = & 0.\lab{eq:2.10} \eeq
The linear map $u:P_{n_3}(s) \rightarrow \tP_{n_3}(s)$ defined  by $u(x)(v) = x(v) - x(0)$ is surjective and volume preserving. Therefore,  
\beq \frac{\int_{\tP_{n_3}(s)} x(v) dx}{|\tP_{n_3}(s)|} & = &   \frac{\int_{P_{n_3}(s)} u(x)(v) dx}{ |P_{n_3}(s)|} \\
& = &  \frac{\int_{P_{n_3}(s)} x(v) dx}{|P_{n_3}(s)|} -  \frac{\int_{P_{n_3}(s)} x(0) dx}{|P_{n_3}(s)|}\\
& = & 0.\eeq
\begin{lem}\lab{lem:4}
Let $C < n_2 < n_3$. Then, 
\beq |P_{n_2}(s)|^{\frac{1}{n_2^2}} \geq |P_{n_3}(s)|^{\frac{1}{n_3^2}}\left(1 - \frac{C (n_3 - n_2) \ln n_3}{n_3}\right).\eeq  
\end{lem}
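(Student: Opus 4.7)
The plan is to upper bound $|\tP_{n_3}(s)|$ in terms of $|\tP_{n_2}(s)|$ by a boundary-layer argument modeled on the proof of Lemma~\ref{lem:3}, and then transfer the resulting estimate to $|P_n(s)|$ via the preliminary inequality $|\tP_n(s)|^{1/n^2}(1 \pm C\log n/n) \simeq |P_n(s)|^{1/n^2}$. The target rate $(n_3-n_2)\ln n_3/n_3$ suggests choosing a ``boundary'' set $\b \subset V(\T_{n_3})$ of cardinality $|\b| \leq C n_3(n_3-n_2)$; concretely, I would take $\b$ to be the union of a horizontal strip and a vertical strip in $\T_{n_3}$, each of width approximately $n_3-n_2+2$, so that the complement $V(\T_{n_3})\setminus\b$ is combinatorially an $n_2 \times n_2$ sub-block, and the double thickness ensures that after conditioning on $x|_\b$, the fiber is decoupled from the fixed boundary.

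Writing
$$|\tP_{n_3}(s)| \;=\; \int_{\Pi_\b \tP_{n_3}(s)} |Q_\b(y)|\,dy,$$
I apply Lemma~\ref{lem:3-} to get $|\Pi_\b \tP_{n_3}(s)| \leq (Cn_3^2)^{|\b|}$ and the fiber estimate (Step~3 below) $|Q_\b(y)| \leq |\tP_{n_2}(s)|\cdot(1+o(1))^{n_3^2}$. Combining and taking $1/n_3^2$-th powers yields
$$|\tP_{n_3}(s)|^{1/n_3^2} \;\leq\; (Cn_3^2)^{|\b|/n_3^2}\, |\tP_{n_2}(s)|^{1/n_3^2}\,(1+o(1)).$$
Since $|\tP_{n_2}(s)|^{1/n_2^2} \geq 1$ by Lemma~\ref{lem:3}, we have $|\tP_{n_2}(s)|^{1/n_3^2} = \bigl(|\tP_{n_2}(s)|^{1/n_2^2}\bigr)^{n_2^2/n_3^2} \leq |\tP_{n_2}(s)|^{1/n_2^2}$, and $(Cn_3^2)^{|\b|/n_3^2} \leq \exp\!\bigl(C(n_3-n_2)\ln n_3/n_3\bigr)$, so rearranging via $e^{-x} \geq 1-x$ gives the bound for $\tP$. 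The passage to $P$ via the preliminary equivalence absorbs only a lower-order multiplicative factor $1 \pm C\log n/n$.

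The main obstacle is the fiber bound $|Q_\b(y)| \leq |\tP_{n_2}(s)|\cdot(1+o(1))^{n_3^2}$. The fiber $Q_\b(y)$ consists of concave functions on the $n_2\times n_2$ sub-block that agree with $y$ on the double-layer boundary, whereas $\tP_{n_2}(s)$ consists of concave functions on the torus $\T_{n_2}$ with $x(0)=0$; the two polytopes have essentially the same dimension but structurally different boundary conditions (Dirichlet-type versus toroidal). My strategy is to construct an affine map from the fiber into a translate of $\tP_{n_2}(s)$ by subtracting a piecewise-affine correction determined by $y$ that absorbs the boundary data, so that the remaining Hessian inequalities coincide (up to $O(n_2)$ boundary-adjacent constraints) with those defining $\tP_{n_2}(s)$. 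The Brunn--Minkowski inequality, together with the fact that $\tP_n(s)$ contains $[0,1]^{V(\T_n)\setminus\{0\}}$, is used to control the residual volume discrepancy arising from the mismatched boundary rhombi. Carrying out this comparison rigorously, with the correct accounting of constraint overlaps along the two strips, is the technical heart of the argument.
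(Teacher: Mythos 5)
Your overall skeleton matches the paper's: a strip/frame boundary set $\b \subset V(\T_{n_3})$ whose complement is an $n_2 \times n_2$ block, the fiber decomposition $|\tP_{n_3}(s)| = \int_{\Pi_\b \tP_{n_3}(s)} |Q_\b(y)|\,dy$, Lemma~\ref{lem:3-} to bound $|\Pi_\b \tP_{n_3}(s)|$, and the final bookkeeping with $|\tP_{n_2}(s)| \geq 1$ and $e^{-x} \geq 1-x$. The gap is in the step you correctly flag as the heart of the matter.

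You aim for a uniform fiber bound $|Q_\b(y)| \leq |\tP_{n_2}(s)|\cdot(1+o(1))^{n_3^2}$ over \emph{all} admissible boundary data $y$, to be obtained by subtracting a ``piecewise-affine correction determined by $y$ that absorbs the boundary data.'' This subtraction cannot work as described: the discrete hessian of a piecewise-affine correction $p_y$ is concentrated and nonzero at its kinks, so the shifted function $x - p_y$ satisfies $\nabla^2(x - p_y) \preccurlyeq s - \nabla^2 p_y$, a constraint set of a different (and $y$-dependent) polytope, not $\tP_{n_2}(s)$. A genuinely affine correction has zero hessian but only three free parameters, far too few to absorb the $\sim n_3(n_3 - n_2)$ degrees of freedom in $y$. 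More fundamentally, there is no reason a uniform upper bound on $|Q_\b(y)|$ should be reachable this way, because the Dirichlet fiber over a strongly concave $y$ can be much larger than the periodic-torus polytope.

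The paper circumvents this by never attempting a uniform fiber bound. It exploits two structural facts. First, by Pr\'ekopa's theorem (\ref{thm:prekopa}) the fiber-volume function is logconcave, and by Brunn--Minkowski $|Q_\b(\cdot)|^{1/(n_3^2-|\b|)}$ is concave on $\Pi_\b\tP_{n_3}(s)$; combining Jensen against the mass measure (whose mean is the zero boundary value, by the symmetry computation preceding the lemma) with Jensen/$L_p$-monotonicity against the uniform measure on $\Pi_\b\tP_{n_3}(s)$ yields $|Q_\b(0)| \geq |\tP_{n_3}(s)|/|\Pi_\b\tP_{n_3}(s)|$, i.e.\ the \emph{zero}-boundary fiber has at least average volume. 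Second, the zero-boundary fiber (augmented by a thin slack cube of side $\frac{1}{(n_2+2)^2}$ on a one-vertex-thick layer) embeds isometrically into $\tP_{n_2+2}(s(1 + \tfrac{2}{(n_2+2)^2}))$ by wrap-around, precisely because zero boundary data is trivially periodic. Your proposal identifies neither of these two mechanisms, and without them the fiber comparison does not close.
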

\begin{proof}  Let $\rho:V(\T_{n_2}) \ra [n_2]^2\subseteq \Z^2$ be the unique map that satisfies $\phi_{0, n_2} \circ \rho = id$ on $V(\T_{n_2})$. We embed $V(\T_{n_2})$ into $V(\T_{n_3})$ via  
  $\phi_{0, n_3} \circ \rho,$ and define $\b$ to be $ V(\T_{n_3})\setminus (\phi_{0, n_3} \circ \rho(V(\T_{n_2}))).$  Note that $0 \in \b$, since $0 \not \in [n_2].$ Recall that $Q_{ \b}(x)$ was defined to be the fiber polytope over $x$, that arises from the projection map $\Pi_{\b}$ of $\tP_n(s)$ onto $\R^{\b}.$
Thus,
\beqs \int\limits_{\R^{\b\setminus \{0\}}} \left(\frac{|Q_{\b}(x)| }{|\tP_{n_3}(s)|}\right)x dx & = & \Pi_\b  \left(\frac{\int_{\tP_{n_3}(s)} x(v) dx}{|\tP_{n_3}(s)|}\right)\\ 
& = & 0.\eeqs
By Theorem~\ref{thm:prekopa}, $\frac{|Q_{\b}(x)| }{|\tP_{n_3}(s)|}$ is a logconcave function of $x\in \tP_{n_3}(s)$. 
$\frac{|Q_{\b}(x)| }{|\tP_{n_3}(s)|}$ is a non-negative and integrable function of $x$, and hence by the Brunn-Minkowski inequality, it follows that
\beqs \int\limits_{\R^{\b\setminus \{0\}}} \left(\frac{|Q_{\b}(x)|}{|\tP_{n_3}(s)|}\right)|Q_{\b}(x)|^{\frac{1}{n_3^2 - |\b|}} dx \leq |Q_{\b}(0)|^{\frac{1}{n_3^2 - |\b|}} .\eeqs
Therefore, 
\beqs \int\limits_{\Pi_{\b} \tP_{n_3}(s)} |Q_{\b}(x)|^{1 + \frac{1}{n_3^2 - |\b|}} \left(\frac{dx}{|\Pi_\b\tP_{n_3}(s)|}\right) \leq \left(\frac{| \tP_{n_3}(s)|}{|\Pi_{\b} \tP_{n_3}(s)|}\right)|Q_{\b}(0)|^{\frac{1}{n_3^2 - |\b|}}.\eeqs
By the monotonic increase of $L_p(\mu)$ norms as $p$ increases from $1$ to $\infty$,  for the probability measure $\mu(dx) = \frac{dx}{|\Pi_\b\tP_{n_3}(s)|}$, we see that 
\beq \int\limits_{\Pi_{\b}\tP_{n_3}(s)} |Q_{\b}(x)|^{1 + \frac{1}{n_3^2 - |\b|}} \frac{dx}{|\Pi_\b \tP_{n_3}(s)|} & \geq &  \left(\int\limits_{\Pi_{\b}\tP_{n_3}(s)} |Q_{\b}(x)| \frac{dx}{|\Pi_\b \tP_{n_3}(s)|}\right)^{1 + \frac{1}{n_3^2 - |\b|}}\\ & = & \left(\frac{| \tP_{n_3}(s)|}{|\Pi_{\b} \tP_{n_3}(s)|}\right)^{1 + \frac{1}{n_3^2 - |\b|}}.\eeq
It follows that \beq |Q_{\b}(0)| \geq \frac{| \tP_{n_3}(s)|}{|\Pi_{\b} \tP_{n_3}(s)|}.\eeq
Suppose that $n_2 + 2 < n_3$.
Let $\rho_+:V(\T_{n_2+2}) \ra [n_2+2]^2\subseteq \Z^2$ be the unique map that satisfies $\phi_{0, n_2+2} \circ \rho_+ = id$ on $V(\T_{n_2+2})$. We embed $V(\T_{n_2+2})$ into $V(\T_{n_3})$ via  
  $\phi_{0, n_3} \circ \rho_+,$ and define $\tb$ to be $ V(\T_{n_3})\setminus (\phi_{0, n_3} \circ \rho_+(V(\T_{n_2+2}))).$ 
We observe that $|\tP_{n_2+2}(s(1 + \frac{2}{(n_2+2)^2}))|$ is greater or equal to $|Q_{\b}(0)|(\frac{1}{(n_2+2)^2}))^{|\b|-|\tb|},$ since $\phi_{0, n_3} \circ \rho_+,$ induces an isometric map from $Q_\b(0) + [0, \frac{1}{(n_2+2)^2}]^{\b\setminus\tb}$ into $\tP_{n_2+2}(s(1 + \frac{2}{(n_2+2)^2})).$
Thus,
\beqs |\tP_{n_2 + 2}(s)| & = &  (1 + \frac{2}{(n_2+2)^2})^{-(n_2+2)^2+1}|\tP_{n_2+2}(s(1 + \frac{2}{(n_2+2)^2}))|\\
&  \geq & e^{-2} |Q_{\b}(0)|(\frac{1}{(n_2+2)^2})^{|\b|-|\tb|}\\
&  \geq & \frac{e^{-2}| \tP_{n_3}(s)|(\frac{1}{(n_2+2)^2})^{|\b|-|\tb|} }{|\Pi_{\b} \tP_{n_3}(s)|}\\
& \geq & | \tP_{n_3}(s)| (Cn_3)^{-Cn_3(n_3-n_2)}.\eeqs
Noting that $\tP_{n_2+2}(s)$ contains a unit cube and hence has volume at least $1$, we see that 
\beq |\tP_{n_2 + 2}(s)|^{\frac{1}{(n_2 + 2)^2}} & \geq & |\tP_{n_2+2}(s)|^{\frac{1}{n_3^2}}\\
& \geq & | \tP_{n_3}(s)|^{\frac{1}{n_3^2}} (C n_3)^{-C(1-\frac{n_2}{n_3})}\\
& \geq &  | \tP_{n_3}(s)|^{\frac{1}{n_3^2}} \left(1 - \frac{C (n_3 - n_2) \ln n_3}{n_3}\right).\eeq
Noting that $n_2 + 2 < n_3$ and relabeling $n_2 + 2$ by $n_2$  gives us the lemma.
\end{proof}

We will need the notion of differential entropy (see page 243 of \cite{Cover}).
\begin{defn}[Differential entropy]
Let ${\displaystyle X}$ be a random variable supported on a finite dimensional Euclidean space $\R^m$,
associated with a measure $\mu$ that is absolutely continuous with respect to the Lebesgue measure. Let the Radon-Nikodym derivative of $\mu$ with respect to the Lebesgue measure be denoted $f$. The differential entropy of $X$, denoted ${\displaystyle h(X)}$ (which by overload of notation, we shall also refer to as the differential entropy of $f$, i.e. $h(f)$), is defined  as
${\displaystyle h(X)=-\int _{\R^m}f(x)\ln f(x)\,dx}$.
\end{defn}

We will also need the notion of conditional differential entropy ${\displaystyle h(X|Y)}$ (page 249 of \cite{Cover}).

\begin{defn}[Conditional differential entropy]

Let ${\displaystyle (X, Y)}$ where $X \in \R^m$, and $Y \in \R^n$ be a random variable supported on a finite dimensional Euclidean space $\R^{m}\times \R^{n}$ having a joint density function $f(x, y)$ with respect to the Lebesgue measure on $\R^{m}\times \R^{n}$. Then,
$${\displaystyle h(X|Y)=-\int _{\R^{m} \times {\R^n}} f(x,y)\log f(x|y)\,dxdy}.$$

 Let the Radon-Nikodym derivative of $\mu$ with respect to the Lebesgue measure be denoted $f$. The differential entropy of $X$, denoted ${\displaystyle h(X)}$ (which by overload of notation, we shall also refer to as the differential entropy of $f$, i.e. $h(f)$), is defined  as
${\displaystyle h(X)=-\int _{\R^m}f(x)\ln f(x)\,dx}$.
\end{defn}
The following Lemma is well known, but we include a proof for the reader's convenience.
\begin{lem}\lab{lem:5}
The differential entropy of a  mean $1$ distribution with a bounded Radon-Nikodym derivative with respect to the Lebesgue measure, supported on $[0, \infty)$ is less or equal to $1$, and equality is achieved on the exponential distribution.
\end{lem}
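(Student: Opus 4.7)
The plan is to prove the inequality via the standard variational characterization: the exponential distribution maximizes differential entropy among distributions on $[0,\infty)$ with a prescribed mean, and this is a direct consequence of Gibbs' inequality (non-negativity of relative entropy).

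Let $f$ denote the Radon-Nikodym derivative of the distribution in question with respect to Lebesgue measure on $[0,\infty)$, and let $g(x) = e^{-x}$ be the density of the exponential distribution with mean $1$. First I would form the relative entropy (Kullback-Leibler divergence)
\begin{equation*}
D(f \| g) = \int_0^\infty f(x) \ln \frac{f(x)}{g(x)}\, dx,
\end{equation*}
and observe via Jensen's inequality applied to the convex function $t \mapsto -\ln t$ that $D(f \| g) \geq 0$, with equality if and only if $f = g$ almost everywhere. (The boundedness of $f$ ensures that $f \ln f$ is integrable on any bounded portion of the support, and the mean condition together with $-\ln g(x) = x$ controls the tail, so the integrals below are well defined.)

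Next I would expand the right-hand side:
\begin{equation*}
D(f \| g) = -h(f) - \int_0^\infty f(x) \ln g(x)\, dx = -h(f) + \int_0^\infty x f(x)\, dx.
\end{equation*}
Using the hypothesis $\int_0^\infty x f(x)\, dx = 1$, this simplifies to
\begin{equation*}
D(f\|g) = 1 - h(f),
\end{equation*}
so $h(f) \leq 1$ immediately follows from $D(f\|g)\geq 0$. The equality case reduces to $f = g$ a.e., which is the exponential distribution with rate $1$; one verifies directly that $h(g) = -\int_0^\infty e^{-x}(-x)\, dx = 1$.

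The proof is essentially mechanical once Gibbs' inequality is in hand, so there is no real obstacle; the only care required is to justify the integrability of $f \ln f$ and $x f(x)$, which is handled by the boundedness of $f$ together with the finite first moment assumption (which implies a uniform tail decay sufficient to make $\int f |\ln g| = \int x f(x) < \infty$).
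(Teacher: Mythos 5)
Your proof is correct and takes essentially the same approach as the paper: both compare $f$ to the exponential density $g(x)=e^{-x}$ via relative entropy, invoke Jensen's inequality for $D(f\|g)\geq 0$, and use the mean-$1$ hypothesis to get $D(f\|g)=1-h(f)$. Your added remarks on integrability and the equality case are sound but not part of the paper's terse argument.
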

\begin{proof}
Let $f:[0, \infty) \ra \R$ denote a density supported on the non-negative reals, whose associated distribution $F$ has mean $1$. Let $g:[0, \infty) \ra \R$ be given by $g(x) := e^{-x}$. The relative entropy between $f$ and $g$ is given by 

\beq D(f||g) := \int_{[0, \infty)} f(x) \ln\left(\frac{f(x)}{g(x)}\right)dx,\eeq and can be shown to be non-negative for all densities $f$ using Jensen's inequality.
We observe that \beq D(f||g) & = & -h(f) +  \int_{[0, \infty)} f(x) \ln\left({e^{x}}\right)dx\\
                                          & = & - h(f) + 1, \eeq because $F$ has mean $1$.
This implies that $h(f) \leq 1 = h(g)$.
\end{proof}

\begin{lem}\lab{lem:6}
If $s_0= 2$, \beqs |P_{n+1}(s)| \leq \exp\left(Cn \ln Cn + (1 + \ln 2) n^2\right),\eeqs
\end{lem}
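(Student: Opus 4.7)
The plan is to relate $\ln |P_{n+1}(s)|$ to the differential entropy of a uniform random $X \in \tP_{n+1}(s)$ (equal to $|P_{n+1}(s)|$ up to a polynomial-in-$n$ factor absorbed into $e^{Cn\ln Cn}$) and then to decompose $X$ through a linear change of coordinates into $n^2$ non-negative slack variables, one for each of enough $E_0$-rhombi to span the image of $\nabla^2 \cdot |_{E_0}$, together with $2n$ bounded ``boundary'' coordinates. Since the linear map $u:P_{n+1}(s)\to\tP_{n+1}(s)$ commutes with $\nabla^2$, the marginal distribution of $\nabla^2 X$ under the uniform measure on $\tP_{n+1}(s)$ agrees with that under the torus-translation-invariant uniform measure on $P_{n+1}(s)$; averaging the identity $\sum_{e\in E_0}\nabla^2 X(e)\equiv 0$ then yields $\EE[\nabla^2 X(e)]=0$, so the slack $Y_e:=s_0-\nabla^2 X(e)=2-\nabla^2 X(e)\ge 0$ has mean $2$. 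The scaled form of Lemma~\ref{lem:5} therefore gives $h(Y_e)\le 1+\ln 2$ for every $e\in E_0(\T_{n+1})$.

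\medskip
\noindent\textbf{Kernel computation and choice of coordinates.} In rectangular coordinates where the $E_0$-rhombus at $(i,j)$ has vertices $(i,j),(i+1,j),(i+1,j+1),(i+2,j+1)$, its $\nabla^2 X$ equals $p(i,j)-p(i+1,j+1)$ with $p(i,j)=X(i+1,j)-X(i,j)$. Imposing this to vanish on all $(i,j)$ forces $p(i,j)=\phi(i-j)$ for some $\phi:\Z/(n+1)\Z\to\R$, and torus periodicity gives the single constraint $\sum_m\phi(m)=0$; the general kernel element is $X(i,j)=\psi(j)+\sum_{k=0}^{i-1}\phi(k-j)$ with $\psi:\Z/(n+1)\Z\to\R$ arbitrary, so the kernel has dimension $2n+1$ and the image has dimension $(n+1)^2-(2n+1)=n^2$. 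Choose $n^2$ rhombi $e_1,\ldots,e_{n^2}$ whose $\nabla^2 X(e_j)$ span this image, and set $Y_j:=2-\nabla^2 X(e_j)$. Choose the $2n$ boundary vertices $v_1,\ldots,v_{2n}:=\{(0,j)\}_{j=1}^{n}\cup\{(i,0)\}_{i=1}^{n}$ and set $Z_i:=X(v_i)$. Under the normalization $X(0,0)=0$ defining $\tP_{n+1}(s)$, the values $X(0,j)=\psi(j)$ and $X(i,0)=\sum_{k=0}^{i-1}\phi(k)$ recover $\psi$ and $\phi$ uniquely, so the $Z_i$'s parametrize the kernel faithfully.

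\medskip
\noindent\textbf{Entropy bound and conclusion.} The map $M:X\mapsto(Y_1,\ldots,Y_{n^2},Z_1,\ldots,Z_{2n})$ is thus a linear bijection $\R^{N-1}\to\R^{N-1}$ with $N=(n+1)^2$; its matrix has entries in $\{-1,0,1\}$, each $Y_j$-row being $-\nabla^2\cdot(e_j)$ and each $Z_i$-row a standard basis vector, so it is a nonsingular integer matrix and $|\det M|\ge 1$. The change-of-variables formula for differential entropy and subadditivity give
\begin{equation*}
h(X) \;=\; h(Y,Z)-\ln|\det M| \;\le\; h(Y)+h(Z) \;\le\; \sum_{j=1}^{n^2}h(Y_j)+\sum_{i=1}^{2n}h(Z_i).
\end{equation*}
The first sum is bounded by $n^2(1+\ln 2)$. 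For the second, the $\ell_\infty$-diameter estimate preceding Lemma~\ref{lem:3-} gives $|Z_i|\le Cn^2$, so $h(Z_i)\le\ln(2Cn^2)$ and the total is $O(n\ln n)$. Therefore $\ln|\tP_{n+1}(s)|=h(X)\le n^2(1+\ln 2)+Cn\ln(Cn)$, and the same bound transfers to $|P_{n+1}(s)|$.

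\medskip
\noindent\textbf{Main obstacle.} The principal technical step is the kernel description of $\nabla^2\cdot|_{E_0}$ and the verification that the explicit choice of rhombi and axis-vertices yields an invertible $\{-1,0,1\}$-valued change-of-variables matrix; once this linear-algebra bookkeeping is in hand, the entropy inequalities themselves follow mechanically from Lemma~\ref{lem:5} and subadditivity.
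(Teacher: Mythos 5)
Your proof is correct in outline, and it reaches the stated bound by a genuinely different route from the paper. The paper's proof uses the chain rule for differential entropy directly on the original coordinates: it orders the interior vertices lexicographically, builds $\b^{(0)} \subset \b^{(1)} \subset \cdots$ one vertex at a time, writes $h(x) = h(\Pi_{\b^{(0)}}x) + \sum_i h(x_{v_i}\mid\Pi_{\b^{(i-1)}}x)$, and at each step replaces $x_{v_i}$ by the $E_0$-hessian at the freshly completed unit square (a shift by a function of the conditioning variables, hence entropy-preserving), after which ``conditioning reduces entropy'' together with Lemma~\ref{lem:5} gives the $1+\ln 2$ per step. This sequential scheme avoids any explicit kernel computation or determinant estimate: the change of variables is automatically unit-triangular in the lexicographic order, so invertibility with determinant $\pm 1$ is built in. Your argument instead performs the change of variables globally --- you compute the kernel of $\nabla^2\cdot|_{E_0}$ (dimension $2n$ once $X(0,0)=0$ is imposed, image dimension $n^2$), choose a non-canonical spanning family of $n^2$ rhombi, adjoin $2n$ axis vertices as kernel coordinates, check that the resulting $\{-1,0,1\}$-matrix is nonsingular with $|\det|\ge 1$, and then invoke subadditivity $h(Y,Z)\le\sum h(Y_j)+\sum h(Z_i)$. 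The trade-off is that your version requires the kernel description and the verification that the combined system is invertible (which you flag as the main obstacle), whereas the paper's version gets those facts for free from the lexicographic peeling; on the other hand your decomposition is arguably more transparent in separating the ``hessian slacks'' from the ``boundary'' degrees of freedom, and it dispenses with conditional entropy in favour of plain subadditivity. Two small points to tidy: the sign in your local formula for $\nabla^2$ at an $E_0$-rhombus is the negative of the paper's $D_0$, so you should make sure the resulting slack is $s_0 - \nabla^2 X(e)$ with the paper's sign so that the constraint $\nabla^2 X(e)\le s_0=2$ gives nonnegativity; and the factor relating $|P_{n+1}|$ and $|\tP_{n+1}|$ is of order $\exp(Cn\log n)$ rather than polynomial in $n$, though it is indeed absorbed into the $Cn\ln Cn$ term as you say.
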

\begin{proof}
  Suppose without loss of generality that $$\phi_{0, n + 1}\left(\{(1, 1), (1, 2), (2, 1), (2, 2)\}\right) \in E_0(\T_{n+1}),$$ and thus that the hessian corresponding to this edge is at most $s_0 = 2$. Define $\b^{(0)}$ to be $ V(\T_{n+1})\setminus (\phi_{0, n + 1}([n]^2).$ We list $[n]^2$ in lexicographically increasing order as $$((1, 1), \dots, (1, n), (2, 1), \dots, (2, n), \dots, (n, 1), \dots, (n, n))$$  and denote this sequence by $((p_1, q_1), \dots, (p_{n^2}, q_{n^2}))$, and iteratively define $\b^{(i)} := \b^{(i-1)}\cup\{\phi_{0, n + 1} ((p_i, q_i))\}$ for $i = 1$ to $n^2$. We see that for each $i \in [n^2]$, $\phi_{0, n + 1} ((p_i-1, q_i))$ and $\phi_{0, n + 1} ((p_i-1, q_i-1))$ and $\phi_{0, n + 1} ((p_i, q_i-1))$ are in $\b^{(i-1)}$. Let these respectively correspond to  $\b^{(E(i))}\setminus \b^{(E(i)-1)} and  \b^{(SE(i))}\setminus \b^{(SE(i)-1)}$ and $\b^{(S(i))}\setminus \b^{(S(i)-1)}.$ Let $x$ be a random sample from the uniform probability measure on $P_{n+1}(s).$ and let $x(\phi_{0, n+1}((p_i-1, q_i))) =x_i.$
We see that \beqs h\left(\Pi_{\b^{(i)}} x \big | \Pi_{\b^{(i-1)}} x\right) & = & h\left(\Pi_{\b^{(i)}} x - \Pi_{\b^{(i-1)}} x \big | \Pi_{\b^{(i-1)}} x\right)\\
& = & h\left(x_i \big |  \Pi_{\b^{(i-1)}} x\right)\\
& = & h\left(x_i - \left(x_{E(i)} + x_{S(i)} - x_{SE(i)}\right) \big |  \Pi_{\b^{(i-1)}} x\right)\\
& \leq & h\left(x_i - x_{E(i)} - x_{S(i)} + x_{SE(i)}\right).\eeqs
The random variable $x_i - x_{E(i)} - x_{S(i)} + x_{SE(i)}$ is bounded above by $s_0 = 2$, and by 
(\ref{eq:2.10}) has expectation $0$.
 The conditions of Lemma~\ref{lem:5} are thus satisfied by $\frac{2 - x_i + x_{E(i)} + x_{S(i)} - x_{SE(i)}}{2}$, giving us 
\beq h\left(x_i - x_{E(i)} - x_{S(i)} + x_{SE(i)}\right) \leq 1 + \ln 2. \lab{eq:ln2}\eeq
Since \beq h(x) \leq h(\Pi_{\b^{(0)}} x) + \sum_{i \in [n]^2} h(\Pi_{\b^{(i)}} x\big | \Pi_{\b^{(i-1)}} x ),\eeq
by Lemma~\ref{lem:3} and (\ref{eq:ln2}), we have 
\beq h(x) \leq Cn \ln Cn + (1 + \ln 2) n^2. \eeq
This implies that for all positive integers $n$, \beq|P_{n+1}| \leq \exp\left(Cn \ln Cn + (1 + \ln 2) n^2\right),\eeq
implying in particular that $|P_n(s)|^{\frac{1}{n^2}}$ is bounded above by  $C$.
\end{proof}

We will use the lemmas in this section to prove the following.

\begin{lem}\lab{lem:7}
Let $s_0 = 2$. Then, as $n\ra \infty$, $|P_n(s)|^{\frac{1}{n^2}}$  converges to a limit in the interval $[1, 2e]$.
\end{lem}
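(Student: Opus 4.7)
The plan is to combine Lemmas~\ref{lem:3},~\ref{lem:4},~\ref{lem:6} together with the asymptotic equivalence
$|\tP_n(s)|^{1/n^2}(1 - C\log n/n) \leq |P_n(s)|^{1/n^2} \leq |\tP_n(s)|^{1/n^2}(1 + C\log n/n)$
stated in the preliminaries. Because this equivalence forces $|P_n(s)|^{1/n^2}$ and $|\tP_n(s)|^{1/n^2}$ to have the same $\liminf$ and $\limsup$, it suffices to analyze $a_n := |P_n(s)|^{1/n^2}$ and freely switch to $\tilde a_n := |\tP_n(s)|^{1/n^2}$ up to a $(1 + C \log n/n)$ factor. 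The bounds on the limit range are immediate: the inclusion $[0,1]^{V(\T_n)\setminus\{0\}} \subseteq \tP_n(s)$ gives $\tilde a_n \geq 1$, hence $\liminf a_n \geq 1$, while Lemma~\ref{lem:6} gives $a_{n+1} \leq \exp(C \log n/n + (1+\ln 2)n^2/(n+1)^2)$, and letting $n \to \infty$ yields $\limsup a_n \leq e^{1+\ln 2} = 2e$.

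The main task is to prove convergence, i.e.\ that $L := \limsup_n a_n$ equals $\ell := \liminf_n a_n$. Fix $\eps > 0$. Since $L < \infty$, one can choose $m$ arbitrarily large with $a_m > L - \eps$; in addition, because $\log m/m \to 0$, we may choose $m$ large enough that $C\log m/m < \eps$. The strategy is then to show that for every large $n$ we have $a_n \geq (L-\eps)(1-O(\eps)) - o(1)$, by first jumping up to a nearby multiple of $m$ and then coming back down.

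Concretely, given $n$ large, let $n_3$ be the least multiple of $m$ with $n_3 \geq n$, so that $n \leq n_3 \leq n+m-1$. First apply Lemma~\ref{lem:3} (which applies because $m \mid n_3$) to obtain
\[
\tilde a_{n_3} \geq \tilde a_m \bigl(1 + C\log m/m\bigr)^{-1} \geq \tilde a_m \bigl(1 - C\log m/m\bigr).
\]
Converting from $\tilde a$ to $a$ on both sides using the equivalence from the preliminaries gives $a_{n_3} \geq a_m(1 - C\log m/m)^2 \bigl(1 - C\log n_3/n_3\bigr)$. Next apply Lemma~\ref{lem:4} with $n_2 = n$ and $n_3$ as above:
\[
a_n \geq a_{n_3}\left(1 - \frac{C(n_3-n)\log n_3}{n_3}\right) \geq a_{n_3}\left(1 - \frac{Cm\log n_3}{n_3}\right).
\]
Since $m$ is held fixed while $n \to \infty$, the factor $Cm\log n_3/n_3$ tends to zero. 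Combining the two inequalities and using $a_m > L - \eps$ along with $C\log m/m < \eps$ yields
\[
a_n \geq (L - \eps)(1-\eps)^2 \bigl(1 - C\log n_3/n_3\bigr)\bigl(1 - Cm\log n_3/n_3\bigr).
\]
Taking $\liminf_{n\to\infty}$ gives $\ell \geq (L - \eps)(1 - \eps)^2$, and letting $\eps \to 0$ forces $\ell \geq L$. Since $\ell \leq L$ trivially, $\ell = L$, and the limit $\lim_n a_n$ exists and lies in $[1, 2e]$.

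The only delicate point is bookkeeping the many small multiplicative error terms: one must verify that the slack $(1-C\log m/m)$ from Lemma~\ref{lem:3}, the slack $(1-Cm\log n/n)$ from Lemma~\ref{lem:4}, and the two $P$-versus-$\tP$ corrections can all be absorbed into $(1-O(\eps))$ for the chosen $m$ and all sufficiently large $n$. That is routine once $m$ is fixed so that $C \log m/m < \eps$ and $n$ is chosen large enough relative to $m$ that $Cm \log n/n < \eps$.
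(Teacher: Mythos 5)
Your proof is correct and follows essentially the same strategy as the paper's: establish the $[1,2e]$ bracketing from Lemmas~\ref{lem:3} and~\ref{lem:6}, then for a fixed small index jump up to a nearby multiple via Lemma~\ref{lem:3} and come back down via Lemma~\ref{lem:4} to compare a generic $a_n$ against the small index. The paper runs the same two-step bound with $n_1$ in the role of your $m$, concluding directly that $\limsup_{n_1} a_{n_1} \leq \liminf_{n_2} a_{n_2}$ rather than passing through the explicit $\eps$-bookkeeping; the content is identical.
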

\begin{proof}
By Lemma~\ref{lem:3} and Lemma~\ref{lem:6}, \beq 1 \leq  \liminf\limits_{n \ra \infty} |P_n(s)|^{\frac{1}{n^2}} \leq \limsup\limits_{n \ra \infty} |P_n(s)|^{\frac{1}{n^2}} \leq 2e.\eeq

Let $C < n_1^2  \leq n_2.$ Let $n_3 = (\lfloor \frac{n_2}{n_1}\rfloor + 1)  n_1.$ 
By Lemma~\ref{lem:3} and Lemma~\ref{lem:4},
\beqs |P_{n_1}(s)|^{\frac{1}{n_1^2}} & \leq &  |P_{n_3}(s)|^{\frac{1}{n_3^2}}\left(1 + \frac{C\log n_1}{n_1}\right)\\
& \leq & |P_{n_2}(s)|^{\frac{1}{n_2^2}}\left(1 - \frac{C (n_3 - n_2) \ln n_3}{n_3}\right)^{-1}\left(1 + \frac{C\log n_1}{n_1}\right)\\
& \leq & |P_{n_2}(s)|^{\frac{1}{n_2^2}}\left(1 - \frac{C n_1 \ln n_3}{n_3}\right)^{-1}\left(1 + \frac{C\log n_1}{n_1}\right)\\ 
& \leq & |P_{n_2}(s)|^{\frac{1}{n_2^2}}\left(1 - Cn_1 \left(\frac{\ln n_1^2}{n_1^2}\right)\right)^{-1}\left(1 + \frac{C\log n_1}{n_1}\right).\eeqs
This implies that \beqs |P_{n_2}(s)|^{\frac{1}{n_2^2}} \geq |P_{n_1}(s)|^{\frac{1}{n_1^2}}\left(1 - \frac{C\log n_1}{n_1}\right).\eeqs
As a consequence, \beqs \left(1 + \frac{C\log n_1}{n_1}\right)\liminf\limits_{n_2 \ra \infty} |P_{n_2}(s)|^{\frac{1}{n_2^2}} \geq |P_{n_1}(s)|^{\frac{1}{n_1^2}}. \eeqs Finally, this gives 
\beqs \liminf\limits_{n_2 \ra \infty} |P_{n_2}(s)|^{\frac{1}{n_2^2}} \geq \limsup\limits_{n_1 \ra \infty}|P_{n_1}(s)|^{\frac{1}{n_1^2}}, \eeqs implying 
\beqs 1 \leq  \liminf\limits_{n \ra \infty} |P_n(s)|^{\frac{1}{n^2}} = \lim\limits_{n \ra \infty} |P_n(s)|^{\frac{1}{n^2}} = \limsup\limits_{n \ra \infty} |P_n(s)|^{\frac{1}{n^2}} \leq 2e.\eeqs
\end{proof}
Together with the concavity of $f_n:= |P_n(s)|^{\frac{1}{n^2-1}}$, this implies the following.
\begin{cor}\lab{cor:lip}
Let $\eps > 0$. For all sufficiently large $n$, for all $s$ and $t$ in $\R_+^3$, \beqs |f_n(s) - f_n(t)| < (2e + \eps)|s - t|.\eeqs
\end{cor}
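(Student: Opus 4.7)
The plan is to combine the concavity of $f_n$ with a coordinatewise upper bound $f_n(s)\le (e+o_n(1))\min(s_0,s_1,s_2)$ derived from Lemma~\ref{lem:7} and the rotational symmetry of the lattice. First I would record three structural properties of $f_n$ on $\R_+^3$: it is concave (which follows from the inclusion $\alpha P_n(s)+(1-\alpha)P_n(t)\subseteq P_n(\alpha s+(1-\alpha)t)$ and the Brunn--Minkowski inequality applied to these $(n^2-1)$-dimensional polytopes), it is $1$-homogeneous (since $P_n(cs)=cP_n(s)$ for $c>0$), and it is monotone in each coordinate (since $P_n(s)\subseteq P_n(t)$ whenever $s\le t$ componentwise).

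The second step upgrades Lemma~\ref{lem:7} to the desired coordinatewise bound. Lemma~\ref{lem:6} combined with Lemma~\ref{lem:7} yields $f_n(s)\le 2e+o_n(1)$ on the slice $\{s_0=2\}$. The rotation of $\mathbb L$ by $2\pi/3$ preserves $n\mathbb L$ and cyclically permutes the edge classes $E_0,E_1,E_2$, inducing a volume preserving affine bijection from $P_n(s_0,s_1,s_2)$ to $P_n(s_2,s_0,s_1)$; hence the same asymptotic bound holds on each slice $\{s_i=2\}$. Combined with $1$-homogeneity, $f_n(s)\le (e+o_n(1))\,s_i$ for each $i\in\{0,1,2\}$, and in particular $f_n(s)\le (e+o_n(1))\min_i s_i$.

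Since a concave function on a convex open set is locally Lipschitz, Rademacher's theorem gives differentiability of $f_n$ almost everywhere in the interior of $\R_+^3$. At a differentiable point $u$, Euler's identity for $1$-homogeneous functions gives $u\cdot\nabla f_n(u)=f_n(u)$, and monotonicity gives $\partial_i f_n(u)\ge 0$ for each $i$. Combined with the coordinatewise upper bound,
\[
u_i\,\partial_i f_n(u)\;\le\;\sum_j u_j\,\partial_j f_n(u)\;=\;f_n(u)\;\le\;(e+o_n(1))\,u_i,
\]
so $\partial_i f_n(u)\le e+o_n(1)$ for each $i$, whence $\|\nabla f_n(u)\|_2\le\sqrt{3}\,(e+o_n(1))<2e+\eps$ for all sufficiently large $n$. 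Integrating along line segments (and using continuity of $f_n$ up to the boundary of the orthant, which follows from the upper bound) yields $|f_n(s)-f_n(t)|<(2e+\eps)|s-t|$ for all $s,t\in\R_+^3$. The main subtle point is the coordinatewise bound, which essentially uses the cyclic symmetry of the lattice to propagate the $\{s_0=2\}$ bound to each coordinate slice; once it is in place, the Euler/Rademacher step is routine.
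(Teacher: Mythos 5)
Your proof is correct, and it takes a somewhat different route from the paper's. The paper's argument (given in one sentence) considers the line through $s$ and $t$: restricting $f_n$ to the chord $\{\gamma(\lambda)\}=\{s+\lambda(t-s)/|t-s|\}\cap\R_+^3$ gives a one-variable concave function vanishing at the endpoints $\lambda=a,b$; one then bounds the slope at any $\lambda$ by $g(\lambda)/\min(\lambda-a,\,b-\lambda)$ and uses that $\min_i\gamma(\lambda)_i\le\min(\lambda-a,\,b-\lambda)$ (since the exit coordinate $i_0$ at each end satisfies $\gamma(\lambda)_{i_0}=|v_{i_0}|(b-\lambda)\le b-\lambda$). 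Combined with the same coordinatewise estimate $f_n(s)\le(e+o(1))\min_i s_i$ that you prove, this yields a Lipschitz constant of $e+o(1)$. You instead bound the gradient coordinatewise via Euler's identity $u\cdot\nabla f_n(u)=f_n(u)$ and monotonicity, then appeal to Rademacher; this gives $\|\nabla f_n\|_2\le\sqrt{3}(e+o(1))$, which is weaker by a factor of $\sqrt3$ but still comfortably below $2e+\eps$. The real content, which the paper's proof leaves implicit, is the coordinatewise upper bound; you make it explicit via the rotation symmetry, and in fact the proof of Lemma~\ref{lem:6} uses only $s_0=2$ (not the ordering $s_0\le s_1\le s_2$), so the cyclic symmetry alone, together with $1$-homogeneity and monotonicity, does give $f_n(s)\le(e+o(1))\min_i s_i$ uniformly in $s$ as you need. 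One small point worth stating explicitly is uniformity of the $o_n(1)$: the bound in Lemma~\ref{lem:6} depends only on $n$ and not on $s_1,s_2$, which is what lets the Lipschitz estimate be uniform over $\R_+^3$ rather than merely local.
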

\begin{proof}
Consider the line through $s$ and $t$. The corollary follows by the concavity of $f_n$  on the intersection of this line with $\R_+^3$, the fact that $f_n$ tends to $0$ on the boundary of $\R_+^3$, and Lemma~\ref{lem:3}.
\end{proof}
\begin{cor}
The pointwise limit of the functions $f_n$ is a function $f$ that is  $2e$ Lipschitz and concave.
\end{cor}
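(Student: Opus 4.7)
The plan is to assemble the corollary from three ingredients already supplied by the preceding lemmas and corollary: a pointwise limit, a uniform Lipschitz bound, and preservation of concavity.

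First I would establish the pointwise convergence of $f_n$. Lemma~\ref{lem:7} gives that $|P_n(s)|^{1/n^2}$ converges to a limit $f(s) \in [1, 2e]$ for every admissible $s$. Since
\[
f_n(s) = |P_n(s)|^{\frac{1}{n^2-1}} = \bigl(|P_n(s)|^{1/n^2}\bigr)^{\frac{n^2}{n^2-1}},
\]
and the base stays in the compact interval $[1,2e]$ while the exponent tends to $1$, we get $f_n(s) \to f(s)$ pointwise.

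Next I would transfer the Lipschitz estimate. Corollary~\ref{cor:lip} says that for every $\eps > 0$ and every sufficiently large $n$, the function $f_n$ is $(2e+\eps)$-Lipschitz on $\R_+^3$ (in the range of $s$ under consideration). Since pointwise limits preserve Lipschitz bounds, for any two points $s, t$ one has
\[
|f(s) - f(t)| = \lim_{n\to\infty} |f_n(s) - f_n(t)| \leq (2e+\eps)\,|s-t|.
\]
Letting $\eps \to 0$ yields that $f$ is $2e$-Lipschitz.

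For concavity, the hypothesis preceding the corollary takes for granted that each $f_n$ is concave (as was used already in deriving Corollary~\ref{cor:lip}). Concavity is preserved under pointwise limits: for any $s, t$ in the domain and $\lambda \in [0,1]$,
\[
f(\lambda s + (1-\lambda) t) = \lim_{n\to\infty} f_n(\lambda s + (1-\lambda) t) \geq \lim_{n\to\infty} \bigl(\lambda f_n(s) + (1-\lambda) f_n(t)\bigr) = \lambda f(s) + (1-\lambda) f(t),
\]
so $f$ is concave. There is no substantive obstacle here; the only point requiring a bit of care is the passage from the exponent $1/n^2$ in Lemma~\ref{lem:7} to the exponent $1/(n^2-1)$ used in defining $f_n$, which is handled by the boundedness of $|P_n(s)|^{1/n^2}$ noted in the first step.
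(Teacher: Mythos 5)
Your proposal is correct and follows the paper's own (terser) reasoning: the paper simply cites Corollary~\ref{cor:lip} together with pointwise convergence of the $f_n$. You supply somewhat more detail, in particular the clean observation that $f_n(s) = \bigl(|P_n(s)|^{1/n^2}\bigr)^{n^2/(n^2-1)}$, so the convergence established in Lemma~\ref{lem:7} for the exponent $1/n^2$ transfers to the exponent $1/(n^2-1)$ because the base is bounded away from $0$ and $\infty$ while the exponent tends to $1$; this step is implicit in the paper.
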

\begin{proof}
This follows from Corollary~\ref{cor:lip} and the pointwise convergence of the $f_n$ to a function $f$.
\end{proof}
\subsection{Surface area of facets of $P_n(s)$}

\begin{lem} \lab{lem:ess} There is a universal constant $C > 1$ such that 
for all sufficiently large $n$, the surface area of a codimension $1$ facet of $P_n(s)$ corresponding to $E_i(\T_n)$ is bounded below by $\left(\frac{s_0}{Cs_2}\right)^{\frac{Cs_i}{s_0}} |P_n(s)|^{1 - \frac{1}{n^2 - 1}}.$
\end{lem}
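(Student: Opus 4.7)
The plan is to apply the anisotropic isoperimetric inequality (\ref{eq:2.2}) to $L = P_n(s)$ with an auxiliary body $K = P_n(t)$ tuned to extract the contribution of the $E_i(\T_n)$-facets. Two preliminary observations reduce the problem: first, translation symmetry of $\T_n$ implies that all $n^2$ facets of $P_n(s)$ coming from edges in $E_j(\T_n)$ have the same $(n^2-2)$-dimensional Lebesgue measure $A_j$; second, the support function of $P_n(t)$ in the outward unit normal direction of any $E_j$-facet equals $t_j/2$, so
\[
S_{P_n(t)}(P_n(s)) \;=\; \tfrac{n^2}{2}\bigl(t_0 A_0 + t_1 A_1 + t_2 A_2\bigr).
\]
Combined with (\ref{eq:2.2}), this gives a family of lower bounds
\[
t_0 A_0 + t_1 A_1 + t_2 A_2 \;\geq\; \tfrac{2(n^2-1)}{n^2}\,|P_n(t)|^{\frac{1}{n^2-1}}\,V^{1-\frac{1}{n^2-1}}
\]
parameterized by $t \in \R_+^3$, where $V := |P_n(s)|$.

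In parallel I would establish the companion upper bound $A_j \leq \tfrac{2(n^2-1)}{n^2 s_j}\,V$, obtained from the identity $\partial V/\partial s_j = \tfrac{n^2}{2} A_j$, the concavity of $V^{1/(n^2-1)}$ as a function of $s$ (Brunn--Minkowski applied to the linear dependence $P_n(\alpha s + (1-\alpha)s') \supseteq \alpha P_n(s) + (1-\alpha) P_n(s')$), and the vanishing $V(s - s_j e_j) = 0$ implicit in Lemma~\ref{lem:2.3} (zeroing one coordinate of $s$ forces the polytope into a proper subspace of the relevant hyperplane). Together with the concavity of $f_n := |P_n(\cdot)|^{1/(n^2-1)}$ from Corollary~\ref{cor:lip}, this concavity yields a lower bound of the form $|P_n(t)|^{1/(n^2-1)} \geq (\min_j t_j/s_j)\,|P_n(s)|^{1/(n^2-1)}$ by writing $t$ as a convex combination of $s$ and a boundary point of $\R_+^3$ and invoking $f_n = 0$ at the boundary.

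The final step is to choose $t$ to isolate the $i$-th coordinate. For $i = 0$, taking $t$ proportional to $(1, \lambda, \lambda)$ with $\lambda \asymp s_0/s_2$ allows the off-diagonal terms $t_{i'} A_{i'}$ to be absorbed, after combining the bounds above, into a single multiplicative factor of order $s_0/(Cs_2)$. For $i \in \{1,2\}$, the exponent $Cs_i/s_0$ reflects an iterative refinement: reapplying the same scheme after shifting the reference weights by $s_0$ in the $i$-direction peels off one additional layer of $E_{i'}$-contribution at the cost of another factor of $s_0/(Cs_2)$, and $\lceil Cs_i/s_0\rceil$ iterations produce the stated exponent. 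The principal technical obstacle is that $|P_n(t)|^{1/(n^2-1)}$ degenerates as coordinates of $t$ approach $0$, so the parameters must be balanced so that the auxiliary volume lower bound survives the $1/(n^2-1)$-th root; the concavity and Lipschitz estimates in Corollary~\ref{cor:lip} provide exactly the quantitative control needed to make this balance work and to arrive at the base $s_0/(Cs_2)$ and exponent $Cs_i/s_0$ claimed in the lemma.
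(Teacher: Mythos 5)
Your approach is genuinely different from the paper's, which proves the facet-area lower bound probabilistically: after rescaling so $|P_n(s)|=1$, the paper constructs explicit vertex sets $T$, $S$, $U$ near one constraint, defines events $\Lambda_T$, $\Lambda_S$ confining the random point near the facet, and bounds $\p[\Lambda_T\wedge\Lambda_S]$ from below using Pr\'ekopa's log-concavity theorem, Fradelizi's centroid-density bound, Vaaler's cube-section theorem, and the exponential-entropy Lemma~\ref{lem:5}. You instead try to extract $A_i$ from the anisotropic isoperimetric inequality by subtraction.

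Unfortunately, the subtraction step does not close. You correctly get, for $t\in\R_+^3$ and $m=n^2-1$, writing $V=|P_n(s)|$,
\[
\frac{n^2}{2}\sum_j t_j A_j \;\geq\; m\,|P_n(t)|^{1/m}\,V^{1-1/m}
\quad\text{and}\quad
A_j \;\leq\; \frac{2m}{n^2 s_j}\,V .
\]
To isolate $A_0$ with $t=(1,\lambda,\lambda)$, one needs
$|P_n(t)|^{1/m}/|P_n(s)|^{1/m}$ to strictly exceed $\lambda\bigl(s_1^{-1}+s_2^{-1}\bigr)$.
But your convexity bound gives only $|P_n(t)|^{1/m}\geq(\min_j t_j/s_j)\,|P_n(s)|^{1/m} = (\lambda/s_2)\,|P_n(s)|^{1/m}$ (since $\lambda<1$ and $s_2\geq s_1\geq s_0$), while the right side is at least $\lambda/s_2$. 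These two quantities coincide up to constant factors, and the subtraction of \emph{both} off-facet terms leaves a residual with the wrong sign: concretely,
\[
A_0 \;\geq\; \frac{2m}{n^2}\Bigl(\frac{\lambda}{s_2} - \frac{\lambda}{s_1} - \frac{\lambda}{s_2}\Bigr)V \;=\; -\frac{2m\lambda}{n^2 s_1}\,V,
\]
which is vacuous. Even with the sharper two-sided bound $c\min_j t_j\leq |P_n(t)|^{1/m}\leq 2e\min_j t_j$ available from Lemma~\ref{lem:7}, the isoperimetric lower bound is at most $\asymp\lambda V/s_0$ while the subtracted term is also $\asymp \lambda V/s_0$, and the constants run the wrong way ($c\leq 1$, $2e>1$), so no choice of $\lambda$ rescues the sign. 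The same obstruction propagates through the proposed iteration for $i\in\{1,2\}$, since each iterate is again a nonpositive residual. The underlying difficulty is structural: Euler's identity already forces $\sum_j s_j A_j\asymp V$, so any attempt to pin down a single $A_i$ purely from weighted sums of the $A_j$'s and the concavity of $f_n$ needs strictly more information than the isoperimetric inequality supplies; the paper's proof circumvents this by working directly with the uniform measure on $P_n(s)$ rather than with aggregate surface-area functionals.
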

\begin{proof}
 Let $s$ be rescaled by scalar multiplication so that $|P_n(s)| = 1$. Knowing that $|P_n(s)|^{\frac{1}{n^2-1}}$ exists and and has a limit and lies in $[s_0, 2e s_0]$ , we see that $|P_n(s)|^{1 - \frac{1}{n^2 - 1}} \in [\frac{1}{2es_0}, \frac{1}{s_0}].$ Let $F_i$ denote a codimension $1$ facet corresponding to an edge in $E_i(\T_n)$. 
For all sufficiently small $\eps > 0$, we will find a lower bound on the probability that there exists a point $y \in F_i$ such that $\|y - x\|_{\ell_2} < \eps$, when $x$ is sampled at random from $P_n(s)$.
We identify $V(\T_n)$ with  $\Z/n\Z \times \Z/n\Z$ via the unique $\Z$ module isomorphism that maps $[\omega^i]$ to $(1, 0)$ and $[\omega^i \exp(\frac{\pi\imath}{3})]$ to $(0, 1)$. This causes the edges obtained by translating  $\{(0,0), (1, 0), (1, 1), (0,1)\}$ to belong to $E_i(\T_n)$. We further identify $\Z/n\Z \times \Z/n\Z$  with the subset of $\Z^2$ having coordinates in $(-\frac{n}{2}, \frac{n}{2}]$.  Let $T$ be the set of vertices contained in the line segment $\{(a, b)|(a = b) \,\mathrm{and}\,  (|a| \leq \frac{3s_i}{s_0})\}.$ Let $S$ be the set of all  lattice points (vertices) within the convex set $\{(a, b)|(|a -b| \leq 3) \,\mathrm{and} \, (|a + b| \leq \frac{6s_i}{s_0} + 3)\}$ that do not belong to $T$. Without loss of generality, we assume that $F_i$ corresponds to the constraint  $- x(0,0) + x(1, 0) -  x(1, 1) +  x(0,1) \leq s_i$. Let $conv(X)$ be used to denote the convex hull of $X$ for a set of bounded diameter. Let $U = \{u_{-2}, u_{-1}, u_0\}$ be a set of three adjacent vertices not contained in $S\cup T$, but such that exactly two of these vertices are respectively adjacent to  two distinct vertices in $S$. That such a $U$ exists follows from the presence of long line segments in the boundary of $conv(S\cup T)$. Given $x \in P_n(s)$, we define $x_{lin}:conv(U\cup S\cup T) \ra \R$ to be the unique  affine map from the convex hull of $U\cup S\cup T$ to $\R$ which agrees with the values of $x$ on $U$. The function $x_{lin}$ will serve as a baseline for the measurement of fluctuations.  Let $\Lambda_T$ denote the event that 
$\forall (a, a) \in T,$ 

$$ \left|x((a, a)) - x_{lin}((a, a)) - \min\left( \frac{  \left(|a-\frac{1}{2}| - \frac{1}{2}\right)s_0 -  2 s_i}{2}, 0\right)\right|  \leq  \frac{s_0}{20}.$$ 
Let $\Lambda_S$ be the event that for each vertex $v \in S,$ we have   $$-\frac{s_0}{100} \leq x(v)  - x_{lin}(v)  \leq  \frac{s_0}{100}.$$  Let $x_S$ denote the restriction of $x$ to $S$, and likewise define $x_T$, $x_{S\cup T}$ etc.  Let the  cube in $\R^S$ corresponding  to the event $\Lambda_S$ be denoted $Q_S$. Let the polytope in  $\R^T$ corresponding to the event $\Lambda_T$ be denoted $Q_T$. Note that $Q_T$ implicitly depends on $x_S$, but only through the effect of the one constraint $F_i$.
Let $z_S$ be a point in   $[-\frac{s_0}{100}, \frac{s_0}{100}]^S.$ Due to a double layer of separation between $T$ and $V(\T_n)\setminus S$, conditioned on $x_S$ being equal to  $z_S$, the distribution of $x_T$ is independent of the distribution of $x_{V(\T_n)\setminus S}.$ Also, conditioned on $x_s = z_s$, the distribution of $x_T$ is the uniform distribution on a $|T|$ dimensional truncated cube, of sidelength $\frac{s_0}{10}$, the truncation being due the linear constraint $$\langle x_T, \zeta_S\rangle  \geq x((1, 0)) + x((0, 1))- s_i$$ imposed by $F_i$, where $\zeta_S$ is a  vector in $\R^T$ (taking values $1$ on $\{(0,0), (1, 1)\}$ each and $0$ elsewhere). The euclidean distance of the center of this cube to $F_i$ is less than $\frac{s_0}{50}$, so together with Vaalar's theorem \cite{Vaaler} bounding the volume of a central section of a unit cube  from below by $1$, we see that  conditioned on  $\Lambda_T$ and $\Lambda_S$,  the probability that the distance of $x$ to $F_i$ is less than $\eps$ is at least 
$\eps 2^{-|T|}$ for all sufficiently small $\eps$. It remains for us to obtain a positive lower bound on $\p[\Lambda_S \, \mathrm{and} \, \Lambda_T]$ that is independent of $n$ for sufficiently large $n$.
Note that \beq \p[\Lambda_S \, \mathrm{and} \, \Lambda_T] =  \p[\Lambda_T| \Lambda_S]\p[\Lambda_S].\eeq
Let $\mu_{\Lambda_S}$ denote the conditional probability distribution of $x_S$ (supported on $Q_S$) given $\Lambda_S$.
\beqs \p[\Lambda_T| \Lambda_S] & = & \int \p[x_T \in Q_T| x_S = z_S ]\mu_{\Lambda_S}(dz_S)\\
& \geq & \inf_{z_S \in Q_S}\p[x_T \in Q_T| x_S = z_S ].\eeqs
Let $z_S \in Q_S$. Then, the conditional distribution of $x_T$ given that $x_S = z_S$ is the uniform (with respect to Lebesgue) measure on a polytope that is contained in the set of all vectors in  $\R^T$ which when augmented with $z_S$ are $2s_0$ Lipschitz when viewed as functions on ${S\cup T}.$ The latter polytope has volume at most $(4s_0)^{|T|}.$ Since $Q_T$, for any $z_S$, contains a unit cube of side length $s_0/100$, 
\beq  \p[\Lambda_T| \Lambda_S] & \geq & \inf_{z_S \in Q_S}\p[x_T \in Q_T| x_S = z_S ] \geq 400^{-|T|}.\eeq
Finally, we obtain a lower bound on $\p[\Lambda_S].$ 
We say that a vertex $v \in S$ is reachable from $U$ if there is a sequence of  vertices $u_{-2}, u_{-1}, u_0, v_1, \dots, v_k = v$ such that any $4$ consecutive vertices form an edge in $E(\T_n)$ and $v_0, \dots, v_k \in S$. By our construction of $U$, every vertex in $S$ is reachable from $U$, and the length of the path is at most $2|T| + 10$. Consider the values  of $x - x_{lin}$ on $S$. These values cannot exceed $(2|T| + 10)s_2$. Their mean is $0$. Their joint distribution has a density $g_S$ that is logconcave by Pr\'{e}kopa's Theorem~\ref{thm:prekopa}. The probability that $(x - x_{lin})_S$ lies in a translate of $Q_S$ by $t$ is equal to the value of the convolution of $g_S$ with the indicator $I(Q_S)$ of $Q_S$ at $t$. Multiplying by $\left(\frac{50}{s_0}\right)^{|S|}$ (to have unit $L_1$ norm), it follows that each coordinate in any point of the support of $\left(\frac{50}{s_0}\right)^{|S|} I(Q_S) \ast g$ is bounded above by $(2|T| + 12)s_i$, while the mean of this distribution continues to be $0$. The (differential) entropy of $g$ is bounded above by the sum of the entropies of its one dimensional marginals along coordinate directions, which in turn is bounded above by $\ln\left(2e(2|T| + 11)s_2\right)$ by Lemma~\ref{lem:5}. It follows that the supremum of the density of $\left(\frac{50}{s_0}\right)^{|S|} I(Q_S) \ast g$ is at least $\left(2e(2|T| + 12)s_2\right)^{-|S|}$. It is a theorem of Fradelizi \cite{Fradelizi} that the density at the center of mass of a logconcave density on $\R^{|S|}$ is no less than $e^{- |S|}$ multiplied by the supremum of the density. Applied to  $ I(Q_S) \ast g$, this implies that  $$\p[\Lambda_S] \geq \left(100e^2(2|T| + 11)\left(\frac{s_2}{s_0}\right)\right)^{-|S|}.$$
This shows that there is a universal constant $C > 1$ such that 
for all sufficiently large $n$, the surface area of a codimension $1$ facet of $P_n(s)$ corresponding to $E_i(\T_n)$ is bounded below by $\left(\frac{s_0}{Cs_2}\right)^{\frac{Cs_i}{s_0}} |P_n(s)|^{1 - \frac{1}{n^2 - 1}}.$
\end{proof}
By known results on vector partition functions \cite{Brion}, $P_n(s)$ is a piecewise polynomial function of $s$, and each domain of polynomiality is a closed  cone known as a chamber of the associated vector partition function. For a different perspective, see also Lemma $2$ of \cite{Klartag}. It follows by scaling, that these polynomials are homogenous, of degree $n^2 - 1$. Further in the cone $\min(s_0, s_1, s_2) > 0$, $|P_n|$ is $\C^1$ (\ie continuously differentiable) by Lemma~\ref{lem:ess}. 
 

Let \beq\lab{eq:wn}\frac{1}{n^2} \left(\frac{\partial|P_n(s)|}{\partial s_0}, \frac{\partial|P_n(s)|}{\partial s_1}, \frac{\partial|P_n(s)|}{\partial s_2}\right) =: (w_0^{(n)}, w_1^{(n)}, w_2^{(n)}).\eeq

\begin{lem} \lab{lem:surf_upperbd} Fix $s$ with $0 < s_0 \leq s_1 \leq s_2$ and $\eps > 0$, for all sufficiently large $n$, the surface area of a codimension $1$ facet of $P_n(s)$ corresponding to $E_i(\T_n)$ is bounded above  by $\left(\frac{(2e +\eps) s_0}{s_i}\right)|P_n(s)|^{1 - \frac{1}{n^2 - 1}}.$
\end{lem}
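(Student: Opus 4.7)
The plan is to convert the claimed surface-area bound into an upper bound on a partial derivative of $|P_n(s)|$, and then obtain that bound from the $1$-homogeneity of $f_n(s):=|P_n(s)|^{1/(n^2-1)}$ combined with Lemma~\ref{lem:7}. The argument is a direct complement to Lemma~\ref{lem:ess}: bounding $\partial f_n/\partial s_i$ from above via concavity is much easier than bounding it from below via an anisotropic isoperimetric inequality.

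First I would note that by the translation symmetry of $\T_n$, the $n^2$ facets of $P_n(s)$ indexed by $E_i(\T_n)$ are all congruent, each of $(n^2-2)$-dimensional measure $S_i$. Each such facet is cut out by a constraint of the form $-x(a)+x(b)-x(c)+x(d)\leq s_i$, whose coefficient vector has four entries $\pm 1$, has Euclidean norm $2$, and (having coordinate sum $0$) lies in the ambient affine span $\{\sum_v x(v)=0\}$ of $P_n(s)$. The standard perturbation formula for polytope volumes therefore gives $\partial |P_n(s)|/\partial s_i = n^2 S_i/2$, i.e.\ $S_i = 2 w_i^{(n)}$, and differentiating $|P_n(s)|=f_n(s)^{n^2-1}$ converts this into
$$S_i \;=\; \frac{2(n^2-1)}{n^2}\,|P_n(s)|^{1-1/(n^2-1)}\,\frac{\partial f_n}{\partial s_i}(s) \;\leq\; 2\,|P_n(s)|^{1-1/(n^2-1)}\,\frac{\partial f_n}{\partial s_i}(s).$$
It therefore suffices to show $\partial f_n(s)/\partial s_i \leq (e+o(1))\,s_0/s_i$.

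For this reduced inequality I would use that $|P_n|$ is homogeneous of degree $n^2-1$ in $s$ (as noted after Lemma~\ref{lem:ess}), so $f_n$ is $1$-homogeneous, and Euler's identity gives $s_0\frac{\partial f_n}{\partial s_0}+s_1\frac{\partial f_n}{\partial s_1}+s_2\frac{\partial f_n}{\partial s_2}=f_n(s)$. Since $P_n(s)$ is monotone in $s$, all three partials are non-negative, which forces $s_i\frac{\partial f_n}{\partial s_i}\leq f_n(s)$; it therefore suffices to show $f_n(s)\leq (e+o(1))s_0$. The key observation is that the cap $x_i-x_{E(i)}-x_{S(i)}+x_{SE(i)}\leq s_0=2$ used in the entropy estimate~(\ref{eq:ln2}) of Lemma~\ref{lem:6} does not involve $s_1$ or $s_2$, so the bound $|P_n(s)|^{1/n^2}\leq 2e+o(1)$ coming from Lemmas~\ref{lem:6} and~\ref{lem:7} holds uniformly in $(s_1,s_2)$ when $s_0=2$. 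Rescaling via $1$-homogeneity then upgrades this to $f_n(s)\leq (e+o(1))s_0$ throughout the positive cone, completing the plan.

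I do not anticipate a genuine obstacle: the argument is essentially bookkeeping built on top of Lemmas~\ref{lem:ess}, \ref{lem:6}, \ref{lem:7} and Euler's identity. The single point that requires care is verifying that the $2e$ bound of Lemma~\ref{lem:7} really is uniform in $(s_1,s_2)$ when $s_0=2$, and this is immediate from inspecting the conditional-entropy calculation at~(\ref{eq:ln2}).
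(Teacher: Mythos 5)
Your proposal is correct and follows essentially the same route as the paper: Euler's identity for the $(n^2-1)$-homogeneous volume $|P_n(s)|$, nonnegativity of the partial derivatives $\partial|P_n|/\partial s_i$, and the asymptotic volume bound from Lemma~\ref{lem:7}. Your write-up is in fact more careful than the paper's, explicitly tracking the factor of $2$ relating the Euclidean facet area $S_i$ to $w_i^{(n)}=\tfrac1{n^2}\partial|P_n|/\partial s_i$; the paper bounds $w_i^{(n)}$ but states the conclusion for $S_i$, which only works out because its intermediate estimate $|P_n(s)|^{1/(n^2-1)}\leq(2e+\eps)s_0$ has a compensating factor of $2$ of slack relative to the sharp $(e+o(1))s_0$ that Lemma~\ref{lem:7} and homogeneity actually give.
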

\begin{proof} Note that $$\sum_i \left(1 - \frac{1}{n^2}\right)^{-1} s_i \wn_i = |P_n(s)|,$$ which in turn is bounded above by $(2e +\eps) s_0|P_n(s)|^{1 - \frac{1}{n^2 - 1}}$ for sufficiently large $n$.
It follows for each $i \in \{0, 1, 2\}$, that $w_i^{(n)}$ is bounded above by  $\left(\frac{(2e +\eps) s_0}{s_i}\right)|P_n(s)|^{1 - \frac{1}{n^2 - 1}}.$ This completes the proof of this lemma.
\end{proof}
\subsection{Bounds on the $\ell_p$ norm of a point in $P_n(s)$}

Our strategy will be to cover the set of points in $P_n(s)$ that are far from the origin  by a small number of polytopes, each of which is of small volume.
\begin{lem}\lab{lem:infty}
Suppose that $\eps_0 > 0$ and $2 = s_0 \leq s_1 \leq s_2.$ Let $x \in P_n(s)$ be such that 
$\|x\|_{\ell_\infty} \geq \eps_0 n^2.$ Then, for any $p \in [1, \infty)$,
\beq\|x\|_{\ell_p} \geq \left(\frac{\sqrt{3}\eps_0 n}{8 s_2}\right)^{\frac{2}p}\left(\frac{\eps_0 n^2}{2}\right).\eeq
\end{lem}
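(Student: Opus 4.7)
The plan is to show that when $|x(v_0)| \geq \eps_0 n^2$, the function $|x|$ remains at least $\eps_0 n^2/2$ on a lattice neighborhood of $v_0$ containing at least $\left(\frac{\sqrt{3}\eps_0 n}{8 s_2}\right)^2$ vertices; summing $|x(v)|^p$ over these vertices then yields the claimed $\ell_p$ bound. Since the Lipschitz argument below applies uniformly, the two cases $x(v_0) \geq \eps_0 n^2$ and $x(v_0) \leq -\eps_0 n^2$ are handled together.

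The heart of the argument is a lattice Lipschitz estimate $|x(v) - x(v')| \leq L \|v - v'\|_{\ell_1}$ with $L := \frac{4 s_2 n}{\sqrt{3}}$, where $\|\cdot\|_{\ell_1}$ refers to the $\{1, \omega\}$ lattice basis. I would first prove a Lipschitz bound along each lattice direction. Fix $e \in \{1, \omega, 1+\omega\}$ and set $\phi(t) := x(v_0 + te)$, a periodic function of period $n$. Since $f := x + q(s)$ has discretely concave piecewise-linear extension --- a consequence of $\nabla^2 x \preccurlyeq s$ together with $\nabla^2 q(s) = -s$ from Lemma~\ref{lem:2.3} --- applying $1$D concavity of $f$ to the collinear triple $v_0+(t-1)e, v_0+te, v_0+(t+1)e$ yields the one-sided inequality $\phi(t-1) - 2\phi(t) + \phi(t+1) \leq -\partial_e^2 q$, where $\partial_e^2 q$ is the directional second derivative of the quadratic $q(s)$. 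A direct computation from the three defining equations $\nabla_i^2 q = -s_i$ for $i = 0, 1, 2$ gives $\partial_e^2 q \in \{-(s_0+s_1), -(s_1+s_2), -(s_0+s_2)\}$ for the three lattice directions, so $-\partial_e^2 q \leq s_1 + s_2 \leq 2 s_2$ in each case. Hence the first differences $d(t) := \phi(t+1) - \phi(t)$ satisfy the one-sided bound $d(t+1) - d(t) \leq 2 s_2$, which combined with the periodicity identity $\sum_{t=0}^{n-1} d(t) = 0$ forces $|d(t)| \leq s_2(n-1)$ by a standard cycle argument: if $i^\ast$ is the position of the maximum of $d$, iterating backward gives $d(i^\ast - k) \geq d(i^\ast) - 2 s_2 k$ for $k = 0, \ldots, n-1$, and summing these lower bounds around the cycle yields $n \max d \leq s_2 n (n-1)$, with the symmetric argument giving the lower bound for $\min d$. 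Composing these line Lipschitz bounds along the two basis directions in the decomposition $v - v' = a + b\omega$ gives $|x(v) - x(v')| \leq s_2(n-1)(|a|+|b|) \leq L \|v - v'\|_{\ell_1}$.

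With $L r \leq \eps_0 n^2/2$ for $r := \frac{\sqrt{3}\eps_0 n}{8 s_2}$, every lattice vertex $v$ satisfying $\|v - v_0\|_{\ell_1} \leq r$ has $|x(v)| \geq |x(v_0)| - L r \geq \eps_0 n^2/2$. The number of integer points $(a, b) \in \Z^2$ with $|a|+|b| \leq r$ is at least $r^2$ (the diamond's integer count is $2\lfloor r\rfloor^2 + 2\lfloor r \rfloor + 1$), so $\|x\|_{\ell_p}^p \geq r^2 (\eps_0 n^2 / 2)^p$, and taking $p$-th roots yields the claim. The main technical step I foresee as the hard part is the careful computation of the directional second derivatives $\partial_e^2 q$ via the small linear system of Lemma~\ref{lem:2.3}, together with the bookkeeping of constants in the cycle argument; the slack in $s_2(n-1) \leq L = \frac{4 s_2 n}{\sqrt{3}}$ ensures the lemma's specific constant $\frac{\sqrt{3}}{8 s_2}$ is achievable without pushing the bound to its tightest form.
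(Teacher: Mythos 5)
Your proof is correct and follows the same broad strategy as the paper --- establish that $x$ is $O(n s_2)$-Lipschitz and then count lattice points near the extremizer where $|x|$ stays above $\eps_0 n^2/2$ --- but your mechanism for the Lipschitz bound is genuinely different. The paper locates the global minimizer $v_-$, observes that the rhombus constraints at $v_-$ together with the nonnegativity of all slopes leaving $v_-$ force each incident slope to be at most $s_2$, and then propagates this along rhombus chains (slope changes by at most $s_2$ per rhombus, any triangle reachable in under $4n$ rhombi) to get a Lipschitz constant of $4ns_2$. You instead work along lattice lines, noting that the first differences $d(t) := \phi(t+1)-\phi(t)$ have increments bounded above by $2s_2$ (a sum of two rhombus constraints, or equivalently the $1$D concavity of $x + q$) and sum to zero by $n$-periodicity; the cycle argument then yields $|d(t)| \leq s_2(n-1)$. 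This is a sharper constant, obtained without locating an extremum, and it makes the role of periodicity (the torus $\T_n$ rather than $\L$) explicit, whereas the paper's argument at $v_-$ leans on a slightly delicate local observation. After that, the two proofs count essentially the same quantity, $3\left(\frac{\eps_0 n}{8 s_2}\right)^2$, you via $\ell_1$-diamonds in the $\{1,\omega\}$ basis and the paper via graph balls in the triangular lattice; the constants agree and both implicitly require the radius to be less than $n/2$ so the count is not double-counted on the torus, which is harmless since otherwise the hypothesis $\|x\|_{\ell_\infty}\geq \eps_0 n^2$ is vacuous.
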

\begin{proof}
Let the magnitude of the slope of $x$ on a unit triangle $t $ with vertices $v_i, v_j, v_k$ in $\T_n$ be defined to be $\max(|x(v_i) - x(v_j)|, |x(v_j) - x(v_k)|, |x(v_k) - x(v_i)|)$.
 Choose $v_- \in \T_n$ such that $x(v_-)$ is minimal and $v_+ \in \T_n$ such that $x(v_+)$ is maximal.
 Note that the magnitude  of the slope of a triangle $t$ containing $v_-$ cannot exceed $s_2$ because the discrete hessian of all the rhombi containing $v_-$ are bounded above by $s_2$. It is possible to go from one unit triangle with vertices in $\T_n$ to $v_-$ via a sequence of vertices, every $4$ consecutive vertices of which form a unit rhombus,  such that the total number of rhombi is less than $4n$. For this reason the slope of $x$ at no unit triangle can exceed $4ns_2$ in magnitude. Let $v = v_+$ if $x(v_+) \geq - x(v_-)$ and $v = v_-$ otherwise. Therefore, 
$\|x\|_{\ell_\infty} \geq \eps_0 n^2$ implies that any vertex $\widehat v$  within a lattice distance of 
$\frac{\eps_0 n^2}{8ns_2}$ of $v$ satisfies $\frac{x(\widehat v)}{x(v)} > \frac{1}{2},$ implying that $|x(\widehat{v})| \geq \frac{\eps_0 n^2}{2}.$  The number of vertices within a lattice distance of $\frac{\eps_0 n^2}{8ns_2}$ of $v$ is at least $3\left(\frac{\eps_0 n}{8 s_2}\right)^2$. Therefore, \beq\|x\|_{\ell_p}^p \geq 3\left(\frac{\eps_0 n}{8 s_2}\right)^2\left(\frac{\eps_0 n^2}{2}\right)^p.\eeq This implies the lemma.
\end{proof}
\subsection{Polytopes used in the cover}\lab{sec:cover}
We will map $V(\T_n)$ onto $(\Z/n\Z) \times (\Z/n\Z)$ via the unique $\Z$ module isomorphism that maps $1$ to $(1, 0)$ and $\omega$ to $(0, 1)$. 
Without loss of generality (due linearity under scaling by a positive constant), we will assume in this and succeeding sections that that \beq\lim_{n \ra \infty}|P_n(s)|^{\frac{1}{n^2-1}} = 1.\eeq
Let $\eps_0$ be a fixed positive constant. Suppose  $x \in P_n(s)$ satisfies \beq\|x\|_{\ell_\infty} > \eps_0 n^2.\lab{eq:eps0}\eeq


Given $n_1|n_2$, the natural map from $\Z^2$ to $\Z^2/(n_1 \Z^2) = V(\T_{n_1})$ factors through $\Z^2/(n_2 \Z^2) =V(\T_{n_2})$. We denote the respective resulting maps from $V(\T_{n_2})$ to $V(\T_{n_1}) $ by $\phi_{n_2, n_1}$, from $\Z^2$ to $V(\T_{n_2})$ by $\phi_{0, n_2}$ and from $\Z^2$ to $V(\T_{n_1})$ by $\phi_{0, n_1}$.
Given a set of boundary nodes $\b \subseteq V(\T_n)$, and $ x_\b \in \R^{\b}$, we define $Q_{ \b}(x)$ to be the fiber polytope over $x_\b$, that arises from the projection map $\Pi_{\b}$ of $P_n(s)$ onto $\R^{\b}.$ Note that $Q_{\b}(x)$ implicitly depends on $s$. 

Given positive $\eps_0,  \dots, \eps_k$  we will denote by $\eps_{k+1}$, a positive constant whose value may depend on the preceding $\eps_i$  but not on any $\eps_r$ for $r>k$. 
We will associate with $x$, a polytope $Q_n(\eps_1, s, x)$ containing $x$.   Let $o \in V(\T_n)$ be an offset that we will use to define $\b$. 

The polytope $Q_n(\eps_1, s, x)$ is defined as follows. Let $n_2$ be the largest multiple of $\lfloor \eps_1^{-1} \rfloor + 1$ by an odd number, such that the product is less or equal to $n$. Note that $n_2 + 2\lfloor \eps_1^{-1} \rfloor + 1 \geq n$. Let $$n_1 = \frac{n_2}{\lfloor \eps_1^{-1} \rfloor + 1}.$$ We note that by design, $n_1$ is odd.

We define the set $\b_{1} \subseteq V(\T_{n_1})$ of ``boundary vertices" to be all vertices that are either of the form $(0, y)$ or $(1, y)$ or $(x, 0)$ or $(x, 1)$, where $x, y$ range over all of $\Z/(n_1 \Z)$. We define the set $\b_{2} \subseteq V(\T_{n_2})$ to be $\phi_{n_2, n_1}^{-1}(\b_{1}).$

Let $\rho_0:V(\T_{n_2}) \ra \{0, \dots, n_2-1\}^2\subseteq \Z^2$ be the unique map with this range that satisfies $\phi_{0, n_2} \circ \rho_0 = id$ on $V(\T_{n_2})$. We embed $V(\T_{n_2})$ into $V(\T_{n})$ via  
  $\phi_{0, n} \circ \rho_0,$ and define $$\tilde {\b} := \left(\phi_{0, n_3}\circ \rho_0(\b_2)\right)\cup \left(V(\T_{n})\setminus (\phi_{0, n_3}(\{0, \dots, n_2-1\}^2))\right).$$  In other words, $\tilde{\b}$ is the union of the image of $\b_2$ under $\phi_{0, n_3}\circ \rho_0,$ with the set of vertices that do not belong to the range of $\phi_{0, n_3} (\{0, \dots, n_2-1\}^2)$.
Finally we define $\b$ to be $\tilde{\b} + o$, \ie a translation of $\tilde{\b}$ by the offset $o$.
Given $\b$, define $(x_\b)_{quant}$ to be the closest point to $x_\b$, every coordinate of which is an  integer multiple of $\frac{1}{2n^6}$. 

\begin{defn}\lab{def:6.1} We define the polytope $\tilde{Q}_n(\b, s, x)$ as the preimage of $(x_\b)_{quant} + [-\frac{1}{2n^6}, \frac{1}{2n^6}]^\b$ under the coordinate projection $\Pi_\b$ of $P_n(s)$ onto $\R^\b$. 
\end{defn}
Finally, let $o_{min} = o_{min}(\tilde{b}, s, x)$ be a value of the offset $o$ for which the volume of $\tilde{Q}_n(\tilde{\b}+o, s, x)$ achieves its minimum as $o$ ranges over $V(\T_n)$. We define $$Q_n(\eps_1, s, x) := \tilde{Q}_n(\tilde{\b} + o_{min}, s, x).$$
\begin{lem}\lab{lem:polytope_number}
Let $\eps_1 > 0$. Then,  for sufficiently large $n$, the total number of distinct polytopes $Q_n(\eps_1, s, x)$ as $x$ ranges over all points in $P_n(s)$ is at most $ n^{9(8 \eps_1^{-1})n+2}.$
\end{lem}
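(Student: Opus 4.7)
The polytope $Q_n(\eps_1, s, x) = \tilde{Q}_n(\tilde{\b} + o_{\min}, s, x)$ is completely determined by two pieces of data: the offset $o_{\min} \in V(\T_n)$ and the quantized vector $(x_\b)_{quant}$. Indeed, given these, the cube $(x_\b)_{quant} + [-\tfrac{1}{2n^6}, \tfrac{1}{2n^6}]^\b$ is fixed, and so is its preimage under $\Pi_\b$ in $P_n(s)$ (cf.\ Definition~\ref{def:6.1}). So the total count is at most the number of possible offsets multiplied by the number of possible quantized vectors.

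First, $o_{\min} \in V(\T_n)$ contributes at most $n^2$ choices. To count quantized vectors, I would bound $\|x\|_{\ell_\infty}$ for $x \in P_n(s)$. Since $\nabla^2 x \preccurlyeq s$ with $s_0, s_1, s_2$ bounded constants, the piecewise linear extension of $x$ is $Cn$-Lipschitz --- the argument in Section~\ref{sec:prelim} that bounds $\|f_1\|_{\ell_\infty} \le C n_1^2$ for $\tilde{P}_{n_1}(s)$ adapts to $P_n(s)$, using $\sum_v x(v) = 0$ to locate a vertex of $\T_n$ at which $|x|$ is small and then propagating along a lattice path. This gives $\|x\|_{\ell_\infty} \le C n^2$. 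Each coordinate of $(x_\b)_{quant}$ is then a multiple of $1/(2n^6)$ in an interval of length $\le 2Cn^2 + 1/n^6$, so there are at most $4Cn^8 \le n^9$ admissible values per coordinate for all sufficiently large $n$.

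It remains to bound $|\b| = |\tilde{\b}|$ (the cardinality is invariant under translation by $o_{\min}$). By construction $\tilde{\b}$ is the union of $\phi_{0,n} \circ \rho_0(\b_2)$ and $V(\T_n) \setminus \phi_{0,n}(\{0, \dots, n_2 - 1\}^2)$. Since $|\b_1| \le 4 n_1$, one has $|\b_2| = (n_2/n_1)^2 |\b_1| \le 4(\lfloor \eps_1^{-1} \rfloor + 1) n$, and the inequality $n - n_2 \le 2\lfloor \eps_1^{-1}\rfloor + 1$ (coming from the definition of $n_2$) bounds the complement by $n^2 - n_2^2 \le 2n(2\lfloor\eps_1^{-1}\rfloor+1)$. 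Collecting constants yields $|\tilde{\b}| \le 8 \eps_1^{-1} n$. Combining the three estimates, the number of distinct polytopes is at most
\beq n^2 \cdot (n^9)^{8 \eps_1^{-1} n} \;=\; n^{9 (8 \eps_1^{-1}) n + 2}, \eeq
as claimed.

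The main technical point to watch is step two: the $\ell_\infty$-bound proved earlier in the excerpt is stated for $\tilde{P}_n(s)$ (where $x(0) = 0$), whereas here we work with $P_n(s)$ (where $\sum_v x(v) = 0$). One must first use the centering condition to produce a vertex at which $|x|$ is $O(n^2)$ and only then invoke the Lipschitz propagation. The remaining work --- the counting of quantization levels and the $|\tilde{\b}|$ tally --- is routine bookkeeping.
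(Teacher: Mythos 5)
Your proof follows the same three-part counting as the paper's: $n^2$ offsets times $n^9$ quantization levels per coordinate times $|\tilde{\b}| \le 8\eps_1^{-1}n$ coordinates, and it correctly notes that the polytope is determined by $(o_{\min}, (x_\b)_{quant})$. The additional care you take with the $\ell_\infty$ bound (adapting the $\tilde{P}_n$ argument to the mean-zero normalization of $P_n$) and the explicit tally of $|\tilde{\b}|$ are details the paper leaves implicit, but the approach is the same.
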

\begin{proof}
The number of vertices in $\b$ is bounded above by $8 \eps_1^{-1}n$. Also, $x \in P_n(s)$ implies that $\|x\|_{\ell_\infty} < Cn^2$. The number of distinct points of the form $(x_\b)_{quant}$ can therefore be bounded above by $n^{9(8 \eps_1^{-1})n}$ when $n$ is sufficiently large. Since the number of possible offsets is $n^2$, this places an upper bound of $n^{9(8 \eps_1^{-1})n +2}$ on the number of possible polytopes $Q_n(\eps_1, s, x)$.
\end{proof}

\section{Upper bounds on the volumes of covering polytopes}
In this section, $s$ and $x$ and $\eps_1$ will be fixed, so the dependence of  various parameters on them will be suppressed.
For $1 \leq i, j \leq \frac{n_2 }{n_1}$, and offset $o$, we define the $(i,j)^{th}$ square \beq\square_{ij}^o :=o + \phi_{0, n}\left(\left( \left[\frac{(i-1)n_2}{n_1} + 1, \frac{in_2}{n_1}\right]\times \left[\frac{(j-1)n_2}{n_1} + 1,  \frac{jn_2}{n_1}\right]\right)\cap \Z^2\right).\eeq
We also define 
\beq\square^o :=o + \phi_{0, n}\left(\left( \left[ 1, n_2\right]\times \left[1,  n_2\right]\right)\cap \Z^2\right).\eeq
We note that the boundary vertices of each square 
$\square_{ij}^o$ are contained in $\b$. 
Let $\La_{ij}^o$ denote the orthogonal projection of $\R^{V(\T_n)}$ onto the subspace \beq A_{ij}^o := \left\{y\in \R^{\square_{ij}^o}\big|\sum_{k \in \square_{ij}^o}  y_k = 0\right\}.\eeq For any $z \in \tilde{Q}_n(\tilde{\b} + o) - x,$
the euclidean distance between $z$ and this subspace is less than $Cn^3$ by virtue of the upper bound of $Cn^2$ on the Lipschitz constant of $z$ and $x$. For sufficiently large $n$, we eliminate the $C$ and bound this euclidean distance from above by $n^4$. Therefore, for any fixed $o$, 
\beq n^{- \frac{4n_2^2}{n_1^2}}  \left|(\tilde{Q}_n(\tilde{\b} + o) - x)\right| & \leq &  \left|\prod\limits_{{\substack{i/n_1 \in\Z \cap [1, \frac{n_2}{n_1}]\\j/n_1 \in\Z \cap [1, \frac{n_2}{n_1}]}}} \La_{ij}^o (\tilde{Q}_n(\tilde{\b} + o) - x)\right| \\
& = &  \prod\limits_{{\substack{i/n_1 \in\Z \cap [1, \frac{n_2}{n_1}]\\j/n_1 \in\Z \cap [1, \frac{n_2}{n_1}]}}} \left| \La_{ij}^o (\tilde{Q}_n(\tilde{\b} + o) - x)\right|. \eeq

\subsection{Choice of $\tilde t$.}\lab{ssec:uaub}
Let $\C$ denote the open cone in $\R_+^3$ consisting of points $\tilde{u}=(\tilde{u}_0, \tilde{u}_1, \tilde{u}_2)$ such that  
$$\min_\sigma\left(\tilde{u}_{\sigma(0)} + \tilde{u}_{\sigma(1)}- \tilde{u}_{\sigma(2)}\right) > 0,$$ 
where $\sigma$ ranges over all permutations of $\{0, 1, 2\}$.

\begin{lem}

Suppose $0 < e_0 = e_1 \leq e_2.$ then denoting $(e_0, e_1, e_2)$ by $e$, we have $(\wn_0(e), \wn_1(e), \wn_2(e)) \in \C.$

\end{lem}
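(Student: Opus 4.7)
My plan is to exploit the partial symmetry $e_0 = e_1$ together with the anisotropic isoperimetric inequality (\ref{eq:2.2}) applied against the fully symmetric reference body $P_n(k)$ with $k := (2,2,2)$. First, the lattice automorphism of $\T_n$ that interchanges the edge types $E_0(\T_n)$ and $E_1(\T_n)$ while preserving $E_2(\T_n)$ carries $P_n(e)$ to itself and swaps its type-$0$ and type-$1$ facets, so $w_0^{(n)}(e) = w_1^{(n)}(e)$. Lemma~\ref{lem:ess} guarantees that every $w_i^{(n)}(e)$ is strictly positive, so membership in $\C$ reduces to the single comparison $w_0^{(n)}(e) \ge w_2^{(n)}(e)$.

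Second, set $K := P_n(k)$ and $L := P_n(e)$, both $(n^2-1)$-dimensional polytopes inside the hyperplane $H := \{x : \sum_v x(v) = 0\}$. Since $\nabla^2$ is linear in $x$ and the defining constraints of $P_n(\cdot)$ are linear in $s$, one has the Minkowski inclusions $L + \varepsilon K \subseteq P_n(e+\varepsilon k)$ and $K + \varepsilon L \subseteq P_n(k+\varepsilon e)$ for every $\varepsilon \ge 0$. Taking right derivatives at $\varepsilon = 0$ gives
$$ S_K(L) \le n^2 \sum_i k_i w_i^{(n)}(e) \qquad \text{and} \qquad S_L(K) \le n^2 \sum_i e_i w_i^{(n)}(k). $$
Multiplying the two directions of (\ref{eq:2.2}) yields $S_K(L)\,S_L(K) \ge (n^2-1)^2 |K|\,|L|$. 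Using the full $S_3$-symmetry of $K$ to equate the three $w_i^{(n)}(k)$, combined with the Euler identity $\sum_i k_i w_i^{(n)}(k) = (1 - 1/n^2)|K|$ (from homogeneity of degree $n^2-1$ of $|P_n(\cdot)|$), one extracts the clean inequality
$$ \frac{w_0^{(n)}(e) + w_1^{(n)}(e) + w_2^{(n)}(e)}{3} \;\ge\; \frac{e_0 w_0^{(n)}(e) + e_1 w_1^{(n)}(e) + e_2 w_2^{(n)}(e)}{e_0 + e_1 + e_2}. $$

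Third and finally, substituting $w_0^{(n)}(e) = w_1^{(n)}(e)$ and $e_0 = e_1$ into this inequality and clearing denominators reduces it to $(e_2 - e_0)(w_0^{(n)}(e) - w_2^{(n)}(e)) \ge 0$, so $w_0^{(n)}(e) \ge w_2^{(n)}(e)$ follows from $e_2 \ge e_0$. The three inequalities defining $\C$ then follow: those with $\sigma(2) \in \{0,1\}$ collapse to $w_2^{(n)}(e) > 0$, while the one with $\sigma(2) = 2$ becomes $2 w_0^{(n)}(e) > w_2^{(n)}(e)$, both immediate from what we have. The main obstacle is justifying cleanly the upper bound $S_K(L) \le n^2 \sum_i k_i w_i^{(n)}(e)$: this demands both the Minkowski inclusion and the observation that $K$ and $L$ span the same codimension-$1$ hyperplane $H$, so that the first-order expansion of $|L + \varepsilon K|$ is genuinely $(n^2-1)$-dimensional and is dominated by the first-order expansion of $|P_n(e+\varepsilon k)|$. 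Once this is secured, the remainder of the argument is pure symmetry and algebra.
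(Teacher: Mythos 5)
Your proof is correct and follows essentially the same approach as the paper: apply the anisotropic isoperimetric inequality with reference body $P_n((2,2,2))$, multiply the two directions to get $S_K(L)\,S_L(K)\ge (n^2-1)^2|K||L|$, combine with the Euler identity and the full $S_3$-symmetry of $K$ to obtain $\frac{1}{3}\sum_i w_i^{(n)}(e) \ge \frac{\sum_i e_i w_i^{(n)}(e)}{\sum_i e_i}$, then use $e_0=e_1$ and $w_0^{(n)}(e)=w_1^{(n)}(e)$ to deduce $w_2^{(n)}(e)\le w_0^{(n)}(e)$, and finish with the strict positivity from Lemma~\ref{lem:ess}. You are actually more explicit than the paper about where the upper bounds $S_K(L)\le n^2\sum_i k_i w_i^{(n)}(e)$ come from (the Minkowski inclusion $L+\varepsilon K\subseteq P_n(e+\varepsilon k)$), which is a useful clarification rather than a deviation.
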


\begin{proof}
By the anisotropic isoperimetric inequality (\ref{eq:2.2}), applied to $K = P_n(k)$ and $E = P_n(e)$, 
we have 
\beq S_K(E) S_E(K) \geq (n^2 - 1)^2|K| |E|.\eeq Let $k = (2, 2, 2)$.

Then,
\beqs \left(\frac{(n^2 - 1)|K|(e_0 + e_1 + e_2)}{3}\right) \sum_i \wn_i(e) \geq (n^2 - 1)|K| \sum_i \wn_i(e) e_i.\eeqs

This implies that \beq  \frac{\wn_0(e)+ \wn_1(e) + \wn_2(e)}{3} \geq \frac{ \wn_0(e) e_0 + \wn_1(e) e_1 + \wn_2(e) e_2}{e_0 + e_1 + e_2}.\lab{eq:4.2}\eeq
Observe that, $e_0 = e_1 \leq e_2$ and so by symmetry, $\wn_0(e) = \wn_1(e)$.
Thus, (\ref{eq:4.2}) implies that $\wn_2(e) \leq \wn_1(e) = \wn_0(e).$ Putting this together with Lemma~\ref{lem:ess} shows that $(\wn_0(e), \wn_1(e), \wn_2(e)) \in \C.$
\end{proof}

We will handle the following cases:
\ben
\item[(I)] $s$ is in the closure of the set of all points $\tilde{t}$ such that $f$ is once differentiable at $\tilde{t}$ and  $\nabla f(\tilde{t}) \in  \C$.
\item[(II)] $s = (s_0, s_0, s_2),$ for some $s_0, s_2 \in \R_+,$ where $s_0 \leq s_2$.
\een
Note that by Alexandrov's theorem, (see \cite{Colesanti}) the set of all $s \in \R_+^3$ where $f$ is not twice differentiable is of measure $0$. 
\subsubsection{Case (I)}
Let $f_n(s) = |P_n(s)|^{\frac{1}{n^2 - 1}}$ and $f = \lim_{n\ra \infty} f_n.$ Let $w(\tilde{t}) = \nabla f(\tilde t)$ be the gradient of $f$ at a point $\tilde{t}$, assuming that $f$ is differentiable at $\tilde{t}$. We will call a point  $\tilde t$ a surrogate of $s$ if $f$ is once differentiable at $\tilde{t}$ and the following hold. The dot product between $w(\tilde t)$ and $s$ satisfies  \beq 0 \leq w(\tilde t)\cdot s - 1 \leq \eps_2,\lab{eq:dot}\eeq and \beq f(\tilde{t}) = 1.\lab{eq:7.5new}\eeq 

Let us now rescale $\tilde{u}_{a} := \tilde t$ by multiplying it by a suitable positive scalar $\la$ to get $u_{a} = t$ such that $f_{n_1}(u_{a}) = 1.$ 
\subsubsection{Case (II)}
Suppose that the restriction of $f$ to the convex set $$\bar U= \{(u_0, u_0, u_2)| \R_+ \ni u_2 \geq u_0 \in \R_+\}$$ is differentiable at $\tilde{t}$. Suppose also that 
the dot product between $w(\tilde t)$ and $s$ satisfies  \beq 0 \leq w(\tilde t)\cdot s - 1 \leq \eps_2,\lab{eq:dotb}\eeq and \beq f(\tilde{t}) = 1.\lab{eq:7.5newb}\eeq 
Recalling (\ref{eq:wn}), for any $\hat{n} \in \Z_+$,
let \beqs \frac{1}{\hat{n}^2} \left(\frac{\partial|P_{\hat{n}}(t)|}{\partial t_0}, \frac{\partial|P_{\hat{n}}(t)|}{\partial t_1}, \frac{\partial|P_{\hat{n}}(t)|}{\partial t_2}\right) =: (w_0^{(\hat{n})}(t), w_1^{(\hat{n})}(t), w_2^{(\hat{n})}(t)) = w^{(\hat{n})}(t).\eeqs
By the concavity of the $f_n$ and the pointwise convergence of $f_n$ to $f$, the differentiability of $f$ restricted to $\bar U$ at $\tilde t$ together with the symmetry $\wn_0(\tilde t) = \wn_1(\tilde t)$, we see that 
$\lim_{n \ra \infty} \wn(\tilde t)  = w(\tilde t).$ 

 We now state Minkowski's theorem \cite{Klain} for polytopes  and explain that it is applicable in our context.
\begin{thm}\lab{thm:Mink}
Suppose $\e_1, \e_2, \dots, \e_k$ are unit
vectors that do not all lie in a hyperplane of positive codimension, and suppose that $\a_1, \a_2, \dots, \a_k >0.$ If
$\sum_i \a_i \e_i = 0$
then there exists a polytope $P_n$ having facet unit normals $\e_1, \e_2, \dots, \e_k$ and
corresponding facet areas $\a_1, \dots, \a_k$. This polytope is unique up to translation.
\end{thm}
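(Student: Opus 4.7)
The plan is to prove existence via a variational principle and uniqueness via the Brunn-Minkowski inequality. First I would parametrize the candidate polytopes by their support numbers: for each $h=(h_1,\dots,h_k)\in\R^k$ set
\[
P(h) \;:=\; \bigcap_{i=1}^k \{x\in\R^n : \langle x,\e_i\rangle \le h_i\}.
\]
Because the $\e_i$ span $\R^n$ (their not lying in a hyperplane), $P(h)$ is bounded for every $h$. The two facts I would rely on repeatedly are the volume decomposition
\[
\vol(P(h)) \;=\; \frac{1}{n}\sum_{i=1}^k h_i\,\a_i(P(h))
\]
and its consequence $\partial\vol(P(h))/\partial h_i = \a_i(P(h))$, where $\a_i(P(h))$ is the $(n-1)$-dimensional area of the facet with outer normal $\e_i$, zero if that hyperplane does not actually bound a facet.

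For existence, I would minimize the linear functional $L(h):=\sum_i \a_i h_i$ over the closed convex set $\{h : \vol(P(h)) \ge 1\}$. The hypothesis $\sum_i \a_i\e_i=0$ is exactly what makes $L$ translation invariant: translating $P(h)$ by $v\in\R^n$ sends $h_i\mapsto h_i+\langle\e_i,v\rangle$ and changes $L$ by $\langle\sum_i\a_i\e_i,v\rangle=0$. I can therefore quotient out by the $n$-dimensional translation subspace; on the quotient, a compactness argument (a bounded-volume polytope cannot have all support numbers driven to $+\infty$, so $L$ is coercive along any unbounded feasible ray) together with continuity of the volume produces a minimizer $h^\star$. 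Lagrange duality combined with the derivative formula yields $\a_i = \lambda\,\a_i(P(h^\star))$ for some multiplier $\lambda>0$; homogeneity then lets me rescale $P(h^\star)$ by $\lambda^{1/(n-1)}$, which multiplies every facet area by exactly $\lambda$ and produces a polytope whose facet areas are $\a_i$.

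The main obstacle in this existence step is ruling out degenerate minimizers in which a prescribed normal $\e_i$ fails to bound a facet of positive area. If $\a_i(P(h^\star))=0$ while $\a_i>0$, then decreasing $h_i^\star$ slightly leaves $\vol(P(h^\star))$ unchanged to first order while strictly decreasing $L$, contradicting optimality. So the strict positivity of the $\a_i$, combined with $\sum_i \a_i \e_i = 0$, does double duty: translation invariance for well-posedness, and automatic genuine realization of every prescribed normal as a facet normal.

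For uniqueness I would use the mixed-volume form of Brunn-Minkowski, which is a consequence of \eqref{eq:2.2}. If $P$ and $P'$ both realize the data $(\e_i,\a_i)$, the mixed-volume identity gives $V(P,\dots,P,P')=\tfrac{1}{n}\sum_i h_i(P')\,\a_i(P)=\tfrac{1}{n}\sum_i h_i(P')\,\a_i=V(P')$, and symmetrically $V(P',\dots,P',P)=V(P)$. Combined with Minkowski's first inequality $V(P,\dots,P,P')^n\ge V(P)^{n-1}V(P')$, this forces $V(P)=V(P')$ and places the inequality in its equality case, so $P'=\alpha P+v$ for some $\alpha>0$ and $v\in\R^n$. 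Matching the facet areas gives $\alpha^{n-1}=1$, hence $\alpha=1$ and $P'$ is a translate of $P$.
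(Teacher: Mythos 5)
The paper does not actually prove this theorem: it is Minkowski's classical existence and uniqueness theorem for polytopes, and the paper simply cites it (from Klain, \emph{The Minkowski problem for polytopes}) and then verifies that its hypotheses are satisfied by the facet data of the polytopes $P_n(s)$. Your proposal, by contrast, gives a self-contained proof. It follows the standard variational route --- minimize $\sum_i \a_i h_i$ on $\{\vol(P(h))\ge 1\}$, invoke Brunn--Minkowski for convexity and Lagrange multipliers for the stationarity condition, then get uniqueness from the equality case of Minkowski's first inequality --- which is precisely the method in the cited Klain reference, so in that sense you have reconstructed the intended argument rather than invented a different one.

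The proof is essentially correct. Two small places deserve tightening. First, in the non-degeneracy step: if $\a_i(P(h^\star))=0$, decreasing $h_i^\star$ by $\eps$ does not leave the constraint $\vol\ge 1$ satisfied in general (the volume drops by $o(\eps)$, which may be positive). You should compensate, e.g.\ by uniformly inflating all $h_j$ by $\delta=o(\eps)$ to restore $\vol=1$; the net change in $L$ is $-\a_i\eps+\delta\sum_j\a_j = -\a_i\eps+o(\eps)<0$, giving the contradiction. Alternatively, one can observe that at a boundary minimizer with $\vol(P(h^\star))=1>0$ the gradient $\nabla\vol=(\a_1(P(h^\star)),\dots,\a_k(P(h^\star)))$ is nonzero, so the Lagrange condition $\nabla L=\lambda\nabla\vol$ with $\lambda>0$ already forces every $\a_i(P(h^\star))>0$ because every component of $\nabla L$ is strictly positive. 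Second, the coercivity claim (that $L\to\infty$ along unbounded feasible rays after quotienting translations) is stated a bit loosely; it is true but benefits from a one-line argument that, once the translation freedom is fixed (say by requiring the centroid at the origin), a polytope of volume $\ge 1$ has all $h_i$ bounded below, so $\|h\|\to\infty$ forces some $h_i\to+\infty$ and hence $L\to+\infty$ since all $\a_i>0$.
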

In order to check that these conditions are satisfied by the facet normals of the $P_n(s)$, it suffices to consider the case where $s = 0$ and show that $P_n(0)$ contains no vectorspace of positive dimension. This is indeed the case as we now see. Suppose $x \in P_n(0).$ Then $x$ has mean $0$. The constraints enforce that locally as a function from $V(\T_n)$ to $\R$ its slope is constant. Being a function from the discrete torus to the reals, this slope must be zero.

\subsection{Bounding  $S_{K_{ij}^o}(L_{ij}^o)$ from above}\lab{ssec:appl_isop}
We recall from (\ref{eq:2.2}) that
the anisotropic surface area of $L$ with respect to $K$, denoted $S_K(L)$, satisfies
\beqs S_K(L) \geq m|K|^{\frac{1}{m}} |L|^{\frac{m-1}{m}}.\eeqs 
For $i, j \in n_1 \Z/(n_2\Z)$, let $P_{n_1}^{ij,o}( t)$ be a copy of $L = P_{n_1}( t)$ in $\R^{\square_{ij}^o}$.
Taking $L_{ij}^o$ to be $ P_{n_1}^{ij,o}(t)$  (note that $|P_{n_1}^{ij,o}(t)| = 1$), $K_{ij}^o$ to be $\La_{ij}^o (\tilde{Q}_n(\tilde{\b} + o) -x)$,  and $m = n_1^2 -1$, this gives us 
$$m |K_{ij}^o|^{\frac{1}{m}}  \leq {S_{K_{ij}^o}(L_{ij}^o)}.$$ 


Thus, 
\beqs \prod\limits_{{\substack{i/n_1 \in\Z \cap [1, \frac{n_2}{n_1}]\\j/n_1 \in\Z \cap [1, \frac{n_2}{n_1}]}}} \left| \La_{ij}^o (\tilde{Q}_n(\tilde{\b} + o) - x)\right| & \leq & 
 \prod\limits_{{\substack{i/n_1 \in\Z \cap [1, \frac{n_2}{n_1}]\\j/n_1 \in\Z \cap [1, \frac{n_2}{n_1}]}}} \left(  \frac{S_{K_{ij}^o}(L_{ij}^o)}{m}\right)^m.\eeqs

This implies that 
 \beq \left( n^{- \frac{4n_2^2}{n_1^2}}  \min_{o \in V(\T^n)} \left|(\tilde{Q}_n(\tilde{\b} + o) - x)\right|\right) & \leq & 
\min_{o \in V(\T^n)} \prod\limits_{\substack{i/n_1 \in\Z \cap [1,  \frac{n_2}{n_1}]\\j/n_1 \in\Z \cap [1,  \frac{n_2}{n_1}]}} \left(\frac{S_{K_{ij}^o}(L_{ij}^o)}{m}\right)^m.\nonumber\\\lab{eq:7.9}
\eeq

Recall from Subsection~\ref{sec:prelim} that for $a, b, c$ and $d$ the vertices of a lattice rhombus of side $1$ such that   $ a - d = -z\omega^2,$ $b-a = z,$ $c-b = -z \omega^2 ,$ $d-c = -z,$ for some $z \in \{1, \omega, \omega^2\}.$ In the respective cases when $z= 1, \omega$ or $\omega^2$, we define corresponding sets of lattice rhombi of side $1$ to be $E_0(\mathbb L)$, $E_1(\mathbb L)$ or $E_2(\mathbb L)$. This structure is carried over to $\T_n$ by the map $\phi_{0, n}$ defined in the beginning of Subsection~\ref{sec:cover}.
Recall from the beginning of Subsection~\ref{sec:cover} that we have mapped $V(\T_n)$ on to $(\Z/n\Z) \times (\Z/n\Z)$ by mapping $1$ to $(1, 0)$ and $\omega$ to $(0, 1)$ and extending this map to $V(\T_n)$ via a $\Z$ module homomorphism. In particular, this maps $1 + \omega$ to $(1, 1)$.

Let us examine $S_{K_{ij}^o}(L_{ij}^o)$ for a fixed $i, j$ and $o$. Note that $0 \in K_{ij}^o$. Let us identify $\square_{ij}^o$ with $V(\T_{n_1})$ labelled by $[1, n_1]^2\cap \Z^2$ by mapping the south east corner of $\square_{ij}^o$ onto $(1, 1)$. For $r\in\{0, 1, 2\}$ and $1 \leq k, \ell \leq n_1$, let $u^r_{k\ell}:= u^r_{k\ell}(i, j, o)$ denote the unit outward normal to the  facet of $L_{ij}^o$ that corresponds to the edge in $E_r(\T_{n_1})$, whose south east corner is $(k, \ell)$. 


Consider $h_{k\ell}^r = h_{k\ell}^r(i, j, o)$ to be the maximum value of the functional $\a(a) = \langle a, u^r_{k\ell} \rangle$ as $a$ ranges over $K_{ij}^o$.
We see that \beq S_{K_{ij}^o}(L_{ij}^o) = \sum_{r \in \{0, 1, 2\}} w_r^{(n_1)}(t)\left( \sum_{1 \leq k, \ell \leq n_1} h_{k\ell}^r\right).\lab{eq:7.4}\eeq 

Now, for each $r \in \{0, 1, 2\}$, we define a linear map $D_r$ from $\R^{V(\T_{n'})}$ to $\R^{E_r(\T_{n'})},$ where $n'$ will be a positive integer made clear from context.
Let $f \in \R^{V(\T_{n'})}$ and $(v_1, v_2) \in V(\T_{n'})$. We use $e_r(v_1, v_2)$ to refer to an edge in $E_r(\T_n)$ whose south east corner is the vertex $(v_1, v_2)$. Then,
\ben
\item[(0)] $D_0 f(v_1-1, v_2-1) = \nabla^2 f(e_0(v_1-1, v_2-1)) =  -f(v_1, v_2-1) - f(v_1, v_2) + f(v_1-1, v_2-1) + f(v_1 + 1, v_2).$
\item[(1)] $D_1 f(v_1, v_2) =  \nabla^2 f(e_1(v_1, v_2)) = f(v_1+1, v_2) + f(v_1 , v_2 + 1) - f(v_1, v_2) - f(v_1+1, v_2 + 1).$
\item[(2)] $D_2 f(v_1-1, v_2-1) =  \nabla^2 f(e_2(v_1-1, v_2-1)) = -f(v_1, v_2) - f(v_1-1, v_2) + f(v_1, v_2+1) + f(v_1-1, v_2-1).$
\een
Recall that $K_{ij}^o$ is $\La_{ij}^o (\tilde{Q}_n(\tilde{\b} + o) -x)$. For  linear maps $D_0, D_1$ and $D_2$ described above,  taking $n' = n_1$
we have for $1 \leq k, \ell \leq n_1-2$, and $r \in \{0, 1, 2\}$,
 \beq 0 \leq h_{k\ell}^r = s_r - D_rx(o_1 + i+k, o_2 + j + \ell).\lab{eq:7.5}\eeq
When either $k$ or $\ell$ is one of the numbers $n_1 -1$ or $n_1$, we see that  
$h_{k\ell}^r$ can be larger due to the possibility of the constraints wrapping around. However, it is always true due to the quantization in Definition~\ref{def:6.1}, that
\beq  0 \leq h_{k\ell}^r \leq 2n^{-6} + s_r- D_rx(o_1 + i+k, o_2 + j + \ell).\lab{eq:7.6}\eeq

Let $\Delta = \Delta_t$ be the function from $V(\T_n)$ to $\R$, uniquely specified by the following condition. For any $f:V(\T_n) \ra \R$, and $(v_1, v_2) = v \in V(\T_n)$,
\beq 2(\Delta \ast f)(v) & = & w_0^{(n_1)}(t)(D_0f(v_1 -1, v_2 -1) + D_0f(v_1 -1, v_2))\nonumber\\
& + & w_1^{(n_1)}(t)(D_1f(v_1, v_2 ) + D_1f(v_1 -1, v_2-1))\nonumber\\
& + & w_2^{(n_1)}(t)(D_2f(v_1 -1, v_2 -1) + D_2f(v_1, v_2-1)).\lab{eq:7.7}\eeq
Note that $\Delta$ can be viewed as a self adjoint operator acting on $\mathbb{C}^{V(\T_n)}$ equipped with the standard inner product, but we will find it convenient to define it as a function from $V(\T_n)$ to $\R$ that acts via convolution on complex valued functions defined on $V(\T_n)$.
Let $\Phi$ be the function from $V(\T_n)$ to $\R$, given by \beq \Phi := \frac{\II(\square_{11}^0 - (\frac{n_1 + 1}{2}, \frac{n_1 + 1}{2}))}{m}, \eeq where for a subset $S$ of $V(\T_n)$, $\II(S)$ is the indicator function of $S$, and $\square_{11}^0 - (\frac{n_1 + 1}{2}, \frac{n_1 + 1}{2})$ denotes a centered copy of $\square_{11}^0$ that is symmetric around the origin, whose existence is made possible by the fact that $n_1$ is odd. Again, $\Phi$ can be viewed as a self adjoint operator acting on $\mathbb{C}^{V(\T_n)}$ equipped with the standard inner product.
\begin{lem}\lab{lem:15} For sufficiently large $n$,
\beqs \sum_{o}\sum\limits_{\substack{i/n_1 \in\Z \cap [0, \frac{n_2}{n_1}-1]\\j/n_1 \in\Z \cap [0, \frac{n_2}{n_1}-1]}}\left(\frac{S_{K_{ij}^o}(L_{ij}^o)}{m}\right) & \leq & \frac{n^2 n_2^2}{n_1^2}\left(1 +  {2\eps_2}\right).\eeqs
\end{lem}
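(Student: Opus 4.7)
The plan is to expand each $S_{K_{ij}^o}(L_{ij}^o)$ via (\ref{eq:7.4}), apply the slack estimate (\ref{eq:7.6}), and exploit a telescoping identity for $D_r x$ that arises once one sums over the torus translations parametrized by~$o$. Concretely, combining (\ref{eq:7.4}) with (\ref{eq:7.6}) gives
\begin{equation*}
S_{K_{ij}^o}(L_{ij}^o) \;\leq\; \sum_{r=0}^{2} w_r^{(n_1)}(t)\sum_{1\le k,\ell\le n_1}\!\bigl(2n^{-6}+s_r-D_r x(o_1+i+k,\,o_2+j+\ell)\bigr).
\end{equation*}

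The central combinatorial observation is that for each fixed $(r,i,j,k,\ell)$, as $o$ ranges over $V(\T_n)$ the point $(o_1+i+k, o_2+j+\ell)$ visits each vertex of $V(\T_n)$ exactly once. A direct inspection of the definitions of $D_0$, $D_1$, $D_2$ shows that every vertex appears with matched $+$ and $-$ signs, so $\sum_{v\in V(\T_n)} D_r x(v)=0$ for each $r$. Hence $\sum_o D_r x(o_1+i+k,o_2+j+\ell)=0$. Consequently, for every $(k,\ell)$,
\begin{equation*}
\sum_o \sum_{i,j} h_{k\ell}^r \;\leq\; \bigl(n_2/n_1\bigr)^2 n^2\bigl(s_r+2n^{-6}\bigr),
\end{equation*}
and summing over the $n_1^2$ pairs $(k,\ell)\in[1,n_1]^2$, then dividing by $m=n_1^2-1$, yields
\begin{equation*}
\sum_o \sum_{i,j} \frac{S_{K_{ij}^o}(L_{ij}^o)}{m}\;\leq\;\frac{n^2 n_2^2}{n_1^2-1}\sum_{r} w_r^{(n_1)}(t)\,s_r\;+\;O\!\bigl(n_2^2 n^{-4}\bigr).
\end{equation*}

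The remaining ingredient is to control $\sum_r w_r^{(n_1)}(t)\,s_r$. By the construction of $\tilde t$ in Subsection~\ref{ssec:uaub}, the inequalities (\ref{eq:dot})/(\ref{eq:dotb}) give $w(\tilde t)\cdot s\le 1+\eps_2$ and (\ref{eq:7.5new})/(\ref{eq:7.5newb}) give $f(\tilde t)=1$. Since $t=\la\tilde t$ with $f_{n_1}(t)=1$, homogeneity of $f_{n_1}$ together with the pointwise convergence $f_n\to f$ forces $\la\to 1$. In Case (I), differentiability of $f$ at $\tilde t$ combined with concavity of the $f_n$'s gives $w^{(n_1)}(t)\to w(\tilde t)$ componentwise; in Case (II) the same conclusion follows from the symmetry $w_0^{(n)}(\tilde t)=w_1^{(n)}(\tilde t)$ and differentiability of $f|_{\bar U}$. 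Thus $\sum_r w_r^{(n_1)}(t)\, s_r\to w(\tilde t)\cdot s\le 1+\eps_2$, and absorbing the factor $n_1^2/(n_1^2-1)=1+O(n_1^{-2})$ together with the $O(n_2^2 n^{-4})$ correction into $\eps_2$ produces the advertised bound $(1+2\eps_2)\,n^2 n_2^2/n_1^2$.

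The main obstacle is not the telescoping identity (which is clean once spotted) but the quantitative passage $w^{(n_1)}(t)\to w(\tilde t)$: gradients of pointwise-convergent concave functions need not converge in general, so one has to invoke differentiability of the limit $f$ and the concavity-plus-Lipschitz bounds supplied by Corollary~\ref{cor:lip} and Lemma~\ref{lem:surf_upperbd}. This is the reason Subsection~\ref{ssec:uaub} separates Case (I) from the symmetric Case (II), where the limit is only differentiable along the diagonal slice $\bar U$.
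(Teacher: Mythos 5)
Your proof is correct and takes essentially the same route as the paper: expand $S_{K_{ij}^o}(L_{ij}^o)$ via (\ref{eq:7.4}), bound $h_{k\ell}^r$ by (\ref{eq:7.6}), kill the $D_r x$ contribution by summing over the offset $o$, and control $\sum_r w_r^{(n_1)}(t)\,s_r$ via (\ref{eq:dot}) and the convergence $w^{(n_1)}(t)\to w(\tilde t)$. The paper packages the averaging step through the operator $\Phi\ast\Delta\ast x$ and asserts its vanishing sum by invoking the mean-zero property of $x$, whereas your telescoping justification (torus translation invariance of $\sum_v D_r x(v)$) is the cleaner and more directly correct reason for the same fact; the paper also isolates the exact Euler identity $\sum_r t_r\,w_r^{(n_1)}(t) = m/n_1^2$ before applying (\ref{eq:dot}) to $w(\tilde t)\cdot(s-\tilde t)$, whereas you apply the limit to $\sum_r w_r^{(n_1)}(t)\,s_r$ directly, but these are arithmetically equivalent once the convergence is in hand.
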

\begin{proof}
From (\ref{eq:7.4}), (\ref{eq:7.5}), (\ref{eq:7.6}) and (\ref{eq:7.7}), we observe that \beq \sum_{i, j, o}\left(\frac{S_{K_{ij}^o}(L_{ij}^o)}{m}\right) & = &  \sum_{i, j, o}\sum_{r \in \{0, 1, 2\}} \frac{w_r^{(n_1)}(t)}{m}\left( \sum_{1 \leq k, \ell \leq n_1} h_{k\ell}^r\right)\nonumber\\
& \leq &   \sum_{i, j, o} \frac{2 n^{-6}n_1^2\sum_{r \in \{0, 1, 2\}} w_r^{(n_1)}(t)}{m}\nonumber\\ & + &  \sum_{i, j, o} \frac{\sum_{r \in \{0, 1, 2\}} n_1^2 s_r w_r^{(n_1)}(t)}{m}\nonumber\\ &-&   \sum_{i, j, o} \left(\Phi \ast \Delta \ast x\right)\left(o_1 + i + \frac{n_1 + 1}{2}, o_2 + j + \frac{n_1 + 1}{2}\right).\nonumber\\\lab{eq:le}
\eeq
Note that $$\sum_{i, j, o} \frac{\sum_{r \in \{0, 1, 2\}} n_1^2 t_r w_r^{(n_1)}(t)}{m} =  \frac{n^2 n_2^2}{n_1^2},$$ and that $x$ has mean $0$ and so \beqs  \sum_{i, j, o, k, \ell} \left(\Phi \ast \Delta \ast x\right)(o_1 + i + k, o_2 + j + \ell) = 0.\eeqs
Thus, the expression in (\ref{eq:le}) can be bounded above, for sufficiently large $n$, using volumetric considerations and (\ref{eq:dot}), by 
\beqs n^{-3} +  \frac{n^2 n_2^2}{n_1^2}(1 + \sum_r (w_r^{(n_1)}(t)(s_r - t_r))) & \leq &  n^{-3} \\ & + & \frac{n^2 n_2^2}{n_1^2}(1 + 2\eps_2)\\ & \leq & n^{-3} +  \frac{n^2 n_2^2}{n_1^2}(1 + 3\eps_2).\eeqs
\end{proof}

For a real number $\a$, let  $|\a|_+$ denote $\max(\a, 0)$.
\begin{lem}\lab{lem:7.5} For sufficiently large $n$, 
\beq  \left(\frac{n_1^2}{n^2n_2^2}\right)\sum_{ o}\sum\limits_{\substack{i/n_1 \in\Z \cap [0, \frac{n_2}{n_1}-1]\\j/n_1 \in\Z \cap [0, \frac{n_2}{n_1}-1]}}\Bigg|\left(\frac{S_{K_{ij}^o}(L_{ij}^o)}{m}\right)  -    1  +  \left(\Phi \ast \Delta \ast x\right)\left(o_1 + i + \frac{n_1 + 1}{2}, o_2 + j + \frac{n_1 + 1}{2}\right)\Bigg|_+\nonumber\\\lab{eq:le1}
\eeq is bounded above by $$ (w(\tilde{t}) \cdot (s - \tilde{t})) + \frac{1}{n_1}\left(1 +  3\eps_2\right).$$
\end{lem}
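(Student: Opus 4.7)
The plan is to derive a pointwise upper bound $Y_{ij,o} \leq V_{ij,o}$ for the expression inside $|\cdot|_+$, to decompose $V_{ij,o}$ as a constant plus a zero-mean boundary correction with small $L^1$ norm, and then to control the sum of positive parts via $\sum V^+ = \sum V + \sum V^-$. First I would combine (\ref{eq:7.4}) with the bounds (\ref{eq:7.5})--(\ref{eq:7.6}) for $h^r_{k\ell}$ to obtain
\[\frac{S_{K_{ij}^o}(L_{ij}^o)}{m} \leq \frac{n_1^2 \sum_r s_r w_r^{(n_1)}(t)}{m} - \frac{1}{m} \sum_r w_r^{(n_1)}(t) \sum_{k,\ell=1}^{n_1} D_r x(o_1+i+k,\,o_2+j+\ell) + O(n^{-5}).\]
Writing $V_{ij,o}$ for the right side minus $1$ plus $(\Phi \ast \Delta \ast x)(v_{ij,o})$, where $v_{ij,o}:=(o_1 + i + \tfrac{n_1+1}{2},\, o_2 + j + \tfrac{n_1+1}{2})$, I have $|Y_{ij,o}|_+ \leq V_{ij,o}^+$ pointwise, and $V_{ij,o} = A_{n_1} + E_{ij,o} + \delta_n$ with $A_{n_1} := \tfrac{n_1^2}{m}(s\cdot w^{(n_1)}(t)) - 1$ constant in $(i,j,o)$, $\delta_n = O(n^{-5})$, and $E_{ij,o} := (\Phi \ast \Delta \ast x)(v_{ij,o}) - \tfrac{1}{m}\sum_r w_r^{(n_1)}(t) \sum_{k,\ell} D_r x(\ldots)$ the boundary correction.

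Euler's identity applied to the $m$-homogeneous function $|P_{n_1}|$ together with $|P_{n_1}(t)| = 1$ yields $\sum_r t_r w_r^{(n_1)}(t) = m/n_1^2$, whence $A_{n_1} = \tfrac{n_1^2}{m}(s-t)\cdot w^{(n_1)}(t)$, converging to $w(\tilde t)\cdot(s-\tilde t) \in [0,\eps_2]$ by sub-gradient convergence (exploiting pointwise convergence of $f_n$ to $f$ and differentiability of $f$ at $\tilde t$). The main technical step is the $L^1$ estimate $\sum_{ij,o} |E_{ij,o}| = O(N/n_1)$ where $N := n^2 n_2^2/n_1^2$. Unfolding the convolution, $E_{ij,o} = \sum_r (\nu_r \ast D_r x)(v_{ij,o})$ where each $\nu_r$ is a signed kernel supported on a boundary strip of $O(n_1)$ lattice points with pointwise magnitude $O(w_r^{(n_1)}(t)/m)$, so $\|\nu_r\|_1 = O(w_r^{(n_1)}(t)/n_1)$. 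The crucial input is the bound $\|D_r x\|_1 \leq 2 n^2 s_r$: since $D_r x \leq s_r$ pointwise and $\sum_p D_r x(p) = 0$ by periodicity on the torus, the positive part has $\ell_1$ mass at most $n^2 s_r$, and the negative part matches it. Young's convolution inequality then gives $\sum_v |E(v)| \leq \sum_r \|\nu_r\|_1 \|D_r x\|_1 = O\bigl((\sum_r w_r^{(n_1)}(t) s_r)\cdot n^2/n_1\bigr) = O(n^2/n_1)$ using $\sum_r w_r^{(n_1)}(t) s_r = O(1)$. For fixed $(i,j)$, $\sum_o |E_{ij,o}| = \sum_v |E(v)|$ as $v_{ij,o}$ sweeps bijectively through $V(\T_n)$; multiplying by the $(n_2/n_1)^2$ choices of $(i,j)$ gives $\sum_{ij,o} |E_{ij,o}| = O(n_2^2 n^2/n_1^3) = O(N/n_1)$.

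To conclude, $\sum_{ij,o} E_{ij,o} = 0$ since $\sum_v (\Phi \ast \Delta \ast x)(v) = 0$ (from $\sum_v x(v) = 0$) and $\sum_p D_r x(p) = 0$; hence $\sum V = N A_{n_1} + N \delta_n$. The pointwise subadditivity inequality $V^- \leq (-A_{n_1})^+ + E^- + |\delta_n|$ combined with $\sum E^- = \tfrac12 \sum|E| = O(N/n_1)$ and $(-A_{n_1})^+ = o(1/n_1)$ for sufficiently large $n$ gives $\sum V^- = O(N/n_1)$, and therefore $\sum |Y|_+ \leq \sum V^+ = \sum V + \sum V^- \leq N A_{n_1} + O(N/n_1)$. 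Dividing by $N$ and using $A_{n_1} \leq w(\tilde t)\cdot(s-\tilde t) + o(1)$ yields the stated bound, the $o(1)$ and $O(1/n_1)$ terms being absorbed into $(1+3\eps_2)/n_1$ for sufficiently large $n$. The main obstacle is the $L^1$ estimate on $E$, which rests on the observation that $\|D_r x\|_1 = O(n^2 s_r)$ despite $\|D_r x\|_\infty$ possibly being of order $n^2$; averaging alone in $(i,j,o)$ (as in Lemma~\ref{lem:15}) would only bound the signed sum, not the sum of positive parts.
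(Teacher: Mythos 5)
Your proof is correct and takes a genuinely different route from the paper's. Both arguments start from the same pointwise upper bound on the quantity inside $|\cdot|_+$ that one gets by combining (\ref{eq:7.4}) with (\ref{eq:7.5})--(\ref{eq:7.6}), and both exploit the two facts you highlight: the sign constraints $0 \leq h_{k\ell}^r$, $D_r x \leq s_r$, and the mean-zero identities $\sum_v x(v) = 0$, $\sum_p D_r x(p) = 0$. Where you diverge is in the bookkeeping. The paper keeps the boundary-row quantities $h_{k\ell}^r$ explicit: after the interior cancellation between (\ref{eq:7.11}) and $\Phi\ast\Delta\ast x$, it discards the (nonpositive) top-boundary contributions, retains the bottom-boundary sums of $h_{k\ell}^r$, and then observes that after summing over $(i,j,o)$ these boundary sums are dominated by $n_1^{-1}\sum_{ijo} S_{K_{ij}^o}(L_{ij}^o)/m$, which it controls with Lemma~\ref{lem:15}. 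Your argument instead absorbs all of this into the zero-mean correction $E$ and controls $\sum|E|$ by Young's inequality, using $\|\nu_r\|_1 = O(w_r^{(n_1)}(t)/n_1)$ for the signed boundary kernel and the $L^1$ estimate $\|D_r x\|_1 \leq 2 n^2 s_r$ (which follows from $D_r x \leq s_r$ together with $\sum D_r x = 0$), and then converts $\sum V^+$ into $\sum V + \sum V^-$ with $\sum V = N A_{n_1}$ and $\sum V^- \leq \tfrac12\sum|E| + N(-A_{n_1})^+$. This is cleaner conceptually: it makes explicit that one needs a one-sided $L^1$ cancellation estimate for $D_r x$, not just the averaged signed bound of Lemma~\ref{lem:15}, and your closing remark correctly locates that as the crux. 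One small caveat, shared with the paper's own proof: the last step requires $A_{n_1} = \tfrac{n_1^2}{m}(s-t)\cdot w^{(n_1)}(t)$ to be within $O(1/n_1)$ of $w(\tilde t)\cdot(s-\tilde t)$, i.e.\ a rate for $w^{(n_1)}(t) \to w(\tilde t)$ and $t \to \tilde t$; the paper's own derivation actually terminates with $w^{(n_1)}(t)\cdot(s-t)$ rather than $w(\tilde t)\cdot(s-\tilde t)$, so the stated form of the lemma already involves this (mild, implicit) identification, and your handling is no worse.
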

\begin{proof}
Note that by (\ref{eq:dot}), $$ \frac{\sum_{r \in \{0, 1, 2\}} n_1^2 s w_r^{(n_1)}(t)}{m} \leq 1 + \eps_2.$$
We see that  \beq \left(\frac{S_{K_{ij}^o}(L_{ij}^o)}{m}\right) - 1 = \sum_{r \in \{0, 1, 2\}} \frac{w_r^{(n_1)}(t)}{m}\left( \sum_{1 \leq k, \ell \leq n_1}( - t_r +  h_{k\ell}^r)  \right)\lab{eq:7.11}.\eeq

We see that $ (\Phi \ast \Delta \ast x)(v) $
\beq & = & \sum\limits_{-(n_1 - 1)/2 \leq k, \ell \leq (n_1 - 1)/2}\frac{w_0^{(n_1)}(t)}{2m}\big(D_0x(v_1 + k -1, v_2 +\ell -1)  +    D_0x(v_1 + k -1, v_2  + \ell))\nonumber\\
 & + & \sum\limits_{-(n_1 - 1)/2 \leq k, \ell \leq (n_1 - 1)/2}     \frac{w_1^{(n_1)}(t)}{2m}(D_1x(v_1 + k, v_2 + \ell)  +   D_1x(v_1 + k -1, v_2 + \ell -1))\nonumber\\
 & + &    \sum\limits_{-(n_1 - 1)/2 \leq k, \ell \leq (n_1 - 1)/2} \frac{w_2^{(n_1)}(t)}{2m}(D_2x(v_1 + k -1, v_2 + \ell -1)  +   D_2x(v_1 + k, v_2 + \ell -1)\big)\nonumber.\lab{eq:7.7-1}\eeq
We will examine the above expression term by term when $v = o + (i, j) + ((n_1 + 1)/2, (n_1 + 1)/2).$ 

  \beqs & & \sum\limits_{-(n_1 - 1)/2 \leq k, \ell \leq (n_1 - 1)/2}  \frac{w_0^{(n_1)}(t)}{2m}\big(D_0x(v_1 + k -1, v_2 +\ell -1)  +    D_0x(v_1 + k -1, v_2  + \ell))  \\   
&=&  \sum\limits_{\substack{-(n_1 - 1)/2 \leq k \leq (n_1 - 1)/2\\-(n_1 - 1)/2 \leq \ell \leq (n_1 - 1)/2}}\frac{w_0^{(n_1)}(t)}{m}\big(D_0x(v_1 + k -1, v_2 +\ell -1)\big) \\
& + &  \sum\limits_{\substack{-(n_1 - 1)/2 \leq k \leq (n_1 - 1)/2\\\ell \in \{(n_1 + 1)/2\}}}\frac{w_0^{(n_1)}(t)}{2m}\big(D_0x(v_1 + k -1, v_2 +\ell -1)\big)\\
& - &  \sum\limits_{\substack{-(n_1 - 1)/2 \leq k \leq (n_1 - 1)/2\\\ell \in \{-(n_1 - 1)/2\}}}\frac{w_0^{(n_1)}(t)}{2m}\big(D_0x(v_1 + k -1, v_2 +\ell -1)\big).
\eeqs
The above expression is less or equal to
\beqs 
 &&\sum\limits_{\substack{-(n_1 - 1)/2 \leq k \leq (n_1 - 1)/2\\-(n_1 - 1)/2 \leq \ell \leq (n_1 - 1)/2}}\frac{w_0^{(n_1)}(t)}{m}\big(2 n^{-6} + s_0 - h_{k\ell}^0\big) \\
& + &  \sum\limits_{\substack{-(n_1 - 1)/2 \leq k \leq (n_1 - 1)/2\\\ell \in \{(n_1 + 1)/2\}}}\frac{w_0^{(n_1)}(t)}{2m}\big(2 n^{-6} + s_0 - h_{k\ell}^0\big)\\
& - &  \sum\limits_{\substack{-(n_1 - 1)/2 \leq k \leq (n_1 - 1)/2\\\ell \in \{-(n_1 - 1)/2\}}}\frac{w_0^{(n_1)}(t)}{2m}\big( s_0 - h_{k\ell}^0\big).\eeqs
This equals 
\beqs 
 &&\sum\limits_{\substack{-(n_1 - 1)/2 \leq k \leq (n_1 - 1)/2\\-(n_1 - 1)/2 \leq \ell \leq (n_1 - 1)/2}}\frac{w_0^{(n_1)}(t)}{m}\big(2 n^{-6}  + s_0 - h_{k\ell}^0\big) \\
& + &  \sum\limits_{\substack{-(n_1 - 1)/2 \leq k \leq (n_1 - 1)/2\\\ell \in \{(n_1 + 1)/2\}}}\frac{w_0^{(n_1)}(t)}{2m}\big(2 n^{-6}  - h_{k\ell}^0\big)\\
& - &  \sum\limits_{\substack{-(n_1 - 1)/2 \leq k \leq (n_1 - 1)/2\\\ell \in \{-(n_1 - 1)/2\}}}\frac{w_0^{(n_1)}(t)}{2m}\big( - h_{k\ell}^0\big).\eeqs
Adding this to \beqs \sum_{r \in \{0\}} \frac{w_r^{(n_1)}(t)}{m}\left( \sum_{1 \leq k, \ell \leq n_1}( - t_r +  h_{k\ell}^r)  \right),\eeqs from (\ref{eq:7.11}), we get 
\beqs
 &&\sum\limits_{\substack{-(n_1 - 1)/2 \leq k \leq (n_1 - 1)/2\\-(n_1 - 1)/2 \leq \ell \leq (n_1 - 1)/2}}\frac{w_0^{(n_1)}(t)}{m}\big(2 n^{-6} + s_0 - t_0\big) \\
& + &  \sum\limits_{\substack{-(n_1 - 1)/2 \leq k \leq (n_1 - 1)/2\\\ell \in \{(n_1 + 1)/2\}}}\frac{w_0^{(n_1)}(t)}{2m}\big( 2 n^{-6} - h_{k\ell}^0\big)\\
& - &  \sum\limits_{\substack{-(n_1 - 1)/2 \leq k \leq (n_1 - 1)/2\\\ell \in \{-(n_1 - 1)/2\}}}\frac{w_0^{(n_1)}(t)}{2m}\big(  - h_{k\ell}^0\big).\eeqs
This is bounded above by 
\beq
 &&\sum\limits_{\substack{-(n_1 - 1)/2 \leq k \leq (n_1 - 1)/2\\-(n_1 - 1)/2 \leq \ell \leq (n_1 - 1)/2}}\left(\frac{(2 n^{-6} + s_0 - t_0) w_0^{(n_1)}(t)}{m}\right)\\
& + &  \sum\limits_{\substack{-(n_1 - 1)/2 \leq k \leq (n_1 - 1)/2\\\ell \in \{-(n_1 - 1)/2\}}}\frac{w_0^{(n_1)}(t)}{2m}\big(2n^{-6}  + h_{k\ell}^0\big)\\
& \lesssim &  \sum\limits_{\substack{-(n_1 - 1)/2 \leq k \leq (n_1 - 1)/2\\\ell \in \{-(n_1 - 1)/2\}}}\frac{w_0^{(n_1)}(t)(2n_1(s_0 - t_0) + h_{k\ell}^0)}{2m}.\lab{eq:r0}\eeq
Similar calculations done for $r = 1$ give us the following.
\beqs 
& &\sum\limits_{-(n_1 - 1)/2 \leq k, \ell \leq (n_1 - 1)/2}     \frac{w_1^{(n_1)}(t)}{2m}(D_1x(v_1 + k, v_2 + \ell)  +   D_1x(v_1 + k -1, v_2 + \ell -1))\nonumber\\
& + &\sum_{r \in \{1\}} \frac{w_r^{(n_1)}(t)}{m}\left( \sum_{1 \leq k, \ell \leq n_1}( - t_r +  h_{k\ell}^r)  \right)\nonumber\\
& \leq & \sum\limits_{-(n_1 - 1)/2 \leq k, \ell \leq (n_1 - 1)/2}     \frac{(2n^{-6} + s_1 - t_1)w_1^{(n_1)}(t)}{m}\nonumber\\
& + & \sum\limits_{-(n_1 - 1)/2 \leq \ell \leq (n_1 - 1)/2}     \frac{w_1^{(n_1)}(t)}{2m}(D_1x(v_1 -(n_1 + 1)/2, v_2 + \ell) - D_1x(v_1 + (n_1 - 1)/2, v_2 + \ell))\nonumber\\
& + & \sum\limits_{-(n_1 - 1)/2 \leq k \leq (n_1 - 1)/2}     \frac{w_1^{(n_1)}(t)}{2m}(D_1x(v_1 + k, v_2 -(n_1 + 1)/2) - D_1x(v_1 + k, v_2 + (n_1 - 1)/2))\nonumber.
\eeqs 
this is less or equal to 
\beq
&&\sum\limits_{-(n_1 - 1)/2 \leq k, \ell \leq (n_1 - 1)/2}     \frac{(2n^{-6} + s_1 - t_1) w_1^{(n_1)}(t)}{m}\nonumber\\
& + & \sum\limits_{-(n_1 - 1)/2 \leq \ell \leq (n_1 - 1)/2}     \frac{w_1^{(n_1)}(t)}{2m}(2n^{-6} +  h_{(n_1 - 1)/2\,\ell}^1)\nonumber\\
& + & \sum\limits_{-(n_1 - 1)/2 \leq k \leq (n_1 - 1)/2}     \frac{w_1^{(n_1)}(t)}{2m}(2n^{-6} +  h_{k\,(n_1 - 1)/2}^1)\nonumber\\
& \lesssim &\sum\limits_{-(n_1 - 1)/2 \leq \ell \leq (n_1 - 1)/2}     \frac{w_1^{(n_1)}(t)h_{(n_1 - 1)/2\,\ell}^1}{2m}  \nonumber\\
& + & \sum\limits_{-(n_1 - 1)/2 \leq k \leq (n_1 - 1)/2}     \frac{w_1^{(n_1)}(t) (h_{k\,(n_1 - 1)/2}^1 + 2n_1(s_1 - t_1)) }{2m}\lab{eq:r1}
\eeq
Another calculation for $r = 2$, in which the expressions closely resemble the case of $r = 0$, gives us the following.

 \beqs & & \sum\limits_{-(n_1 - 1)/2 \leq k, \ell \leq (n_1 - 1)/2}  \frac{w_2^{(n_1)}(t)}{2m}\big(D_2x(v_1 + k -1, v_2 +\ell -1)  +    D_2x(v_1 + k, v_2  + \ell-1))  \\   
& + &\sum_{r \in \{2\}} \frac{w_r^{(n_1)}(t)}{m}\left( \sum_{1 \leq k, \ell \leq n_1}( - t_r +  h_{k\ell}^r)  \right)\nonumber\\
&=&  \sum\limits_{\substack{-(n_1 - 1)/2 \leq k \leq (n_1 - 1)/2\\-(n_1 - 1)/2 \leq \ell \leq (n_1 - 1)/2}}\frac{(2 n^{-6} + s_2 - t_2)w_2^{(n_1)}(t)}{m} \\
& + &  \sum\limits_{\substack{-(n_1 - 1)/2 \leq \ell \leq (n_1 - 1)/2\\ k \in \{(n_1 + 1)/2\}}}\frac{w_2^{(n_1)}(t)}{2m}\big(D_2x(v_1 + k -1, v_2 +\ell -1)\big)\\
& - &  \sum\limits_{\substack{-(n_1 - 1)/2 \leq \ell \leq (n_1 - 1)/2\\ k \in \{-(n_1 - 1)/2\}}}\frac{w_2^{(n_1)}(t)}{2m}\big(D_2x(v_1 + k -1, v_2 +\ell -1)\big).
\eeqs
This is less or equal to 
 \beq
&& \sum\limits_{\substack{-(n_1 - 1)/2 \leq k \leq (n_1 - 1)/2\\-(n_1 - 1)/2 \leq \ell \leq (n_1 - 1)/2}}\frac{(2 n^{-6}+ s_2 - t_2)w_2^{(n_1)}(t)}{m} \\
& + &  \sum\limits_{\substack{-(n_1 - 1)/2 \leq \ell \leq (n_1 - 1)/2\\ k \in \{(n_1 + 1)/2\}}}\frac{w_2^{(n_1)}(t)}{2m}\big(2n^{-6} - h_{k\ell}^2\big)\\
& - &  \sum\limits_{\substack{-(n_1 - 1)/2 \leq \ell \leq (n_1 - 1)/2\\ k \in \{-(n_1 - 1)/2\}}}\frac{w_2^{(n_1)}(t)}{2m}\big( - h_{k\ell}^2\big)\\
& \lesssim &  \sum\limits_{\substack{-(n_1 - 1)/2 \leq \ell \leq (n_1 - 1)/2\\ k \in \{-(n_1 - 1)/2\}}}\frac{ w_2^{(n_1)}(t) (h_{k\ell}^2 + 2n_1(s_2 - t_2))}{2m}.\lab{eq:r2}
\eeq
From (\ref{eq:r0}), (\ref{eq:r1}) and (\ref{eq:r2}), it follows that 
\beqs \sum_{i, j, o}\Bigg|\left(\frac{S_{K_{ij}^o}(L_{ij}^o)}{m}\right)  -    1  +  \left(\Phi \ast \Delta \ast x\right)\left(o_1 + i + \frac{n_1 + 1}{2}, o_2 + j + \frac{n_1 + 1}{2}\right)\Bigg|_+\eeqs is bounded above by $$\sum_{i,j,o}\left(w^{(n_1)}(t) \cdot (s - t) + \frac{1}{n_1}\left(\frac{S_{K_{ij}^o}(L_{ij}^o)}{m}\right)\right).$$ Using Lemma~\ref{lem:15}, this is in turn bounded above by $$ (w^{(n_1)}(t) \cdot (s - t)) \left(\frac{n^2n_2^2}{n_1^2}\right) + \frac{n^2 n_2^2}{n_1^3}\left(1 +  3\eps_2\right).$$
\end{proof}
\subsection{A lower bound on $\|\Phi\ast\Delta_{t}\ast x\|_{\ell_2}$ when $\|x\|_{\ell_\infty} > \eps_0 n^2.$}
We will first obtain a lower bound on $\|\Phi \ast x\|_{\ell_\infty}$. Let $v$ be a vertex such that $|x(v)| > \eps_0 n^2$. As we observed in the proof of Lemma~\ref{lem:infty}, it is possible to go from one unit triangle with vertices in $\T_n$ to any other via a sequence of vertices, every $4$ consecutive vertices of which form a unit rhombus,  such that the total number of rhombi is less than $4n$. For this reason the slope of $x$ at no unit triangle can exceed $4nt_2$ in magnitude. 
This implies that a sufficient condition for $(\Phi \ast x)(\hat{v}) > \frac{\eps_0n^2}{2}$ is that $n_1 < \frac{\eps_0 n}{32t_2},$ and that the lattice distance between $v$ and $\hat{v}$ is less than  $\frac{\eps_0 n}{32t_2}.$  This is readily achieved, for example, by setting \beq \eps_1 < \frac{\eps_0}{64t_2}.\eeq This implies that \beq\|\Phi \ast x\|_{\ell_p} > \frac{\eps_0 n^2}{2}\left(\frac{\eps_0 n}{32t_2}\right)^\frac{2}{p}.\eeq
We have thus proved the following.
\begin{lem} Assume that $\eps_1 < \frac{\eps_0}{64 t_2}$. Then, for all $p \in [1, \infty]$, $$\|\Phi \ast x\|_{\ell_p} > \frac{\eps_0 n^2}{2}\left(\frac{\eps_0 n}{32t_2}\right)^\frac{2}{p}.$$\lab{lem:kxlp}
\end{lem}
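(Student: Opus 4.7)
The plan is to establish the $\ell_\infty$ bound first at a single point where $|x|$ is known to be large, then propagate it to an entire lattice ball of comparable radius using a Lipschitz estimate on $x$, and finally upgrade to $\ell_p$ simply by counting vertices in that ball.

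First I would pick $v\in V(\T_n)$ with $|x(v)|>\eps_0 n^2$; after possibly replacing $x$ by $-x$, assume $x(v)>\eps_0 n^2$. Since the discrete hessian of $x$ is bounded in $\ell_\infty$ by $t_2$, the same path-counting argument used inside the proof of Lemma~\ref{lem:infty} shows that on every unit triangle the slope of the piecewise linear extension of $x$ is at most $4n t_2$ in magnitude, and in particular $x$ is $4nt_2$-Lipschitz on $V(\T_n)$ with respect to the lattice distance.

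Next I would estimate $(\Phi\ast x)(\hat v)$ from below for every $\hat v$ at lattice distance at most $\eps_0 n/(32 t_2)$ from $v$. Recall that $\Phi$ equals $1/m$ times the indicator of a centred $n_1\times n_1$ square, and that the hypothesis $\eps_1<\eps_0/(64 t_2)$ combined with $n_1\le \eps_1 n$ forces $n_1<\eps_0 n/(64 t_2)$. By the triangle inequality, every vertex $u$ in the support of $\Phi(\cdot-\hat v)$ lies within lattice distance $n_1+\eps_0 n/(32 t_2)<\eps_0 n/(16 t_2)$ of $v$, so the Lipschitz bound yields $x(u)\ge x(v)-\eps_0 n^2/4\ge 3\eps_0 n^2/4$. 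Since $\Phi$ is non-negative with total mass $n_1^2/m\ge 1$, averaging gives $(\Phi\ast x)(\hat v)>\eps_0 n^2/2$.

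To finish, I would count: the ball of lattice radius $\eps_0 n/(32 t_2)$ around $v$ contains at least $3(\eps_0 n/(32 t_2))^2$ vertices (the triangular-lattice area estimate recorded in Lemma~\ref{lem:infty}), so
\[
\|\Phi\ast x\|_{\ell_p}^{p}\;\ge\;\left(\tfrac{\eps_0 n^2}{2}\right)^{p}\!\cdot\!\left(\tfrac{\eps_0 n}{32 t_2}\right)^{2},
\]
and extracting a $p$-th root gives the stated inequality for $p<\infty$; the case $p=\infty$ follows directly from the pointwise bound. The only delicate point is the bookkeeping in the second paragraph: the double triangle inequality—first from $v$ to $\hat v$ and then from $\hat v$ to a point of the averaging square—must stay below the radius $\eps_0 n/(16 t_2)$ at which the Lipschitz estimate still produces a value at least $3\eps_0 n^2/4$. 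The threshold $\eps_1<\eps_0/(64 t_2)$ is exactly calibrated to make this budget balance.
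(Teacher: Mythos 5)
Your proof is correct and follows the same strategy as the paper's: locate a vertex $v$ where $|x(v)|>\eps_0 n^2$, use the $4nt_2$-Lipschitz bound from Lemma~\ref{lem:infty} to propagate a lower bound $|(\Phi\ast x)(\hat v)|>\eps_0 n^2/2$ to all $\hat v$ within lattice distance $\eps_0 n/(32 t_2)$ of $v$, and then count vertices in that ball to pass from $\ell_\infty$ to $\ell_p$. You are somewhat more explicit than the paper about the double triangle inequality and the role of the bound $n_1\le\eps_1 n$, and you handle the sign of $x(v)$ directly, but these are refinements of the same argument rather than a different route.
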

We will now examine $\|\Phi\ast\Delta_{t}\ast x\|_{\ell_2}$, where $t = u_a$.
The convolution of functions over $(\Z/n\Z) \times (\Z/n\Z)$ is a commutative operation, and so 
\beq\Phi\ast \Delta_t \ast x = \Delta_t \ast \Phi \ast x.\eeq
For $(\i, \j) \in (\Z/n\Z)\times(\Z/n\Z)$, and $\omega = \exp(2\pi\imath/n),$ let $\phi_{\i \j}$ be the character of  $(\Z/n\Z)\times(\Z/n\Z)$ given by $\phi_{\i \j}(i, j) := \omega^{\i i + \j j}.$

Let $ y:= \frac{\Phi \ast x}{\|\Phi \ast x\|_{\ell_2}} $ be decomposed in terms of the characters as 
\beq y = \sum_{\i, \j} \theta_{\i \j}\phi_{\i \j},\eeq where, since $y \in \R^{V(\T_n)}$, we have $\theta_{\i \j} = \bar{\theta}_{-\i\, -\j}.$
Writing
\beq \|\Phi\ast\Delta_{t}\ast x\|^2_{\ell_2} & = &  \left(\frac{\|\Delta_{t}\ast\Phi\ast x\|^2_{\ell_2}}{\|\Phi\ast x\|^2_{\ell_2}}\right)\|\Phi\ast x\|^2_{\ell_2},\\
& = & {\|\Delta_{t}\ast y\|^2_{\ell_2}}\|\Phi\ast x\|^2_{\ell_2}.\lab{eq:7.27}\eeq
Let us now study $\|\Delta_{t}\ast y\|^2_{\ell_2}$ by decomposing $y$ using the characters of $(\Z/n\Z)\times(\Z/n\Z)$. Let $\Delta_t \ast\phi_{\i \j} = \la_{\i\j}\phi_{\i\j}.$  We see that 
\beq \Delta_{t}\ast y =  \sum_{\i, \j} \theta_{\i \j}(\la_{\i\j}\phi_{\i \j})\lab{eq:7.28}.\eeq 
In the interest of brevity, we shall drop the allusion to $t$ for the rest of this subsection.
We shall now compute the $\la_{\i\j}$ explicitly in terms of $w^{(n_1)}(t).$
Recall from (\ref{eq:7.7}) that \beqs 2(\Delta \ast y)(v) & = & w_0^{(n_1)}(t)(D_0y(v_1 -1, v_2 -1) + D_0y(v_1 -1, v_2))\nonumber\\
& + & w_1^{(n_1)}(t)(D_1y(v_1, v_2 ) + D_1y(v_1 -1, v_2-1))\nonumber\\
& + & w_2^{(n_1)}(t)(D_2y(v_1 -1, v_2 -1) + D_2y(v_1, v_2-1)).\eeqs
A rearrangement of this gives us 
\beqs 2(\Delta \ast y)(i, j) & = & (- \wone_0 + \wone_1 + \wone_2) (y(i, j+1) - 2 y(i, j) + y(i, j-1))\\
                                            & + & ( \wone_0 - \wone_1 + \wone_2) (y(i+1, j+1) - 2 y(i, j) + y(i-1, j-1))\\
					& + & (\wone_0 + \wone_1 - \wone_2) (y(i+1, j) - 2 y(i, j) + y(i-1, j)).
\eeqs
Let $$\a := - \wone_0 + \wone_1 + \wone_2,$$ $$\beta := \wone_0 - \wone_1 + \wone_2,$$$$\gamma := \wone_0 + \wone_1 - \wone_2.$$
Substituting  $\phi_{\i\j}$ instead of $y$, we see that 
\beqs 2(\Delta \ast \phi_{\i\j})(i, j) & = & \a\omega^{\i i + \j j} (\omega^{\j} - 2  + \omega^{-\j})\\
                                            & + & \beta \omega^{\i i + \j j} (\omega^{\i+\j} - 2  + \omega^{-\i-\j})\\
					& + & \gamma \omega^{\i i + \j j}(\omega^{\i} - 2  + \omega^{-\i}).
\eeqs
Simplifying further, we obtain 
\beqs \Delta \ast \phi_{\i\j} & = & -2\left(\a \sin^2(\frac{\pi \j}{n}) + \beta \sin^2(\frac{- \pi(\i+ \j)}{n}) + \gamma\sin^2(\frac{\pi \i}{n}) \right)\phi_{k\ell}. \eeqs
Thus, \beq\la_{\i\j} =  -2\left(\a \sin^2(\frac{\pi \j}{n}) + \beta \sin^2(\frac{- \pi(\i+ \j)}{n}) + \gamma\sin^2(\frac{\pi \i}{n}) \right).\lab{eq:master}\eeq

Recall from Subsection~\ref{ssec:uaub} that we let $\C$ denote the open cone in $\R_+^3$ consisting of points $\tilde{u}=(\tilde{u}_0, \tilde{u}_1, \tilde{u}_2)$ such that  
$$\min_\sigma\left(\tilde{u}_{\sigma(0)} + \tilde{u}_{\sigma(1)}- \tilde{u}_{\sigma(2)}\right) > 0,$$ 
where $\sigma$ ranges over all permutations of $\{0, 1, 2\}$. 
\subsubsection{Case (I)}
Let $t = u_a$ and $\tilde{t} = \tilde{u}_a$.
We know from Subsection~\ref{ssec:uaub} that $f(\tilde{t}) = 1$ and that $f$ is differentiable at $\tilde{t}$. We already know that $f$ is concave on $\R_+^3.$ For each finite $n$, $\partial f_n (\tilde{t})$ consists of a single point because we have already shown that $f_n$ is $C^1$ in Lemma~\ref{lem:ess}. Further, we see that the solution to the Minkowski problem of finding a polytope with facets having outer normals corresponding to edges in $E(\T_n)$ and surface measures corresponding to $\wone$ has a unique solution from Theorem~\ref{thm:Mink}, and so $f_n$ is strictly concave.
Next, we see that $\wone_0(\tilde{t}) + \wone_1(\tilde{t}) + \wone_2(\tilde{t})\geq f_{n_1}(\tilde{t}) \geq 0.99$ when $n_1$ is sufficiently large because we know from Subsection~\ref{ssec:uaub} that $f(\tilde{t}) = 1$ and that $f$ is  differentiable at $\tilde{t}$. As a result $\lim_{n\ra \infty} \partial f_n(\tilde{t}) = \partial f(\tilde{t}).$
Now, because $\tilde{t} \in \C$, and we know that for $r \in \{0, 1, 2\}$, $$\wone_r(\tilde{t}) > \left(\frac{\tilde{t}_0}{C\tilde{t}_2}\right)^{\frac{C\tilde{t}_r}{\tilde{t}_0}}.$$ from Lemma~\ref{lem:ess}. The second largest among $\a, \beta, \gamma$ must be at least $\min_r \wone_r(\tilde{t}).$ Since we are considering Case (I), and at least two of $k, \ell $ and $k+ \ell$ must be nonzero modulo $n$ (as $x$ is mean $0$), this implies that for any $\i, \j$, $|\la_{\i\j}|$ is bounded below by $\frac{c \min_r \wone_r(\tilde{t})}{n^2}.$ From (\ref{eq:7.27}), (\ref{eq:master}) and the self-adjointness of $\Delta_t$, we see that 
\beqs \|\Phi\ast\Delta_{t}\ast x\|_{\ell_2}  
 & = &   {\|\Delta_{t}\ast y\|_{\ell_2}}\|\Phi\ast x\|_{\ell_2}\\
& \geq & \left(\frac{\tilde{t}_0}{C\tilde{t}_2}\right)^{\frac{C\tilde{t}_2}{\tilde{t}_0}}\frac{\|\Phi\ast x\|_{\ell_2}}{n^2}\\
& \geq & \left(\frac{\tilde{t}_0}{C\tilde{t}_2}\right)^{\frac{C\tilde{t}_2}{\tilde{t}_0}} \left(\frac{\eps_0^2 n}{C\tilde{t}_2}\right)\\
& \geq & \left(\frac{s_0}{Cs_2}\right)^{\frac{Cs_2}{s_0}} \left(\frac{\eps_0^2 n}{Cs_2}\right).\eeqs

Thus,
\beq \|\Phi\ast\Delta_{u_a}\ast x\|_{\ell_2} \geq \left(\frac{s_0}{Cs_2}\right)^{\frac{Cs_2}{s_0}} \left(\frac{\eps_0^2 n}{Cs_2}\right).\lab{eq:end1}\eeq

\subsubsection{Case (II)}

The argument above applies to Case (II) as well and gives us 
\beq \|\Phi\ast\Delta_{t}\ast x\|_{\ell_2} \geq \left(\frac{s_0}{Cs_2}\right)^{\frac{Cs_2}{s_0}} \left(\frac{\eps_0^2 n}{Cs_2}\right).\lab{eq:end1-sept10-2019}\eeq

\subsection{Upper bound on $|\tilde{Q}_n(\tilde{\b} + o, s, x)|$}
\begin{lem}\lab{lem:polytope_volume}
Let $s$ be as in Case (I) or Case (II), and suppose that $x \in P_n(s)$ satisfies $\|x\|_{\ell_\infty} \geq \eps_0 n^2.$ Let $\eps_1 = \frac{\eps_0}{100s_2}$.  Then, 
the volume of the polytope $Q_n(\eps_1, s, x)$ can be bounded from above as follows.
 \beqs |Q_n(\eps_1, s, x)| \leq   n^{\frac{4n_2^2}{n_1^2}}  \exp( - (n_1^2 - 1) \eps_0^4/24). \eeqs
\end{lem}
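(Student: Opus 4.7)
The plan is to combine the anisotropic isoperimetric bound from (\ref{eq:7.9}) with the $\ell_2$ lower bound on $\|\Phi\ast\Delta_{t}\ast x\|_{\ell_2}$ established in (\ref{eq:end1}) and (\ref{eq:end1-sept10-2019}), converting a variance lower bound on the surface-area ratios $a_{ij}^o:=S_{K_{ij}^o}(L_{ij}^o)/m$ into an upper bound on their product.

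First I would observe that $|Q_n(\eps_1,s,x)|=\min_o|\tilde Q_n(\tilde\b+o,s,x)-x|$ by definition of $o_{min}$ and translation invariance of volume, so (\ref{eq:7.9}) gives
$$|Q_n(\eps_1,s,x)|\leq n^{4n_2^2/n_1^2}\min_o\prod_{i,j}(a_{ij}^o)^m.$$
Passing from the minimum over $o\in V(\T_n)$ to the geometric mean over $o$ yields
$$|Q_n(\eps_1,s,x)|\leq n^{4n_2^2/n_1^2}\Bigl(\prod_{i,j,o}a_{ij}^o\Bigr)^{m/n^2},$$
with a total of $N=n^2(n_2/n_1)^2$ indices. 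It therefore suffices to show $\sum_{i,j,o}\log a_{ij}^o\leq -N\eps_0^4/12$ (the factor $(n_2/n_1)^2\geq 1$ absorbs the loss between $m/n^2$ and $m$).

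Setting $\delta_{ij}^o:=(\Phi\ast\Delta\ast x)(o+(i,j)+(\tfrac{n_1+1}{2},\tfrac{n_1+1}{2}))$ and $\xi_{ij}^o:=(a_{ij}^o-1+\delta_{ij}^o)_+\geq 0$, Lemma~\ref{lem:7.5} bounds $\frac{1}{N}\sum\xi_{ij}^o$ by a quantity $\eta_n$ that is small for $\eps_2$ small and $n$ large (recall that $\tilde t$ is chosen so that $w(\tilde t)\cdot(s-\tilde t)\leq \eps_2$). The inclusion $a_{ij}^o\leq 1-\delta_{ij}^o+\xi_{ij}^o$ together with the elementary inequality $\log(1-y)\leq -y-y^2/2$ (applied on the range where it is valid, with a separate treatment of the exceptional indices described below) yields
$$\sum_{i,j,o}\log a_{ij}^o\;\leq\;-\sum_{i,j,o}\delta_{ij}^o\;-\;\tfrac12\sum_{i,j,o}(\delta_{ij}^o)^2\;+\;O(N\eta_n).$$
The key inputs now are that $\sum_o\delta_{ij}^o=0$ for each fixed $(i,j)$ (because $x$ has mean zero on $\T_n$ and $\Phi,\Delta$ annihilate constants), and the identity
$$\sum_{i,j,o}(\delta_{ij}^o)^2=(n_2/n_1)^2\,\|\Phi\ast\Delta\ast x\|_{\ell_2}^2\;\geq\;c(s)\,N\,\eps_0^4$$
coming from (\ref{eq:end1}) or (\ref{eq:end1-sept10-2019}). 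Absorbing $N\eta_n$ into the main term by taking $\eps_2$ sufficiently small and $n_1$ sufficiently large delivers the required estimate on $\prod a_{ij}^o$, and hence the claimed bound on $|Q_n(\eps_1,s,x)|$.

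The hard part will be justifying the Taylor-type inequality uniformly, since a priori there is no pointwise bound on $|\delta_{ij}^o|$. I would handle this by partitioning the index set according to whether $|\delta_{ij}^o|$ lies below or above a fixed threshold (say $1/2$). On the ``good'' set the quadratic Taylor inequality applies directly. On the ``bad'' set the constraint $a_{ij}^o\geq 0$ forces $\xi_{ij}^o\geq |\delta_{ij}^o|-1$, so Lemma~\ref{lem:7.5} controls both the counting measure and the total $\delta^2$-contribution of these bad indices, and their contribution to $\sum\log a_{ij}^o$ is at most $O(N\eta_n)$, which is absorbed into the main term. A secondary bookkeeping issue is the positivity requirement $a_{ij}^o>0$ in the logarithm, which follows from the fact that each $K_{ij}^o$ has positive $(n_1^2-1)$-dimensional volume (the fiber polytopes over the boundary values contain a small cube by the $s_0=2$ assumption, so their surface areas are strictly positive).
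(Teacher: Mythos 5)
Your overall architecture matches the paper's: start from the product bound coming out of the anisotropic isoperimetric inequality (\ref{eq:7.9}), split each factor as $a_{ij}^o \approx 1 - \delta_{ij}^o + \xi_{ij}^o$, use the zero-mean and $\ell_2$-norm facts about $\Phi\ast\Delta\ast x$ to win a quadratic amount, and control $\sum\xi$ via Lemma~\ref{lem:7.5}. Replacing the paper's ``pick a good offset $\bar o$ by Markov'' step with a geometric mean over all $o\in V(\T_n)$ is a legitimate and in some ways cleaner variation. The identities $\sum_o\delta_{ij}^o=0$ and $\sum_{i,j,o}(\delta_{ij}^o)^2 = (n_2/n_1)^2\|\Phi\ast\Delta\ast x\|_{\ell_2}^2$ that you invoke are both correct.

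The gap is in the Taylor-type inequality and its consequences. You use $\log(1-y)\le -y-\tfrac12 y^2$ with $y=\delta_{ij}^o-\xi_{ij}^o$, but that inequality is false for $y<0$ (equivalently $a_{ij}^o>1$): for $a=1-y$ with $a>1$, $\log a = \log(1+|y|)$ grows only logarithmically while $-y-y^2/2 = |y|-|y|^2/2$ eventually becomes negative, so the inequality reverses, and not just marginally. The threshold-on-$|\delta_{ij}^o|$ fix you sketch does not cure this, because the failure already occurs for moderate negative $y$ (try $y=-1/2$) and because $\xi_{ij}^o$ is not pointwise small. The paper's version is $\ln a \le a-1 - \frac{(a-1)^2}{2\max(a,1)}$, and the $\max(a,1)$ in the denominator is essential: it is what makes the inequality hold uniformly for all $a>0$. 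This correction is not cosmetic. Equations (\ref{eq:8.1})--(\ref{eq:8.3}) give $\max(a_{ij}^o,1)\lesssim 6/\eps_1^2$, so the quadratic gain per index is $\frac{(\delta-\xi)^2}{2\max(a,1)} \gtrsim \frac{\eps_1^2}{12}(\delta-\xi)^2$, not $\frac12(\delta-\xi)^2$. That is an attenuation by a factor of order $\eps_1^2 = (n_1/n_2)^2$, and once it is accounted for, your target intermediate bound $\sum_{i,j,o}\log a_{ij}^o\le -N\eps_0^4/12$ is simply not achievable; what one actually gets is something of the order $-N\eps_1^2\eps_0^4/C\approx -n^2\eps_0^4/C$, which after raising to the power $m/n^2$ produces $\exp(-m\eps_0^4/C')$ --- exactly the shape of the stated conclusion, but with the $n_1^2$ (not $n_2^2$) in the exponent that the $\max(a,1)$ factor forces. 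You should therefore (i) replace the Taylor inequality by the paper's $\max$-corrected version, (ii) supply a uniform upper bound on $\max(a_{ij}^o,1)$ over the indices you keep (your geometric-mean route requires this over all $o$, not just a chosen $\bar o$, so you must also bound the triples where $\xi_{ij}^o$ is large using Lemma~\ref{lem:7.5}), and (iii) propagate the resulting $\eps_1^2$ loss through the final computation; your stated sufficient condition then needs to be revised to the weaker $\sum_{i,j,o}\log a_{ij}^o\lesssim -n^2\eps_0^4$.
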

\begin{proof}

We need a suitable bound from below  on \beqs \inf_o  \sum_{\substack{i/n_1 \in\Z \cap [0, \frac{n_2}{n_1}-1]\\j/n_1 \in\Z \cap [0, \frac{n_2}{n_1}-1]}}\left(\Phi \ast \Delta \ast x\right)\left({o}_1 + i + \frac{n_1 + 1}{2}, {o}_2 + j + \frac{n_1 + 1}{2}\right).\eeqs
Such a bound can be obtained by first observing that \beqs \sum_{v \in \square^o} \left(\Delta \ast x\right)\left(v_1 , v_2\right) =  (-1) \sum_{v \in V(\T_n) \setminus \square^o} \left(\Delta \ast x\right)\left(v_1, v_2\right).\eeqs

Recall from (\ref{eq:7.7}) that  for any $f:V(\T_n) \ra \R$, and $(v_1, v_2) = v \in V(\T_n)$,
\beqs 2(\Delta \ast f)(v) & = & w_0^{(n_1)}(t)(D_0f(v_1 -1, v_2 -1) + D_0f(v_1 -1, v_2))\nonumber\\
& + & w_1^{(n_1)}(t)(D_1f(v_1, v_2 ) + D_1f(v_1 -1, v_2-1))\nonumber\\
& + & w_2^{(n_1)}(t)(D_2f(v_1 -1, v_2 -1) + D_2f(v_1, v_2-1)).\eeqs
This expression on the right, is less or equal to $2.02$.

This implies that 
\beqs (-1.01) \sum_{v \in V(\T_n) \setminus \square^o} \left(\Delta \ast x\right)\left(v_1 , v_2\right) & > &  (-1.01)|V(\T_n) \setminus \square^o|\\
& > & (-1.01 n)(4\eps_1^{-1} + 2).\eeqs
Thus, we have proved the following.
\beq \inf_o  \sum\limits_{\substack{i/n_1 \in\Z \cap [0, \frac{n_2}{n_1}-1]\\j/n_1 \in\Z \cap [0, \frac{n_2}{n_1}-1]}} \left(\Phi \ast \Delta \ast x\right)\left({o}_1 + i + \frac{n_1 + 1}{2}, {o}_2 + j + \frac{n_1 + 1}{2}\right) &  >  & -1.01\left(\frac{n}{m}\right)(4\eps_1^{-1} + 2)\nonumber\\
& > & -1.01 \left(\frac{C}{n}\right) \eps_1^{-3}.\lab{eq:8.1}\eeq

Similarly, from (\ref{eq:7.7}) we observe that for all $o$, 
\beq   \inf_{\substack{i/n_1 \in\Z \cap [0, \frac{n_2}{n_1}-1]\\j/n_1 \in\Z \cap [0, \frac{n_2}{n_1}-1]}}\left(\Phi \ast \Delta \ast x\right)\left({o}_1 +i + \frac{n_1 + 1}{2}, {o}_2 + j + \frac{n_1 + 1}{2}\right) &\geq& \left(\frac{-1.01}{m}\right)|V(\T_n)|,\nonumber\\
& \geq & \left(\frac{- 5}{\eps_1^2}\right),\lab{eq:8.2}\eeq
when $n_1$ is sufficiently large.

Recall from Lemma~\ref{lem:7.5} that
for sufficiently large $n$, 
\beqs  \left(\frac{n_1^2}{n^2n_2^2}\right)\sum_{i, j, o}\Bigg|\left(\frac{S_{K_{ij}^o}(L_{ij}^o)}{m}\right)  -    1  +  \left(\Phi \ast \Delta \ast x\right)\left(o_1 + i + \frac{n_1 + 1}{2}, o_2 + j + \frac{n_1 + 1}{2}\right)\Bigg|_+\nonumber
\eeqs is bounded above by $ (w(\tilde{t}) \cdot (s - \tilde{t})) + \frac{1}{n_1}\left(1 +  3\eps_2\right).$ This is in turn less or equal to $ \eps_2 + \frac{1}{n_1}\left(1 +  3\eps_2\right) < 2\eps_2.$

Pick a $\bar{o}$ uniformly at random. By Lemma~\ref{lem:7.5}, and Markov's inequality, with probability greater or equal to $  1/2$, 
\beqs  \left(\frac{n_1^2}{n_2^2}\right)\sum_{i, j}\Bigg|\left(\frac{S_{K_{ij}^{\bar{o}}}(L_{ij}^{\bar{o}})}{m}\right)  -    1  +  \left(\Phi \ast \Delta \ast x\right)\left(\bar{o}_1 + i + \frac{n_1 + 1}{2}, \bar{o}_2 + j + \frac{n_1 + 1}{2}\right)\Bigg|_+
\eeqs
\beqs < {4\eps_2}.\eeqs
Let \beqs\eta(i, j) := \Bigg|\left(\frac{S_{K_{ij}^{\bar{o}}}(L_{ij}^{\bar{o}})}{m}\right)  -    1  +  \left(\Phi \ast \Delta \ast x\right)\left(\bar{o}_1 + i + \frac{n_1 + 1}{2}, \bar{o}_2 + j + \frac{n_1 + 1}{2}\right)\Bigg|_+.\eeqs
Thus, with probability at least $\frac{1}{2}$,
\beq  \sum_{i, j}\eta(i, j)  < \frac{16\eps_2}{\eps_1^2}.\nonumber\\ \lab{eq:8.3}\eeq

Recall from (\ref{eq:end1}) that 
\beqs \|\Phi\ast\Delta_{u_a}\ast x\|_{\ell_2} \geq \left(\frac{s_0}{Cs_2}\right)^{\frac{Cs_2}{s_0}} \left(\frac{\eps_0^2 n}{Cs_2}\right).\eeqs
Therefore, for $\De$ that equals  $\De_{u_a}$ 
Markov's inequality informs us that with probability greater or equal to $ \frac{2}{3}$, 
\beq  \sum\limits_{\substack{i/n_1 \in\Z \cap [0, \frac{n_2}{n_1}-1]\\j/n_1 \in\Z \cap [0, \frac{n_2}{n_1}-1]}} \left(\Phi \ast \Delta \ast x\right)\left({\bar o}_1 + i + \frac{n_1 + 1}{2}, {\bar o}_2 + j + \frac{n_1 + 1}{2}\right)^2  
& > & \eps_0^4  \eps_1^{-2}.\nonumber \\ \lab{eq:8.4}\eeq
Let \beq \zeta(i, j) := (-1)\left(\Phi \ast \Delta \ast x\right)\left({\bar o}_1 + i + \frac{n_1 + 1}{2}, {\bar o}_2 + j + \frac{n_1 + 1}{2}\right)+ \eta(i, j).\nonumber\\ \lab{eq:defzeta}\eeq
Recall from (\ref{eq:7.9}) that 

 \beqs \left( n^{- \frac{4n_2^2}{n_1^2}}  \min_{o \in V(\T^n)} \left|(\tilde{Q}_n(\tilde{\b} + o) - x)\right|\right) & \leq & 
\min_{o \in V(\T^n)} \prod\limits_{\substack{i/n_1 \in\Z \cap [1,  \frac{n_2}{n_1}]\\j/n_1 \in\Z \cap [1,  \frac{n_2}{n_1}]}} \left(\frac{S_{K_{ij}^o}(L_{ij}^o)}{m}\right)^m.\nonumber\\
\eeqs
We observe that the right hand side above can be bounded above as follows. \beqs 
\min_{o \in V(\T^n)} \prod\limits_{\substack{i/n_1 \in\Z \cap [1,  \frac{n_2}{n_1}]\\j/n_1 \in\Z \cap [1,  \frac{n_2}{n_1}]}} \left(\frac{S_{K_{ij}^o}(L_{ij}^o)}{m}\right)^m
& \leq & \prod\limits_{\substack{i/n_1 \in\Z \cap [1,  \frac{n_2}{n_1}]\\j/n_1 \in\Z \cap [1,  \frac{n_2}{n_1}]}} \left( 1 + \zeta(i, j)\right)^m.
\eeqs
For our purposes, it therefore suffices to get a good upper bound on \beq \prod\limits_{\substack{i/n_1 \in\Z \cap [1,  \frac{n_2}{n_1}]\\j/n_1 \in\Z \cap [1,  \frac{n_2}{n_1}]}} \left( 1 + \zeta(i, j)\right)^m.
\eeq

By the inequality \beq \ln a \leq a-1 - \frac{(a-1)^2}{2\max(a, 1)}\eeq for $a \in (0, \infty),$ we see that 
\beq \ln \left( 1 + \zeta(i, j)\right) \leq \zeta(i, j) - \frac{\zeta(i, j)^2}{2\max(\zeta(i, j), 1)}.\eeq 
By (\ref{eq:8.2}) and (\ref{eq:8.3}), it follows that \beq \max(\zeta(i, j), 1) \leq \frac{5 + 16\eps_2}{\eps_1^2} < \frac{6}{\eps_1^2}.\eeq

Now further using (\ref{eq:8.1}), (\ref{eq:8.3}) and (\ref{eq:8.4}) we have 
\beq \sum\limits_{\substack{i/n_1 \in\Z \cap [1,  \frac{n_2}{n_1}]\\j/n_1 \in\Z \cap [1,  \frac{n_2}{n_1}]}}  \ln \left( 1 + \zeta(i, j)\right) \leq  1.01 \left(\frac{C}{n}\right) \eps_1^{-3} + \frac{16\eps_2}{\eps_1^2} - \frac{\eps_0^4 /\eps_1^2}{\frac{12}{\eps_1^2}}.\eeq
When, $n \ra \infty$, we have $\eps_1 < \frac{\eps_0}{64 t_2}$ (thus fulfiling  the requirements of Lemma~\ref{lem:kxlp}) and suppose that $\eps_2$ to tend to $0$  in such a way, that for all sufficiently large $n$,
\beq - 1.01 \left(\frac{C}{n}\right) \eps_1^{-3} -  \frac{16\eps_2}{\eps_1^2} + \frac{\eps_0^4/\eps_1^2}{\frac{12}{\eps_1^2}} & \geq &  -  \frac{20\eps_2}{\eps_1^2} + \eps_0^4/12\\
& \geq & \eps_0^4/24.\eeq

This gives us 
\beq \prod\limits_{\substack{i/n_1 \in\Z \cap [1,  \frac{n_2}{n_1}]\\j/n_1 \in\Z \cap [1,  \frac{n_2}{n_1}]}}   \left( 1 + \zeta(i, j)\right)^m \leq \exp( - m \eps_0^4/24). \eeq

It now follows from (\ref{eq:7.9}) that 
 \beq |Q_n(\eps_1, s, x)| & = & \min_{o \in V(\T^n)} \left|(\tilde{Q}_n(\tilde{\b} + o) - x)\right| \\
& \leq &  n^{\frac{4n_2^2}{n_1^2}}  \exp( - m \eps_0^4/24). \lab{eq:poly_end}\eeq
\end{proof}
\section{Existence of a scaling limit}
Recall from Subsection~\ref{sec:prelim} that  given  $s = (s_0, s_1, s_2)\in \R_+^3,$  we define $P_n(s)$ to be the bounded polytope of  all functions $x:V(\T_n) \ra \R$ such that $\sum_{v \in V(\T_n)} x(v) = 0$ and $\nabla^2(x)\preccurlyeq s$. 

\begin{lem}\lab{lem:diameter}
The $\ell_\infty$ diameter of $P_n(s)$ is greater than $(s_1 + s_2)\lfloor n/2\rfloor^2/4$ for all $n$ greater than $1$.
\end{lem}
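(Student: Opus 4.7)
The plan is to exhibit two explicit points of $P_n(s)$ whose $\ell_\infty$ difference is at least $\tfrac{(s_1+s_2)\,m(n-m)}{2}$, where $m := \lfloor n/2\rfloor$; since $n-m \ge m$, this exceeds the desired threshold $\tfrac{(s_1+s_2)m^2}{4}$. Using the identification $V(\T_n) \cong (\Z/n\Z)^2$ from Subsection~\ref{sec:cover} (sending $1\mapsto(1,0)$ and $\omega\mapsto(0,1)$), I would introduce two one-dimensional ``parabolas'' on the cycle $\Z/n\Z$,
\[
q(j) := -\tfrac{s_2}{2}\,j(n-j), \qquad r(k) := -\tfrac{s_1}{2}\,k(n-k), \qquad j,k\in\{0,1,\ldots,n-1\},
\]
extended $n$-periodically. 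A short direct computation shows that the cyclic second difference of $q$ equals $s_2$ at each $j\in\{1,\ldots,n-1\}$ and equals $-s_2(n-1)$ at $j=0$; in particular $q''\le s_2$ on all of $\Z/n\Z$, and similarly $r''\le s_1$. The very negative wraparound value at $j=0$ is harmless because $\nabla^2 f\preccurlyeq s$ is a one-sided upper bound.

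I would then set $f(i,j) := q(j) + r(i-j)$ and verify $\nabla^2 f\preccurlyeq s$ by substituting into the explicit formulas for $D_0,D_1,D_2$ from Subsection~\ref{sec:cover}. The mixed terms between $q(j)$ and $r(i-j)$ cancel in each formula, leaving
\[
D_0 f\equiv 0\le s_0, \qquad D_1 f(v_1,v_2)=r''(v_1-v_2)\le s_1, \qquad D_2 f(v_1-1,v_2-1)=q''(v_2)\le s_2,
\]
so $f-\bar f\in P_n(s)$. Let $T$ denote the translation of $V(\T_n)$ by $(2m,m)\bmod n$, acting on functions by $(Tf)(v):=f(v-(2m,m))$. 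Since $\nabla^2$ and the mean are translation-invariant, $Tf-\bar f$ also lies in $P_n(s)$.

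Evaluating at the origin,
\[
(f-Tf)(0,0) \;=\; \bigl[q(0)-q(-m)\bigr] + \bigl[r(0)-r(-m)\bigr] \;=\; \frac{(s_1+s_2)\,m(n-m)}{2} \;\ge\; \frac{(s_1+s_2)\,m^2}{2},
\]
using $q(-m)=q(n-m)=-\tfrac{s_2}{2}\,m(n-m)$, the analogous formula for $r$, and $n-m\ge m$. Since this is the value of $(f-\bar f)-(Tf-\bar f)$ at a single coordinate, the $\ell_\infty$ diameter of $P_n(s)$ is at least $\tfrac{(s_1+s_2)m^2}{2}$, which is strictly greater than $\tfrac{(s_1+s_2)m^2}{4}$. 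The main obstacle is verifying the cancellation of cross terms in the three hessian formulas for $f=q(j)+r(i-j)$; once that is established, the remaining calculations are elementary arithmetic.
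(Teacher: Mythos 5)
Your proof is correct, and it takes a genuinely different route from the paper. The paper invokes Lemma~\ref{lem:2.3} to obtain the two-dimensional quadratic $q$ with prescribed constant discrete hessians $(-s_0,-s_1,-s_2)$, forms the piecewise-linear interpolant $r$ of $q\big|_{n\mathbb L}$, and observes that the $n\mathbb L$-periodic function $r-q$, once mean-centered, lies in $P_n(s)$; the lower bound on $\|r-q+\kappa\|_{\ell_\infty}$ is then read off from the sag of $q$ below the chord along one side of a fundamental triangle of $n\mathbb L$, and the second witness point is the zero function. Your argument instead builds a \emph{separable} function $f(i,j)=q(j)+r(i-j)$ from two one-dimensional cyclic parabolas, so that the discrete hessians decouple: $D_0 f\equiv 0$, $D_1 f$ reduces to the cyclic second difference of $r$, and $D_2 f$ to that of $q$, each of which is easily bounded by the relevant $s_i$ (with the single very negative wrap-around value being harmless because $\preccurlyeq s$ is one-sided). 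You then compare $f$ with a translate $Tf$ rather than with the zero function. Both proofs are short and elementary; yours has the advantage of being completely self-contained (it does not rely on Lemma~\ref{lem:2.3} or on the assertion about ``one of the sides of a fundamental triangle,'' which the paper leaves to the reader), and the hessian verification is a transparent one-line cancellation. I verified the second-difference computations (value $s_2$ at $j\in\{1,\dots,n-1\}$, value $-s_2(n-1)$ at $j=0$, and analogously for $r$), the cancellation of cross terms in $D_0,D_1,D_2$, and the evaluation $(f-Tf)(0,0)=\tfrac{(s_1+s_2)m(n-m)}{2}$ with $m=\lfloor n/2\rfloor$; since $n-m\ge m$ and $m\ge 1$ for $n>1$, this strictly exceeds the claimed $\tfrac{(s_1+s_2)m^2}{4}$.
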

\begin{proof}
Recall from Lemma~\ref{lem:2.3} that there is a unique quadratic function $q$ from $\mathbb L$ to $\R$ such that $\nabla^2q$ satisfies the following.

\ben 
\item  $\nabla^2q(e) = - s_0,$ if $e \in E_0(\mathbb L)$.
\item $\nabla^2q(e)  =  - s_1,$  if $e \in E_1(\mathbb L )$.
\item $\nabla^2q(e)  =  - s_2,$ if  $e \in E_2(\mathbb L)$.
\item $q(0) = q(n) = q(n\omega) = 0$.
\een
We define the function $r$ from $\R^2$ to $\R$ to be the unique function that agrees with $q$ on $n \mathbb L$, but is defined at all points of $\R^2\setminus n{\mathbb L}$ by piecewise linear extension.
In other words, the epigraph of $-r$ is the convex hull of all points of the form $(v, -q(v))$ as $v$ ranges over $n \mathbb L$. 
The function $r - q$ restricted to $\mathbb L$ is invariant under shifts by elements in $n \mathbb L$ and so  can be viewed as a function from $V(\T_n)$ to $\R$. The function from $V(\T_n)$ to $\R$ obtained by adding a suitable constant $\kappa$ to $r - q$ such that it has zero mean is a member of $P_n(s)$. We readily see, by examining one of the sides of a fundamental triangle in $n\mathbb L$ that $\|r- q + \kappa\|_{\ell_\infty}$ is at least $(s_1 + s_2)\lfloor n/2\rfloor^2/4$. Since the constant function taking value $0$ belongs to $P_n(s)$, the lemma follows.
\end{proof}
Recall from Subsection~\ref{ssec:uaub} that we denote by $\C$,  the open cone in $\R_+^3$ consisting of points $\tilde{u}=(\tilde{u}_0, \tilde{u}_1, \tilde{u}_2)$ such that  
$$\min_\sigma\left(\tilde{u}_{\sigma(0)} + \tilde{u}_{\sigma(1)}- \tilde{u}_{\sigma(2)}\right) > 0,$$ 
where $\sigma$ ranges over all permutations of $\{0, 1, 2\}$. 
\begin{thm}\lab{thm:main}
Let $s = (s_0, s_1, s_2) \in \R_+^3$ where $s_0 \leq s_1 \leq s_2$ and either of the following two conditions is true.
\ben
\item[(I)] $s$ is in the closure of the set of all points $\tilde{t}$ such that $f = \lim_{n \ra \infty} |P_n(s)|^{\frac{1}{n^2 - 1}}$ is once differentiable at $\tilde{t}$ and  $\nabla f(\tilde{t}) \in  \C$.
\item[(II)] $s = (s_0, s_0, s_2),$ for some $s_0, s_2 \in \R_+,$ where $s_0 \leq s_2$.
\een
For $\eps_0 > 0$, let $p_n(s, \eps_0)$ denote the probability that a point sampled from the normalized Lebesgue measure on $P_n(s)$ has  $\ell_\infty$ norm greater than $\eps_0n^2$.
Then,
\beqs \lim_{n\ra \infty} p_n(s, \eps_0)  = 0.\eeqs
\end{thm}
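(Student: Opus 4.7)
The plan is a union bound over the covering polytopes constructed in Subsection~\ref{sec:cover}, using Lemma~\ref{lem:polytope_number} to count them and Lemma~\ref{lem:polytope_volume} to bound each one. First, I would reduce to the normalization $\lim_{n\to\infty}|P_n(s)|^{1/(n^2-1)}=1$ by scaling $s$ by a positive constant; the map $x\mapsto \lambda x$ is a volume-rescaling bijection $P_n(s)\to P_n(\lambda s)$ that preserves the event $\|x\|_{\ell_\infty}>\varepsilon_0 n^2$ after rescaling $\varepsilon_0$ accordingly, so this is without loss of generality. Fix $\varepsilon_1:=\varepsilon_0/(100 s_2)$, the value required by Lemma~\ref{lem:polytope_volume}, and note that $\varepsilon_1$ is a positive constant independent of $n$. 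For every $x\in P_n(s)$ with $\|x\|_{\ell_\infty}>\varepsilon_0 n^2$, the polytope $Q_n(\varepsilon_1,s,x)$ contains $x$, so the set of such $x$ is covered by the finite family $\mathcal F$ consisting of the distinct polytopes $Q_n(\varepsilon_1,s,x)$.

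Next I would apply the two key lemmas. By Lemma~\ref{lem:polytope_number}, $|\mathcal F|\le n^{9(8\varepsilon_1^{-1})n+2}$, which is $\exp(O_{\varepsilon_0,s_2}(n\log n))$. Since $s$ satisfies Case (I) or Case (II) by hypothesis, Lemma~\ref{lem:polytope_volume} applies to every $x$ in the bad set and gives
\begin{equation*}
|Q_n(\varepsilon_1,s,x)|\le n^{4n_2^2/n_1^2}\exp\!\bigl(-(n_1^2-1)\varepsilon_0^4/24\bigr).
\end{equation*}
With $n_1=n_2/(\lfloor\varepsilon_1^{-1}\rfloor+1)$ and $n_2\ge n-2(\lfloor\varepsilon_1^{-1}\rfloor+1)-1$, we have $n_1=\Theta_{\varepsilon_0,s_2}(n)$ and $n_2^2/n_1^2=O_{\varepsilon_0,s_2}(1)$. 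Hence each polytope has volume at most $\exp(O_{\varepsilon_0,s_2}(\log n)-c(\varepsilon_0,s_2) n^2)$ for a positive constant $c(\varepsilon_0,s_2)$.

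Multiplying, the total Lebesgue measure of the bad set is bounded by
\begin{equation*}
|\mathcal F|\cdot\max_{x}|Q_n(\varepsilon_1,s,x)|\le \exp\!\bigl(O_{\varepsilon_0,s_2}(n\log n)-c(\varepsilon_0,s_2)n^2\bigr).
\end{equation*}
On the other hand, by the normalization and the fact that $|P_n(s)|^{1/(n^2-1)}\to 1$, for any $\delta>0$ one has $|P_n(s)|\ge \exp(-\delta n^2)$ for all sufficiently large $n$. Choosing $\delta=c(\varepsilon_0,s_2)/3$ and dividing, we conclude
\begin{equation*}
p_n(s,\varepsilon_0)\le \exp\!\bigl(O_{\varepsilon_0,s_2}(n\log n)-\tfrac{2}{3}c(\varepsilon_0,s_2)n^2\bigr)\xrightarrow[n\to\infty]{}0.
\end{equation*}

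The main obstacle was already overcome inside Lemma~\ref{lem:polytope_volume}: the case analysis (I)/(II) is exactly what guarantees, via the anisotropic isoperimetric inequality applied block by block and the Fourier computation yielding $|\lambda_{kl}|\gtrsim \min_r w_r^{(n_1)}(\tilde t)/n^2$, that the expected surface area contribution from a random offset beats the baseline by a factor of order $\exp(-c n^2)$ whenever $\|x\|_{\ell_\infty}>\varepsilon_0 n^2$. The remaining work here is the entropy-style accounting above: making sure that the number of covering polytopes grows only sub-exponentially in $n^2$ while the volume of each decays genuinely exponentially in $n^2$, so that the union bound closes.
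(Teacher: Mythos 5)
Your proposal is correct and follows essentially the same route as the paper: cover the bad set by the finitely many polytopes $Q_n(\eps_1,s,x)$, count them with Lemma~\ref{lem:polytope_number}, bound each one with Lemma~\ref{lem:polytope_volume}, and take a union bound. You make explicit one small step the paper leaves implicit, namely that under the standing normalization $\lim_n |P_n(s)|^{1/(n^2-1)}=1$ one has $|P_n(s)|\ge\exp(-\delta n^2)$ for every $\delta>0$ and all large $n$, so that dividing the union-bound volume estimate by $|P_n(s)|$ still leaves a quantity decaying like $\exp(-c n^2)$; that is a correct and useful clarification.
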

\begin{proof} 
Recall from Lemma~\ref{lem:polytope_number} that for sufficiently large $n$, the total number of distinct polytopes $Q_n(\eps_1, s, x)$ as $x$ ranges over all points in $P_n(s)$ is at most $ n^{9(8 \eps_1^{-1})n+2}.$ Together with Lemma~\ref{lem:polytope_volume}, this implies that the volume of the set of all points $x$ in the polytope $P_n(s)$  whose $\ell_\infty$ norm is at least $\eps_0n^2$ is less than 
$$ n^{9(8 \eps_1^{-1})n+2}  n^{\frac{4n_2^2}{n_1^2}}  \exp( - (n_1^2 - 1) \eps_0^4/24) |P_n(s)|. $$ The multiplicative factor $  n^{9(8 \eps_1^{-1})n+2}  n^{\frac{4n_2^2}{n_1^2}}  \exp( - (n_1^2 - 1) \eps_0^4/24)$ tends to $0$ for any fixed $\eps_0 > 0$ as $n \ra \infty$, completing the proof.
\end{proof}

\section{Acknowledgements}
I am grateful to Scott Sheffield for several helpful discussions on the topic of this paper as well as on random surfaces in general. I thank Bo'az Klartag for a helpful discussion on the stability of the Brunn-Minkowski inequality. I was partly supported by NSF
grant DMS-1620102 and a Ramanujan Fellowship.

\end{document}